\newenvironment{remark}[1][Remark]{\begin{trivlist}
\item[\hskip \labelsep {\bfseries #1}]}{\end{trivlist}}
\newtheorem{theorem}{Theorem}[section]
\newtheorem{lemma}[theorem]{Lemma}
\newtheorem{proposition}[theorem]{Proposition}
\newtheorem{corollary}[theorem]{Corollary}
\newtheorem{definition}[theorem]{Definition}
\newtheorem{conjecture}[theorem]{Conjecture}
\newcommand{\C}{\mathbb C}
\newcommand{\R}{\mathbb{R}}
\newcommand{\N}{\mathbb{N}}
\newcommand{\W}{\mathcal{W}}
\newcommand{\Szw}{\mathcal{S}^2}
\newcommand{\abs}[1]{\lvert#1\rvert}
\newcommand{\norm}[1]{\ensuremath{\left\|#1\right\|}}
\newcommand{\Sp}{\mathcal{S}}
\newcommand{\Ind}{\operatorname{Ind}_{\mathcal{W}}}
\newcommand{\Span}{\operatorname{span}}
\newcommand{\rank}{\operatorname{rank}}
\begin{document}

\title{On the Index of Willmore spheres}

\author{Jonas Hirsch\thanks{Universit\"at Leipzig, Fakult\"at f\"ur Mathematik und Informatik, Augustusplatz 10, 04109 Leipzig, hirsch@math.uni-leipzig.de}, Elena M\"ader-Baumdicker\thanks{Technische Universit\"at Darmstadt, Fachbereich Mathematik, Schlo\ss gartenstr.\ 7, 64289 Darmstadt, maeder-baumdicker@mathematik.tu-darmstadt.de}
\date{\vspace{-4ex}}}

\maketitle

\begin{abstract}
We consider unbranched Willmore surfaces in the Euclidean space that arise as inverted complete minimal surfaces with embedded planar ends. Several statements are proven about upper and lower bounds on the Morse Index -- the number of linearly independent variational directions that locally decrease the Willmore energy. We in particular compute the Index of a Willmore sphere in the three-space. This Index is $m-d$, where $m$ is the number of ends of the corresponding complete minimal surface and $d$ is the dimension of the span of the normals at the $m$-fold point. The dimension $d$ is either two or three. For $m=4$ we prove that $d=3$. In general, we show that there is a strong connection of the Morse Index to the number of logarithmically growing Jacobi fields on the corresponding minimal surface.

\end{abstract}

\section{Introduction}
For a closed, $2$-dimensional manifold $\Sigma$ and a smooth immersion $f:\Sigma \to (N,h)$ into a Riemannian manifold $(N,h)$, $\dim(N)\geq 3$, we define 
\begin{align*}
 \W(f) : = \frac{1}{4} \int_\Sigma |\vec H|^2 d\mu_g,
\end{align*}
where $g: = f^\# h$ is the induced metric on $\Sigma$ and  $\vec H = \sum_{i,j=1}^2 g^{ij} A_{ij}$ is the mean curvature vector, which is the trace of the second fundamental form $A_{ij} = (D_i \partial_j f)^\perp$ in local coordinates ($D_i$ is the covariant derivative along $f$). In Euclidean three-space $(N,h) = (\R^3, \delta_{\R^3})$ the mean curvature vector can be written as $\vec H = (\kappa_1 + \kappa_2) n$, where $\kappa_1, \kappa_2$ are the principle curvatures and $n$ is a unit normal at the given point (with the appropriate orientation). \\[-0.2cm]

In the 1920's the school of Blaschke \cite{Blaschke, Thomsen} showed that for surfaces into the Euclidean space $\W$ is invariant under scaling, rigid motions and under inversions at spheres (such as $ x\mapsto \frac{x}{|x|^2}$), where the center of the inversion is not on the surface $f(\Sigma)$.

Willmore rediscovered that functional in the 1960's \cite{Willmore}
and many authors use his name for it: the \emph{Willmore functional}. For more information about topics related to the Willmore functional see for example \cite{KuwertSch, MarquesNeves, Riviere2014} and the references therein.\\[-0.2cm]

Using the conformal invariance of the Willmore functional we get the following fact: Take a complete minimal surface $M^2$ in $\R^n$, $n\geq 3$, and choose a point $x_0\not \in M$. By inverting $M$ at the sphere $\partial B_1(x_0)$ we get a compact surface in $\R^n$ with one point removed. This surface satisfies the \emph{Willmore equation} (the Euler Lagrange equation for $\W$) except at the center of the inversion. If the immersion can be extended smoothly across that ``singularity'' then the surface is a smooth Willmore immersion\footnote{This is not always the case as Kuwert and Sch\"atzle showed in \cite{Kuwert2004}: An inverted catenoid cannot be extended across the singularity. There is also other examples: The inverted Enneper surface is a closed branched Willmore surfaces, i.e.\ the parametrization is smooth across the point, but it has a branch point of order three.}. Bryant proved in \cite[p.~47]{BryantDuality} that minimal surfaces in $\R^3$ with embedded planar ends (i.e.\ each end looks like a single plane) are - after inversion at a sphere - smooth Willmore surfaces, thus the singularity can be removed in that case. Furthermore, all critical points of the Willmore energy of the type of a sphere in $\R^3$ arise as inversions of minimal spheres and their Willmore energy is $4\pi m$ for an $m\in \N$. This result is not true for surfaces of higher genus: Every stereographic projection of the Clifford torus ($\mathcal{S}^1({\frac{1}{\sqrt{2}}}) \times \mathcal{S}^1 ({\frac{1}{\sqrt{2}}}) \subset \mathcal{S}^3$) is a smooth Willmore surface - it is in fact the minimizer of $\W$ among competitors of genus $g\geq 1$ in $\R^3$ \cite{MarquesNevesWillmore} - but it is not an inverted minimal surface in $\R^3$.\\[-0.2cm]

Coming back to the result of Bryant \cite{BryantDuality} it is interesting that not all numbers $m\in \N$ can be realized as $\W(f_{\mathcal{S}^2}) = 4\pi m$ for a smooth closed Willmore immersion $f:\mathcal{S}^2\to \R^3$. There are no Willmore spheres for $m=2,3,5,7$ \cite{Bryant}. But there are Willmore spheres with $\W(f_{\mathcal{S}^2}) =   8\pi l$ for all $l\in \N\setminus\{1\}$ \cite{Bryant}. In the recent work \cite{Michelat9}, Michelat exludes also the case $m=9$. He conjectures that the number of ends must always be even. \\
It follows from Bryant's classification that after the round sphere (which has $\W = 4\pi$) the next possible energy level is $16\pi$ - which corresponds to $4$ embedded planar ends of the immersed minimal surface before the inversion. In \cite[Section~5]{Bryant}, Bryant analyzes how many Willmore spheres (up to conformal transformation of the ambient space) with $16\pi$ energy exist. It turns out that there is a family of four real parameters of them. The moduli space of these surfaces is also studied in the preprint \cite{KusnerSchmitt} by Kusner and Schmitt. Although Bryant has studied all of the four-ends spheres in \cite[Section~5]{Bryant}, there are at least two examples that appear in another context: The Rosenberg-Toubiana surface \cite{RosenbergToubiana} and the Morin surface \cite{Morin78}. An explicit parametrization of the minimal surface that gives rise to the Morin surface after inversion was found by Kusner \cite{Kusner}. The Morin surface has a $4$-fold orientation reversing symmetry. This means (after an appropriate rotation in $\R^3$) one can rotate the surface around the $x_3$-axis by $\frac{\pi}{2}$ and obtains the same surface back, but with the opposite orientation. \\[-0.2cm]

In this article we study the \emph{Morse Index} of closed Willmore immersions in $\R^n$ that are inverted minimal surfaces with embedded planar ends. The Morse Index (or Willmore Morse Index) is the number of linearly independent directions of a Willmore surface that locally produce a negative second variation of $\W$. Obviously, minimizers for a fixed genus (as the Clifford torus among surfaces in $\R^3$ with positive genus \cite{MarquesNevesWillmore}) or minimizers in regular homotopy classes \cite{WeinerHomotopy, we} have Morse Index zero, there is no variational direction that decreases their Willmore energy. Two immersions $f,g:\Sigma \to\R^3$ are regulary homotopic if there is a homotopy of immersions $f:\Sigma\times [0,1] \to\R^3$ such that $f(\cdot,0)=f$,$f(\cdot, 1)=g$ and which induces a homotopy of the tangent bundles. As far as we know it is not known that one of the examples of Willmore surfaces that appear in the the literature has positive Morse Index. But it was conjectured in \cite[Section~3]{Francis95} that the Morin surface has Morse Index one. It is one of the results of this article to prove this conjecture. Furthermore, as the Willmore spheres in $\R^3$ get more and more complicated the more ends the corresponding surfaces have it is natural to believe that the Morse Index increases when the number of ends increases. This is in analogy of the theory of the Morse Index of the Area functional of minimal surfaces such as in  \cite{Otis}.\\[-0.2cm]

Before explaining our main results we would like to mention that the Morin surface is an important surface for the theory of the \emph{Sphere Eversion}. The fact that a standard round sphere can be turned inside out is called Sphere Eversion. More precisely, the round sphere in $\R^3$ with a given orientation is regularly homotopic to the round sphere with the opposite orientation. The remarkable statement of the existence of a Sphere Eversion was proven by Smale \cite{Smale}. Since then, many mathematicians contributed to this field \cite{Morin78, FrancisMorin, Apery, Morin78b, Morin78c, BanchoffMax, Riviere2018, Riviere2017, Michelat2018}. We do not want to go into details at this point but we would like to mention that the Morin surface is used as a  ``half-way model'' of the Sphere Eversion. The work of \cite{Francis95, Francis97} suggests that an \emph{optimal Sphere Eversion} can be constructed by the Morin surface. It is optimal in the sense that at every time of the deformation of the sphere the immersion has the least possible Willmore energy (thus bending energy). Having Morse Index at least one is a necessary condition for the Morin surface to be the half-way model for an optimal sphere eversion.\\[-0.2cm]

Our main results are the following:
\begin{theorem} \label{thm:Indexspheres}
 Let $\Psi:\Szw \to\R^3$ be an unbranched Willmore sphere. Then there is an $m\geq 1$ with $ \W(\Psi) = 4\pi m$ \cite{BryantDuality}. If $m>1$, then the Morse Index is
 \begin{align*}
  \Ind\left(\Psi\right) = m -\dim\operatorname{span}\{n_\Psi(p_i): i =1,...,m\}.
 \end{align*}
 \end{theorem}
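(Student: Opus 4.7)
The plan is to exploit the conformal invariance of $\W$ to reduce the Morse index computation for $\Psi$ to a Jacobi-field analysis on the underlying complete minimal surface $M\subset\R^3$ with $m$ embedded planar ends (granted by Bryant's classification). Write $\Psi = I\circ f$, with $f\colon M\to\R^3$ the minimal immersion and $I$ an inversion chosen so that the $m$ ends of $M$ map to the preimages $p_1,\ldots,p_m\in\Szw$ of the $m$-fold point, and set $n_i := n_\Psi(p_i)$. To first order, a smooth normal variation $u\,n_\Psi$ of $\Psi$ corresponds to a variation $v\,n_M$ of $M$ with $v \sim u\,\abs{z}^2$ at the ends; in particular $v$ is generically unbounded, with quadratic leading behaviour governed by the boundary values $u(p_i)$.

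The next step is to isolate the ambient-isometry contribution: each translation $a\in\R^3$ gives the bounded Jacobi field $v_a = a\cdot n_M$ on $M$, which transfers under $I$ to a Jacobi field of the Willmore operator $L_\Psi$ on $\Szw$ with evaluation vector $\bigl(v_a(p_1),\ldots,v_a(p_m)\bigr) = (a\cdot n_1,\ldots,a\cdot n_m)$. This spans a $d$-dimensional subspace of $\R^m$ under the evaluation map $u\mapsto(u(p_1),\ldots,u(p_m))$, and the complementary $m-d$ dimensions in $\R^m$ will be shown to parametrise the negative directions of the second variation.

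For the lower bound $\Ind(\Psi)\ge m-d$ I would, for each end datum $(u_1,\ldots,u_m)\in\R^m$ outside the translation image, solve $\mathcal{J}_M v = 0$ on $M$ with prescribed logarithmic coefficients at the ends matched to $(u_i)$, and transfer $v$ back through $I$ to an admissible $H^2$ test function $u$ on $\Szw$. A careful integration by parts then shows that the Willmore second variation $\delta^2\W_\Psi(u)$ evaluates to a finite boundary residue at the ends of $M$ which is a symmetric bilinear form in the log-coefficients (equivalently, in $(u_i)$), and this form is negative definite on the quotient of $\R^m$ by the translation image. For the matching upper bound $\Ind(\Psi)\le m-d$, I would decompose an arbitrary smooth $u$ as an ambient conformal variation (automatically zero second variation) plus a residual piece corresponding to a compactly supported $v$ on $M$, on which the second variation reduces to the manifestly nonnegative quantity $\tfrac{1}{2}\int_M(\mathcal{J}_M v)^2\,d\mu_M$ obtained from the standard formula for $\delta^2\W$ at a minimal surface.

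The main technical obstacle is the boundary analysis. Because the inversion turns smooth variations of $\Psi$ into unbounded perturbations of $M$, the identity $\delta^2\W_\Psi(u) = \tfrac{1}{2}\int_M(\mathcal{J}_M v)^2\,d\mu_M$ is only formal: the right-hand side diverges generically, and the finite second variation on $\Psi$ equals a regularised version of this integral in which the divergent bulk terms cancel against boundary contributions extracted at each end. Making the regularisation precise, identifying the resulting boundary residue with an explicit symmetric bilinear form on $\R^m$, and verifying that this form descends to an $(m-d)$-dimensional negative-definite quotient is the heart of the argument; it is at this step that the logarithmically growing Jacobi fields on $M$ announced in the abstract play their central role.
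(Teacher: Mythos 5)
Your architecture is recognisably the paper's: translations of the ambient space give the $d$-dimensional contribution through the evaluation map at the $p_i$, and logarithmically growing Jacobi fields on the minimal surface, paired against the $\abs{X}^2$-type leading behaviour under inversion, produce the $m-d$ negative directions via a regularised second-variation formula with boundary residues at the ends. However, the central step is asserted rather than proved. You say you would ``solve $\mathcal{J}_M v=0$ with prescribed logarithmic coefficients at the ends matched to $(u_i)$'' for any datum outside the translation image. The achievable log-coefficient vectors are in fact exactly $\ker A$ where $A\alpha=\sum_i\alpha_i\,n_X(p_i)$, i.e.\ the orthogonal complement of the translation image, and establishing that \emph{all} of $\ker A$ (an $(m-d)$-dimensional space) is realised by genuine Jacobi fields is the heart of the theorem. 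The paper does this by identifying the obstruction space $K_1=\hat L(E_1)^\perp$ with $\{a\cdot n_X\colon a\in\R^3\}$, which requires the Montiel--Ros correspondence $u\mapsto Y(u)$ sending bounded Jacobi fields to branched minimal immersions with Gauss map $n_X$, together with a Riemann--Hurwitz ramification count that works only because $\Sigma$ is a sphere. Your proposal never uses the hypothesis $\Sigma=\Szw$, yet that is precisely where it enters; without this solvability statement the lower bound does not get off the ground.

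Two further points. First, your upper bound is broken as stated: the evaluation image of the translations is only $d$-dimensional, so you cannot decompose an \emph{arbitrary} smooth $u$ into a conformal Jacobi field plus a piece vanishing at all $m$ points -- only at $d$ of them -- which is exactly why the bound is $m-d$ and not $0$; moreover the residual $v=u\abs{X}^2$ with $u(p_i)=0$ is not compactly supported on $M$ (it still grows like $\abs{z}^{-1}$ at an end). The paper avoids the second-variation formula here altogether, using the Li--Yau inequality: any variation fixing the $m$-fold point keeps the energy above $4\pi m$, so the second variation is nonnegative on a subspace of codimension $m-d$ after subtracting translation Jacobi fields. Second, the boundary residue is not a negative-definite form in the log-coefficients alone; it is the off-diagonal pairing $8\pi\sum_i v_i\beta_i$ between the leading coefficients $v_i$ and the log-coefficients $\beta_i$, plus a positive quadratic term in the $v_i$. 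Extracting $m-d$ genuine negative directions from this hyperbolic structure requires an additional construction (the paper builds an auxiliary function $\chi$ with leading signs $\sigma_i\in\{\pm1\}$ chosen so that $\vec\sigma\cdot\mathsf{L}(u)\neq0$, and runs a linear-in-$t$ argument against an orthogonal decomposition relative to the negative eigenspace). These are fixable, but as written the proposal has a genuine gap at the existence of the log-growing Jacobi fields and an incorrect reduction in the upper bound.
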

 
 For $m=1$, $\Psi$ is the round sphere which has Index zero. For arbitrary $m>1$ we will show 
 \begin{align*}
  2 \leq d:= \dim\operatorname{span}\{n_\Psi(p_i): i =1,...,m\} \leq 3,
 \end{align*}
see Proposition~\ref{prop:allcodim2}. For $m=4$ we will show
\begin{proposition} \label{prop:dimthree}
 Let $\Psi:\Szw \to\R^3$ be a smooth Willmore sphere which arises as the inversion of a complete minimal sphere with four embedded planar ends then the normals at the ends $\{n_\Psi(p_i): i= 1,...,4\}$ span the whole $\R^3$, i.e.\ $d=3$.
\end{proposition}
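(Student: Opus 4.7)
The approach is a proof by contradiction. Suppose $d=2$; after rotating $\R^3$, the span of the four normals becomes the horizontal plane $\{x_3=0\}$, so every end of the underlying minimal sphere $M\cong S^2\setminus\{q_1,\dots,q_4\}$ is asymptotic to a vertical plane. Conformal invariance of $\W$ together with the identity $|\mathring A|^2=-2K$ on a minimal surface gives
\[
\W(\Psi)\;=\;4\pi\;+\;\tfrac{1}{2}\!\int_{M}|\mathring A|^2\,dA_M\;=\;4\pi\;+\;4\pi\,\deg(g),
\]
and since $\W(\Psi)=16\pi$ we get $\deg(g)=3$ for the Weierstrass Gauss map $g\colon S^2\to S^2$. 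Horizontality reads $|g(q_i)|=1$ for each $i$.

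The key local step I would establish is that an embedded planar end at $q_i$ with horizontal normal forces $q_i$ to be a critical point of $g$. More precisely, after rotating the ambient frame around the vertical axis so that $g(q_i)=1$ (hence the out-of-plane direction is $e_1$ while $e_2,e_3$ are in-plane), write $g(z)=1+g'(q_i)\,z+O(z^2)$ and $\omega=A_i z^{-2}+B_i z^{-1}+O(1)\,dz$ in a conformal coordinate centered at $q_i$. The requirement that $\phi_1=(1-g^2)\omega/2$ is regular while $\phi_2,\phi_3$ have double poles with vanishing residues (so that $x_1$ stays bounded and $x_2,x_3$ grow linearly without any logarithmic term) yields $A_i\neq 0$, $B_i=0$, and $g'(q_i)=0$. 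Thus every $q_i$ is a simple critical point of $g$; since Riemann--Hurwitz for a degree-$3$ rational map on $S^2$ gives ramification divisor of total degree $4$, already exhausted by the four $q_i$, the map $g$ has no further critical points, and all four of its critical values lie on the equator $|w|=1$.

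The last step is to rule out the global existence of any such pair $(g,\omega)$ on $S^2$. A natural route is to exploit the antiholomorphic involution $g\mapsto 1/\bar g$ of the Gauss sphere, which fixes the equator pointwise: combined with the reflection $x_3\mapsto -x_3$ of $\R^3$ it preserves all boundary values $g(q_i)$ and lifts to an antiholomorphic involution of $M$ fixing each puncture. This constrains the admissible $(g,\omega)$ to a real algebraic subfamily inside Bryant's explicit four-real-parameter description of $16\pi$-Willmore spheres \cite[Section~5]{Bryant}; one then checks, using Bryant's normal form together with the Kusner--Schmitt moduli description \cite{KusnerSchmitt}, that no embedded four-planar-ended configuration sits in this real subfamily, producing the contradiction.

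The main obstacle is the final step: the bookkeeping of the real structure inside Bryant's normal form and the verification that the real locus contains no admissible configuration. The naive parameter count is tight (four real moduli against the codimension-$2$ condition of four coplanar normals, cut down further by the equator constraint on critical values), so special solutions are not excluded by dimension alone, and some rigidity specific to $m=4$ --- e.g.\ that the branch divisor of $g$ is exactly the end divisor --- must be used in an essential way.
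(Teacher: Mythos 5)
Your reduction is sound and, up to the point where it stops, coincides with the route the paper sketches in the remark following Theorem~\ref{thm:forspheres}: assuming $d=2$, the Gauss map $g$ has degree $3$, each embedded planar end is a simple ramification point of $g$ (your local computation giving $A_i\neq 0$, $B_i=0$, $g'(q_i)=0$ is correct, and in fact does not even need horizontality), Riemann--Hurwitz gives total ramification $2\deg g-2=4$, so the branch divisor of $g$ is exactly the end divisor and all four branch values lie on the equator. The problem is that this is only the \emph{setup}; the contradiction itself is never derived. Your final step --- lifting the antiholomorphic involution $w\mapsto 1/\bar w$ of the Gauss sphere to an involution of the surface and then ``checking'' inside Bryant's four-parameter family that the real locus contains no admissible configuration --- is not carried out, and you acknowledge as much. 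Two concrete issues: (i) an antiholomorphic symmetry of the target $\Sp^2$ preserving the set of branch values does not automatically lift to a symmetry of the domain commuting with $g$, let alone one fixing each puncture and compatible with $\omega$, so the ``real subfamily'' is not yet well defined; (ii) even granting the lift, the assertion that this real locus contains no admissible configuration is precisely the content of the proposition, and the tight parameter count you mention means it cannot be settled by dimension reasons alone.

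The paper closes the gap in two concrete ways, either of which would complete your argument from exactly where it stops. The main proof (Lemmas~\ref{lem:M}, \ref{lem:M2} and Theorem~\ref{thm:forspheres}) writes $g=a/b$ with $a,b$ polynomials of degree $m-1=3$, expands them in the basis $\varphi_i$, and shows that the planar-end conditions $(b_i^*)$, $(b_1^*)$ say precisely that the coefficient vectors $\vec a,\vec b$ lie in the kernel of an explicit skew-symmetric matrix $M$, while horizontality of all the normals says $|a_j|=|b_j|$ for every $j$; for $m=4$ the kernel is two-dimensional with explicitly computable basis vectors, and Lemma~\ref{lem:M2} shows that no two linearly independent kernel elements can have componentwise equal moduli --- the contradiction. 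Alternatively, the remark after Theorem~\ref{thm:forspheres} invokes Montiel--Ros \cite[Corollary~16]{MontielRos}: if all branch values of $g$ lie on an equator, the minimal surface must have a logarithmic end, which is excluded here because all four ends are planar. Citing that result is the shortest way to finish your proof; as the paper notes, however, it is special to $m=4$, where the branch divisor and the end divisor coincide.
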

We think that this is always the case. Note that $\dim\operatorname{span}\{n_\Psi(p_i): i =1,...,m\} = \dim\operatorname{span}\{n_X(p_i): i =1,...,m\}$, where $X$ denotes the complete minimal surface after inverting at the $m$-fold point.  Our conjecture is the following:
\begin{conjecture} \label{conj}
  Let $X:\Sigma\setminus\{p_1,...,p_m\} \to\R^3$ be a complete minimal surface with $m$ embedded planar ends $\{p_1,...,p_m\}$ then the normals at the ends $\{n_X(p_i): i= 1,...,m\}$ span $\R^3$, i.e.\ $d=3$.
\end{conjecture}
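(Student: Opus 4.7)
My approach would be by contradiction. Suppose there exists a unit vector $v \in \R^3$ with $v \cdot n_X(p_i) = 0$ for every $i = 1, \ldots, m$. After rotating $\R^3$ we may assume $v = e_3$, so that all normals $n_X(p_i)$ lie in the horizontal plane. Equivalently, writing the Gauss map $g : \bar\Sigma \to \C \cup \{\infty\}$ of $X$ in stereographic coordinates, the assumption becomes
\begin{align*}
 |g(p_i)| = 1 \quad \text{for every } i = 1, \ldots, m.
\end{align*}
The goal is to contradict this constraint using the structure of the Weierstrass data $(g, \omega)$ of $X$: the form $\omega$ has only double poles at each $p_i$ with vanishing residue, and $g - g(p_i)$ vanishes to order $k_i \geq 2$ at $p_i$ (the order-two bound being the embeddedness condition at a planar end).

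A convenient geometric reformulation is through the translation Jacobi field $u_v := v \cdot n$ on $\Sigma \setminus \{p_1, \ldots, p_m\}$. Using the Weierstrass formula
\begin{align*}
 n = \frac{1}{1 + |g|^2}\bigl(2\Re g, \, 2\Im g, \, |g|^2 - 1\bigr),
\end{align*}
one has $u_v = (|g|^2 - 1)/(|g|^2 + 1)$, so the hypothesis is equivalent to $u_v(p_i) = 0$, with vanishing order $\geq k_i \geq 2$. This is very constrained: $u_v$ is a Jacobi field that decays at every end, and the question becomes whether such a configuration is realizable for a complete minimal surface with $m$ embedded planar ends.

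Since the formula $\Ind(\Psi) = m - d$ of Theorem~\ref{thm:Indexspheres} accommodates both values $d = 2$ and $d = 3$, the contradiction cannot come from an index count alone. Rather, one needs an integral identity on $\bar\Sigma$ that simultaneously encodes the holomorphic data (the residue-free double-pole structure of $\omega$) and the anti-holomorphic data (the reality condition $g(p_i)\overline{g(p_i)} = 1$). A natural candidate is Stokes applied to a carefully chosen meromorphic $1$-form $\eta$ built from $\omega$ and $g$, whose residues at each $p_i$ are rational functions of $g(p_i)$ and the local Weierstrass coefficients; imposing $|g(p_i)| = 1$ should convert the $m$ resulting contributions into a rank-$m$ linear system that is seen to be unsolvable. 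A more ambitious alternative would be to interpret $\{|g| = 1\} \subset \bar\Sigma$ as a real algebraic curve and bound its topological complexity by $\deg(g)$, to rule out the simultaneous inclusion of all $m$ punctures.

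The main obstacle is constructing such a test form, or producing the topological argument, in full generality. For $m = 4$ (Proposition~\ref{prop:dimthree}) Bryant's explicit four-real-parameter moduli space of Willmore spheres of energy $16\pi$ \cite{Bryant} permits a direct case-by-case verification. For general $m$ no analogous explicit parametrization is available; this is precisely why the statement remains a conjecture. Progress will, I expect, require combining the Weierstrass residue calculus with a finer analysis of the anti-holomorphic constraint imposed by $|g(p_i)| = 1$ — exactly the ingredient that standard residue arithmetic on $\bar\Sigma$, being purely holomorphic, does not naturally see.
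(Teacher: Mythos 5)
This statement is labeled a \emph{conjecture} in the paper, and the paper does not prove it: the authors establish only the lower bound $d\geq 2$ in general (Proposition~\ref{prop:allcodim2}, via the fact that a bounded harmonic function on the compactified surface is constant) and the full claim $d=3$ only for spheres with $m=4$ ends (Theorem~\ref{thm:forspheres}). Your proposal, by your own admission, is likewise not a proof. The setup is sound and matches the paper's partial results --- reducing to $|g(p_i)|=1$ for all $i$, observing that the translation Jacobi field $u_v=v\cdot n_X$ then vanishes to order $\geq 2$ at every end, and noting (correctly) that the index formula of Theorem~\ref{thm:Indexspheres} cannot by itself exclude $d=2$. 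But the decisive step is absent: the ``carefully chosen meromorphic $1$-form $\eta$'' is never constructed, and the assertion that imposing $|g(p_i)|=1$ yields a linear system ``that is seen to be unsolvable'' is not substantiated by any computation. A strategy that ends with ``the main obstacle is constructing such a test form'' has not closed the argument; as it stands nothing is proven beyond what the hypothesis already says.

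For comparison, the paper's route in the one case it does settle is more concrete than a residue identity: writing the Weierstrass data as $P_1=a^2$, $P_2=b^2$, $P_3=ab$ for polynomials $a,b$ of degree $m-1$, the planar-end conditions $(b_i^*)$, $(b_1^*)$ force the coefficient vectors of $a$ and $b$ (in the basis $\varphi_i$) to lie in the kernel of the explicit skew-symmetric matrix $M$ with off-diagonal entries $\frac{1}{p_i-p_j}$, and the equator condition $|g(p_i)|=1$ becomes the requirement that two linearly independent kernel vectors have componentwise equal moduli. For $m=4$ the kernel of $M$ is computed explicitly (after normalizing $p_1=\infty$, $p_2=0$, $p_3=1$ and solving $t^2-t+1=0$ via the Pfaffian), and Lemma~\ref{lem:M2} rules out such a pair. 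The authors themselves note that extending this to $m>4$ requires finer control of $\ker M$ than they possess, and that the alternative route through Montiel--Ros (branching values in an equator force a logarithmic end) also breaks down once the ramification points of $g$ outnumber the ends. So the obstruction you identify is real, but your proposal does not overcome it; if you want to make progress, the skew-symmetric matrix $M$ of Lemma~\ref{lem:M} is the concrete object whose kernel structure you would need to understand for general configurations $p_2,\dots,p_m$.
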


Theorem~\ref{thm:Indexspheres} and Proposition~\ref{prop:dimthree} implies
\begin{theorem}
 Let $\Psi:\Szw \to\R^3$ be an unbranched Willmore sphere with $16\pi $ Willmore energy. Then the Willmore Index is one.
\end{theorem}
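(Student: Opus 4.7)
The plan is to read off the conclusion directly from the two results stated just above, so the proof will consist essentially of a numerical substitution. First, I would invoke Bryant's classification (as quoted in the statement of Theorem~\ref{thm:Indexspheres}) to write $\W(\Psi) = 4\pi m$ for some integer $m \geq 1$. The hypothesis $\W(\Psi) = 16\pi$ forces $m = 4$, and in particular $m > 1$, so Theorem~\ref{thm:Indexspheres} is applicable and yields
\begin{align*}
\Ind(\Psi) \;=\; 4 - \dim\operatorname{span}\{n_\Psi(p_i) : i=1,\ldots,4\}.
\end{align*}

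Second, I would apply Proposition~\ref{prop:dimthree}, whose conclusion is tailored exactly to the case $m=4$: the four normals $n_\Psi(p_1), \ldots, n_\Psi(p_4)$ span all of $\R^3$, i.e.\ $d = 3$. Plugging this into the displayed equation gives $\Ind(\Psi) = 4 - 3 = 1$, which is the claim. No further argument is needed, and there is no genuine obstacle here — all the substance has already been absorbed into Theorem~\ref{thm:Indexspheres} and Proposition~\ref{prop:dimthree}. The only point worth flagging, for readers who might wonder why the theorem does not also discuss $\W(\Psi) = 8\pi, 12\pi$, is that $m = 2, 3$ are excluded by Bryant's non-existence results \cite{Bryant}, so $16\pi$ is in fact the smallest Willmore energy attained by a non-round Willmore sphere in $\R^3$.
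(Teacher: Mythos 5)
Your proposal is correct and coincides with the paper's own derivation: the authors state explicitly that this theorem follows from Theorem~\ref{thm:Indexspheres} combined with Proposition~\ref{prop:dimthree}, which is exactly the substitution $m=4$, $d=3$ that you perform. Nothing further is needed.
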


Another statement that follows from the results above:
\begin{theorem}
 All unbranched Willmore spheres in $\R^3$ except of the round sphere are unstable.
\end{theorem}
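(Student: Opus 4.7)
The plan is to derive this theorem as an immediate corollary of Theorem~\ref{thm:Indexspheres} combined with the universal dimension bound from Proposition~\ref{prop:allcodim2}. Given an unbranched Willmore sphere $\Psi:\Szw\to\R^3$ that is not the round sphere, I would first invoke the Bryant classification recalled in the introduction to write $\W(\Psi) = 4\pi m$ for some integer $m \geq 2$, and then appeal to Bryant's non-existence results \cite{Bryant} together with Michelat's extension \cite{Michelat9} to exclude the values $m \in \{2,3\}$, so that in fact $m \geq 4$.

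With this lower bound on $m$ in hand, Theorem~\ref{thm:Indexspheres} yields
\begin{align*}
 \Ind(\Psi) = m - d, \qquad d := \dim\Span\{n_\Psi(p_i): i = 1,\dots,m\}.
\end{align*}
Since the $n_\Psi(p_i)$ are unit vectors in $\R^3$, Proposition~\ref{prop:allcodim2} gives $d \leq 3$. Combining the two,
\begin{align*}
 \Ind(\Psi) \geq m - 3 \geq 1,
\end{align*}
so $\Psi$ admits at least one variational direction of strictly negative second variation, i.e.\ it is unstable.

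There is essentially no obstacle left at this stage: the statement is a one-line combination of the new Morse-index identity of this paper with a classical restriction on the admissible energy levels. The only sensitive point is the use of $m \geq 4$: without the exclusion of $m \in \{2,3\}$ one would only obtain $\Ind(\Psi) \geq m - 3$, which could a priori vanish or be negative. In this sense the theorem rests precisely on the interaction between Theorem~\ref{thm:Indexspheres}, the bound $d \leq 3$, and the known gaps in the Bryant spectrum $\{4\pi m : m \in \N\}$; no further analytic input is required.
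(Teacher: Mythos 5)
Your argument is correct and is exactly the route the paper intends: the theorem is stated as an immediate consequence of Theorem~\ref{thm:Indexspheres} together with the trivial bound $d\leq 3$ and the fact (from Bryant's classification) that a non-round unbranched Willmore sphere has $m\geq 4$, so $\Ind(\Psi)=m-d\geq 1$. The only cosmetic slip is attributing $d\leq 3$ to Proposition~\ref{prop:allcodim2}, which actually supplies the lower bound $d\geq 2$; the upper bound is immediate since the normals live in $\R^3$.
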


 For other topological types of $\Sigma$ and higher codimension we will prove upper bounds on the Index of the following kind:

\begin{theorem} \label{thm:upper bounds}
  Let $\Psi:\Sigma \to\R^n$, $n\geq 3$, be a closed Willmore immersion such that $X:= \frac{\Psi}{|\Psi|^2}:\Sigma\setminus\{p_1,..., p_m\} \to \R^n$ is a complete, immersed minimal surface with $m$ embedded planar ends, $m > 1$. \\
 Then the Morse Index of $\Psi$ is bounded in the following way:
\begin{align*}
 \Ind\left(\Psi\right) \leq km -k -1,
\end{align*}
where $k:= n-2$ is the codimension of the surfaces.
\end{theorem}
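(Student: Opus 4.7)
The plan is to transfer the second variation of $\W$ at $\Psi$ to a spectral problem on the complete minimal surface $X=\Psi/\abs{\Psi}^2$, using the conformal invariance of $\W$ under inversion. Any normal variation $\vec\phi$ of $\Psi$ pulls back via the inversion (up to a tangential correction) to a normal variation $\vec\psi$ on $X\colon\Sigma\setminus\{p_1,\dots,p_m\}\to\R^n$, and the identity $\delta^2\W(\Psi)(\vec\phi,\vec\phi)=\delta^2\W(X)(\vec\psi,\vec\psi)$ allows us to work entirely on $X$. Since $X$ is minimal, the Willmore energy vanishes to first order, so the second variation at $X$ reduces essentially to the quadratic form $\int_X \abs{L_X\vec\psi}^2\,d\mu_X$ of the Jacobi operator $L_X$, plus controlled boundary contributions localised at the punctures $p_i$.

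The second step is to characterise the negative subspace. A direction $\vec\phi$ producing $\delta^2\W(\Psi)(\vec\phi,\vec\phi)<0$ must, by the formula above, correspond to a $\vec\psi$ lying (modulo positive modes) in the kernel of $L_X$, with boundary behaviour at each $p_i$ that retains the negative boundary contribution. Concretely, we look for normal Jacobi fields on $X$ that admit at most logarithmic growth at each planar end; precisely these are the ones that are allowed by the finite-energy condition coming from the smoothness of $\vec\phi$ at the $m$-fold point of $\Psi$.

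The third step is the counting. At each embedded planar end $p_i$, local asymptotic analysis of $L_X$ -- using that the end is asymptotic to a plane in $\R^n$ whose normal space has dimension $k=n-2$ -- shows that the space of formal logarithmically growing Jacobi fields is exactly $k$-dimensional, with one mode per normal direction (corresponding, geometrically, to translating the asymptotic plane of that end). Summed over the $m$ ends this gives an ambient bound of $km$ on the dimension of the relevant Jacobi space. From this total we must subtract: first, at least $k$ residue-type constraints, arising from the requirement that the shifts of the individual ends assemble into a single well-defined variation of $\Psi$ at the $m$-fold limit point; and second, at least one further mode (coming, e.g., from an ambient dilation or a zero-eigenvalue direction of $L_X$), which is nonnegative and therefore cannot contribute to the index. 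This yields the claimed bound $\Ind(\Psi)\le km-k-1$, consistent with the codimension-one case $k=1$ appearing in Theorem~\ref{thm:Indexspheres}.

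The main obstacle is step two, namely giving a clean correspondence between genuinely negative directions for $\W$ at $\Psi$ and log-growing Jacobi fields on $X$. This requires a careful asymptotic expansion of $L_X$ near a planar end together with an integration-by-parts argument that controls the boundary contributions as the punctures are approached, and it must rule out spurious logarithmic modes that do not actually lift to smooth variations of $\Psi$. Once these asymptotic and residue computations are in hand, the linear-algebraic counting in step three gives the stated upper bound.
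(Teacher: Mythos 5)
Your proposal takes a fundamentally different route from the paper, and it has a genuine gap at its core. The paper's proof is elementary and does not analyse Jacobi fields on $X$ at all: it first shows $\Ind(\Psi)\le km$ by observing that any normal variation vanishing at all $m$ preimages $p_i$ preserves the $m$-fold point of $\Psi$, so by the Li--Yau inequality the energy cannot drop below $4\pi m$ and the second variation is nonnegative on that codimension-$km$ subspace (Theorem~\ref{thm:leqm}). It then improves this to $km-\rank J$ by subtracting translation-invariance Jacobi fields $a^\perp$, which do not change the index form, so as to force vanishing at $\rank J$ additional points (Theorem~\ref{thm:allcodim}). Finally $\rank J\ge k+1$ is proved by contradiction: if all normal spaces $N_{\Psi(p_i)}\Sigma$ coincided, a common unit normal $v$ would make $v\cdot X$ a bounded harmonic function on the punctured surface (bounded because the ends are planar), hence constant, forcing $X$ into a hyperplane and reducing by induction to the case of a plane, which contradicts $m>1$ (Proposition~\ref{prop:allcodim2}).

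The gap in your argument is step two. You assert that every direction with $\delta^2\W(\Psi)(\vec\phi,\vec\phi)<0$ must correspond, modulo positive modes, to a kernel element of $L_X$ with logarithmic growth. This is neither proved nor plausible as stated: the second variation formula (Theorem~\ref{thm:second variation formula}) shows that negative directions arise from the interaction between the leading coefficient $v$ of the variation at the $m$-fold point and the logarithmic coefficients $\beta_i$, namely the term $8\pi v\sum_i\beta_i$ competing against $\tfrac12\int(Lw_1)^2$ and the positive $v^2$-term; minimising over $w_1$ does not produce $Lw_1=0$. In the paper, log-growing Jacobi fields are used only for the \emph{lower} bound on the index (Theorem~\ref{thm:logJac implies index}, Corollary~\ref{cor:index > 0 on spheres}); turning that correspondence into an \emph{upper} bound would require showing every negative eigenfunction of the fourth-order operator $Z$ arises this way, which is a substantially stronger claim and is nowhere established. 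Your counting in step three is also unsupported: translating the asymptotic plane of an end yields a \emph{bounded} Jacobi field, not a logarithmically growing one (the latter is a global object whose dimension the paper computes as $m-\dim\Span\{n_X(p_i)\}$, and only for spheres in codimension one), and the subtraction of ``$k$ residue constraints plus one dilation mode'' appears reverse-engineered to match the answer rather than derived. Note finally that the theorem is asserted for arbitrary topological type and arbitrary codimension, where the Jacobi-field analysis of Section~\ref{section4} (which relies on Riemann--Hurwitz for $\Sigma=\mathcal{S}^2$) is not available, whereas the Li--Yau argument applies uniformly.
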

In the codimension one case the upper bound in Theorem~\ref{thm:Indexspheres} also holds for other topological types:
\begin{proposition}
 Let $\Psi:\Sigma \to\R^3$ be a closed Willmore immersion such that $X:= \frac{\Psi}{|\Psi|^2}:\Sigma\setminus\{p_1,..., p_m\} \to \R^3$ is a complete, immersed minimal surface with $m$ embedded planar ends, $m> 1$. Let $n_{\Psi}$ be a unit normal vector field along $\Psi$. Denote by 
$
  d:= \dim\operatorname{span}\{n_{\Psi}(p_1),...,n_{\Psi}(p_{m})\}$
the dimension of the normals at the $m$-fold point of the Willmore surface $\Psi$. Then we have that
\begin{align*}
 \Ind\left(\Psi\right) \leq m-d.
\end{align*}
 \end{proposition}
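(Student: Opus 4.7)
The plan is to establish the bound $\Ind(\Psi)\leq m-d$ through a residue analysis at the $m$ ends of the minimal surface $X=\Psi/|\Psi|^2$, mirroring the upper-bound half of Theorem~\ref{thm:Indexspheres} (that half of the argument does not depend on the specific topology of $\Sigma$). First I would transfer the problem to $X$: by conformal invariance of $\W$ under the inversion $X\mapsto\Psi$, a smooth normal variation $\phi\,n_\Psi$ of $\Psi$ corresponds to a normal variation $u\,n_X$ of $X$ on $\Sigma\setminus\{p_1,\ldots,p_m\}$ with controlled asymptotic behaviour at each end $p_i$, and the Willmore second variation at $\Psi$ in the direction $\phi$ reorganises (after a careful exchange of limits) into an intrinsic quadratic form in $u$ with the structure $\int_X|L_X u|^2\,d\mu_X$ plus boundary contributions at each end, where $L_X=\Delta_X+|A_X|^2$ is the Jacobi operator of $X$. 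Since the interior piece is a perfect square, any negative direction of the Willmore index form must come from the boundary contributions. A standard asymptotic analysis of bounded solutions of $L_X u=0$ at an embedded planar end then shows that admissible $u$'s carry a distinguished leading mode parametrised by a single scalar $\alpha_i(u)\in\R$ whose normal component at $p_i$ is collinear with $n_X(p_i)=n_\Psi(p_i)$, and that the boundary contribution at $p_i$ depends on $u$ only through $\alpha_i(u)$; collecting across the $m$ ends reduces the Willmore index problem to a quadratic form on the residue space $\R^m$.

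Conformal invariance of $\W$ under ambient translations of $\R^3$ then forces this quadratic form to vanish on a $d$-dimensional subspace. Indeed, for $v\in\R^3$ the family $\Psi_\epsilon=(X+\epsilon v)/|X+\epsilon v|^2$ satisfies $\W(\Psi_\epsilon)=\W(\Psi)$ for all small $\epsilon$, so the induced normal variation of $\Psi$ lies in the kernel of the Willmore index form; a direct computation shows that its residue vector is $(\langle v,n_\Psi(p_i)\rangle)_{i=1}^{m}$, whose image, as $v$ ranges over $\R^3$, has dimension equal to $d$ by the very definition of $d$. Consequently the Willmore index form is controlled by the $(m-d)$-dimensional quotient of residue data, and the min-max characterization of the Morse index yields $\Ind(\Psi)\leq m-d$.

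The main obstacle is the reduction to a perfect-square interior piece together with the precise boundary-term identification: one must verify that for an inverted minimal surface the Willmore Jacobi operator factors through $L_X^\ast L_X$ (up to the conformal change of variable induced by the inversion), and that the asymptotic expansion of admissible functions at an embedded planar end is indeed governed by a single scalar collinear with the ambient normal $n_\Psi(p_i)$. This residue computation at a planar end is the technical heart of the argument and is precisely what underlies the upper-bound half of Theorem~\ref{thm:Indexspheres}.
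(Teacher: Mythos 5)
Your overall architecture (reduce the index form to a finite-dimensional quadratic form on end data, then kill a $d$-dimensional subspace using conformal invariance) is workable in spirit, but the step that is supposed to produce the $d$-dimensional degenerate subspace is wrong as written. The family $\Psi_\epsilon=(X+\epsilon v)/|X+\epsilon v|^2$ is the inversion-conjugate of a translation of $X$, i.e.\ a \emph{special conformal transformation} of $\Psi$, not a translation of $\Psi$. Its variation field is $Di|_X(v)=|X|^{-2}\bigl(v-2\langle X,v\rangle X/|X|^2\bigr)$, whose normal component is $\pm|X|^{-2}\langle v,n_X\rangle\,n_\Psi$; since $|X|\to\infty$ at the ends, this field \emph{vanishes} at every $p_i$, so its residue/end-evaluation vector is $0$, not $(\langle v,n_\Psi(p_i)\rangle)_{i=1}^m$. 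Such variations already lie in the subspace on which the form is known to be nonnegative and buy you nothing beyond $\Ind(\Psi)\le m$. To get the improvement to $m-d$ you must use genuine ambient translations of the \emph{Willmore} surface, $\Psi_\epsilon=\Psi+\epsilon a$: these are $\W$-Jacobi fields $a^\perp$ whose values at the $p_i$ are $\langle a,n_\Psi(p_i)\rangle n_\Psi(p_i)$, and the span of the evaluation vectors $(\langle a,n_\Psi(p_1)\rangle,\dots,\langle a,n_\Psi(p_m)\rangle)$ as $a$ ranges over $\R^3$ has dimension exactly $d$. This is precisely what the paper does in Theorem~\ref{thm:allcodim}: subtract $a^\perp$ from a given $\vec v$ so that the difference vanishes at $d$ of the points, then invoke the fact that variations vanishing at all $m$ points cannot decrease the energy.

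A second, more minor, issue: the decomposition into ``perfect-square interior piece plus boundary contributions'' is not literally available. In the paper's expansion (Theorem~\ref{thm:2nd var well-def new} and Lemma~\ref{lem.expansion at an end}) the interior term $\int(Lw)^2$ and the boundary term $-2v_i^2\int_{\partial D_\epsilon(p_i)}\partial_\nu|X|^2\sim-8\pi v_i^2/\epsilon^2$ each diverge as $\epsilon\to0$ whenever $\psi(p_i)=v_i\neq0$; only their combination has a finite limit, so one cannot argue that negativity ``must come from the boundary contributions.'' Moreover the boundary data at an end are two scalars ($v_i$ and the logarithmic coefficient $\beta_i$), not one, although for the smooth variations relevant to the index $\beta_i=0$ and your one-scalar-per-end picture does hold. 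Note finally that the paper's actual proof avoids all of this analysis: it only uses the Li--Yau inequality (a variation fixing the $m$-fold point keeps the energy $\ge4\pi m=\W(\Psi)$, hence has nonnegative second variation, Theorem~\ref{thm:leqm}) together with the translation Jacobi fields and elementary linear algebra; the second-variation expansion at the ends is developed in the paper only for the \emph{lower} bounds.
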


For lower bounds on the Morse Index we study certain unbounded Jacobi fields of a minimal surface with embedded planar ends. One result is the following:
\begin{theorem}
Let $\Psi:\Sigma \to\R^3$ be a closed Willmore immersion such that $X:= \frac{\Psi}{|\Psi|^2}:\Sigma\setminus\{p_1,..., p_m\} \to \R^3$ is a complete, immersed minimal surface with $m$ embedded planar ends, $m> 1$. \\
Assume that there exists a logarithmically growing Jacobi field on $X$, i.e.\ a function $ u\in C^{2,\alpha}(\Sigma\setminus\{p_1,...,p_m\}) $, $L u = 0$, with expansion
$$u = \beta_i \log |z| + \tilde u_i (z) \ \text{ with } \ \tilde u_i \in C^{2,\alpha}(\overline{B_\epsilon}), \ \beta_i \in \R,  \  \sum_{i=1}^m \beta_i \not = 0.$$
at each end $p_i$ in local conformal coordinates $z$. \\
Then we have that $\Ind(\Psi) \geq 1$. 
\end{theorem}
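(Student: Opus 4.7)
The plan is to exhibit a smooth normal variation $\phi$ of the closed Willmore surface $\Psi$ for which the Willmore second variation $Q_\Psi(\phi):=\delta^2\W(\phi)$ is strictly negative; this immediately yields $\Ind(\Psi)\geq 1$. The starting point is the conformal correspondence between $\Psi$ and its minimal preimage $X$: because $\W$ is invariant under inversions, normal variations of $\Psi$ are in bijection (on $\Sigma\setminus\{p_1,\dots,p_m\}$) with normal variations of $X$, and under an appropriate conformal renormalisation $Q_\Psi$ is controlled by the minimal-surface Jacobi operator $L=\Delta_X+|A_X|^2$. Smooth bounded Jacobi fields on $X$ extend to smooth Willmore Jacobi fields on $\Psi$, but the log-growing $u$ of the hypothesis does not: on the $i$-th sheet of $\Psi$ at the $m$-fold point $q:=\Psi(p_i)$ the naive pullback of $u$ carries a logarithmic singularity with coefficient $\beta_i$.

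The test direction is built from $u$ in two pieces. Let $\chi_\epsilon$ be a smooth cut-off on $\Sigma$ equal to $1$ outside $\bigcup_i B_\epsilon(p_i)$ and to $0$ inside $\bigcup_i B_{\epsilon/2}(p_i)$. Then $\chi_\epsilon u$ is supported away from the ends and pushes forward to a smooth normal variation $\phi_\epsilon$ of $\Psi$ that vanishes near $q$. Let $\eta$ be a smooth normal variation of $\Psi$ supported in a small neighbourhood of $q$, concretely one realising an infinitesimal translation of the surface near $q$, and consider the one-parameter family $\phi^{(t)}:=\phi_\epsilon+t\eta$, $t\in\R$.

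Expanding the quadratic form gives $Q_\Psi(\phi^{(t)})=Q_\Psi(\phi_\epsilon)+2t\,B_\epsilon+t^2 Q_\Psi(\eta)$, where $B_\epsilon$ is the polarised cross term. The first and last summands are uniformly bounded in $\epsilon$: the former because $Lu=0$ on the support of $\chi_\epsilon$, so integration by parts reduces $Q_\Psi(\phi_\epsilon)$ to a sum of annular cut-off terms with finite limit as $\epsilon\to 0$; the latter because $\eta$ is smooth and fixed. The cross term $B_\epsilon$ reduces, after integration by parts on each end, to a boundary-residue invariant that is a nonzero universal multiple of $\sum_{i=1}^m\beta_i$. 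The hypothesis $\sum_i\beta_i\neq 0$ then forces $B_\epsilon$ to have a definite nonzero limit, so the quadratic polynomial $t\mapsto Q_\Psi(\phi^{(t)})$ has a nonzero linear coefficient and therefore attains strictly negative values for a suitable $t$, concluding the proof.

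The main obstacle is the precise computation of the cross term $B_\epsilon$: one must unwind the conformal correspondence near each $p_i$ and show that the Stokes/Pohozaev-type integration by parts pairing $u$ against an infinitesimal translation of $\Psi$ at $q$ isolates the scalar $\sum_i\beta_i$ from the log expansion $u=\beta_i\log|z|+\tilde u_i$, rather than some more refined vector-valued combination of the $\beta_i$ and the limiting normals $n_X(p_i)$. This step relies essentially on the planar structure of the ends and on the specific form of the first variation of the unit normal to $\Psi$ under a translation.
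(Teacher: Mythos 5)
Your strategy (pair the cut\mbox{-}off Jacobi field against a second, explicitly chosen direction and read off a nonzero linear term in $t$) is structurally the same as the paper's, but the specific second direction you pick makes the argument collapse. You take $\eta$ to be a localized \emph{infinitesimal translation} $\zeta\,(a\cdot n_\Psi)\,n_\Psi$ near the $m$-fold point $q$. The boundary-residue pairing of such a field against the log expansion $u=\beta_i\log|z|+\tilde u_i$ does \emph{not} isolate the scalar $\sum_i\beta_i$: since near each $p_i$ the translation field behaves like the constant $a\cdot n_X(p_i)$ times the unit normal speed (the ends are ramification points of the Gauss map, so $a\cdot n_X = a\cdot n_X(p_i)+O(z^2)$), the residue it extracts at $p_i$ is proportional to $\beta_i\,(a\cdot n_X(p_i))$, and the total is a multiple of $a\cdot\sum_{i=1}^m\beta_i\,n_X(p_i)$. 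This is exactly the ``more refined vector-valued combination'' you flag as the main obstacle, and it is \emph{identically zero}: pairing the bounded Jacobi field $a\cdot n_X$ (which has $\mathsf{Z}=0$ at the ends) against $u$ via the integration-by-parts formula of Lemma~\ref{lem.integration by parts formula}/Corollary~\ref{cor:integration by parts} forces $\sum_i\beta_i\,n_X(p_i)=0$ for \emph{every} logarithmically growing Jacobi field (this is precisely the inclusion $\mathsf{L}(J_{\log})\subset\ker A$ in Proposition~\ref{prop:space of logarithmic growing Jacobi fields on spheres}). Geometrically this had to happen: a translation moves all $m$ sheets by the same vector, preserves the $m$-fold point, and is a global $\W$-Jacobi field, so it cannot be the direction that detects instability. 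Hence your $B_\epsilon$ has no residue contribution, the linear coefficient in $t$ is not controlled by $\sum_i\beta_i$, and the proof does not close.

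The repair is to replace the translation by the variation with \emph{constant normal speed} $\psi\equiv v$ on the Willmore surface, i.e.\ each sheet at $q$ moves with speed $v$ along its \emph{own} normal $n_\Psi(p_i)$; since these normals are not all equal, this breaks the $m$-fold point. On the minimal-surface side this is $w_0=v|X|^2$, with $\Delta_g|X|^2=4$, and the cross pairing of the constant $4v$ against the log singularities of $u$ produces $8\pi v\sum_{i=1}^m\beta_i$ (Lemma~\ref{lem.expansion at an end}, Theorem~\ref{thm:second variation formula}). This is what the paper does: with $w=v|X|^2+u$ and $Lu=0$ one gets
\begin{align*}
\delta^2\W(\Psi)\bigl(\tfrac{w}{|X|^2},\tfrac{w}{|X|^2}\bigr)=8\pi v\sum_{i=1}^m\beta_i+2v^2\int_\Sigma\bigl(K_g^2|X|^4-4K_g|X|^2\bigr)\,d\mu_g ,
\end{align*}
and since the integral is a positive constant $c_X^2$, the choice $v=-2\pi\bigl(\sum_i\beta_i\bigr)/c_X^2$ gives a strictly negative value. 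Note also that the well-definedness of the second variation for the resulting non-smooth direction $v+u/|X|^2$ is not automatic and is the content of Theorem~\ref{thm:2nd var well-def new}; your cut-off-and-pass-to-the-limit step for $Q_\Psi(\phi_\epsilon)$ implicitly requires this kind of statement as well.
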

We gain the optimal lower bounds in the proof of Theorem~\ref{thm:Indexspheres} for spheres. 
For the above mentioned Morin surface \cite{Morin78, Kusner} we study the situation more carefully. It turns out that it is stable under variations that preserve the $4$-fold orientation reversing symmetry, see Corollary~\ref{cor:Morin}. And the variation that locally decrease the Willmore energy has a two-fold symmetry, see Proposition~\ref{prop:MorinSymmetry}. Both statements about the Morin surface were conjectured in \cite{Francis95, Francis97}.\\[-0.2cm]

We want to mention that $\Ind\left(\Psi\right) \leq m$ in the codimension one case was already proven by Alexis Michelat \cite{Michelat} with different methods. Furthermore, Michelat proposes in \cite{MichelatBranched} a renormalised energy expansion to compute the Index. Using this expansion he is able to reduce the Index question to the determination of the eigenvalues of a finite dimensional matrix $A$. He is able to give an upper bound  $m-1$ for the Index in codimension one. This approach is applicable also to branched Willmore spheres. \\[-0.2cm]

This article is structured as follows: In Section~\ref{Section2} we recall the main definitions concerning the Morse Index and prove the upper bounds. Also the stability of the Morin surface under variations that preserve the given symmetry can be found here. Section~\ref{section3} contains the computation of the second variation of the Willmore functional for functions that are not smooth but satisfy a certain expansion at the ends (after inversion). In this section we alse re-prove the formula for second variation of $\W$ proven by Michelat in \cite{Michelat} for smooth variations. We also explain how logarithmically growing Jacobi fields on the minimal surface with a certain condition produce positive Index on the Willmore surface.  Note that these unbounded Jacobi field were already studied in the work of P\'{e}rez-Ros \cite{PerezRos} about the moduli space of complete minimal surfaces. In Section~\ref{section4} we combine and generalize ideas of P\'{e}rez-Ros \cite{PerezRos} and Montiel-Ros \cite{MontielRos} to prove higher positive Index for spheres. And we show that the variation that produces Index for the Morin surface has a two-fold symmetry.
 
\section*{Acknowledgment}

We would like to thank Tobias Lamm and Ben Sharp for rising questions about the Morse Index of the Morin surface.\\
The second author would like to thank Fernando Marques, Casey Kelleher, Otis Chodosh, Tobias Lamm and Ernst Kuwert for their supportive interest in her work and for useful discussions.
Furthermore, the second author would like to thank Laurent Hauswirth and Joaqu\'{\i}n P\'{e}rez for very helpful discussions and reference hints.
The second author is funded by the DFG (MA 7559/1-1) and would like to thank the DFG for the support. Part of the work was done during a research stay of the second author at the Princeton University funded by the Leopoldina German National Academy of Sciences. The hospitality of the Princeton University and the generosity of the Leopoldina foundation is greatly appreciated.

\section{Upper bounds} \label{Section2}
In this section we prove upper bounds for closed Willmore immersions in $\R^n$, $n\geq 3$, coming from a complete minimal surface by inversion at a sphere. The key ingredient is the Li-Yau inequality from~\cite{LiYau}.\\
From now on $\Sigma$ denotes an abstract, two-dimensional closed manifold.

\begin{definition}\label{def:index}
Let $\Psi: \Sigma\to \R^n$ be a smooth Willmore immersion and $\hat g: = \Psi^\# \delta$.  Let $\Gamma(N\Sigma)$ be the smooth sections of the normal bundle of $\Psi:\Sigma\to\R^n$. Then there is a strongly elliptic, $L^2$-selfadjoint operator $Z: \Gamma(N\Sigma) \to \Gamma(N\Sigma)$ of fourth order such that 
\begin{align*}
 \delta^2 \W(\Psi)(\vec v, \vec v): = \int_{\Sigma}  \vec v\cdot Z\vec v\, d\mu_{\hat g},
\end{align*}
where $Z$ is characterized by the property
\begin{align}
 \frac{d^2}{d t^2} \W (\Psi_t)\big|_{t=0} = \int_{\Sigma}  \vec v\cdot Z\vec v\, d\mu_{\hat g},
\end{align}
for a \textbf{smooth} variation $\Psi_t:\Sigma\to \R^n$, $t\in (-\epsilon, \epsilon)$, i.e.\ $\Psi_t$ is a smooth immersion for $t\in(-\epsilon, \epsilon)$ and $\Psi_0 = \Psi$. Here, $\vec v :=\frac{d}{dt}\Psi_t\big|_{t=0}$ is the variational vector field. By composing with a tangential diffeomorphism we can always arrange that $\vec v\in \Gamma(N\Sigma)$, i.e.\ $\vec v^T =0$. The object $\delta^2 \W(\Psi)(\vec v, \vec v)$ is called \emph{Index form} and is well-defined for sections $\vec v$ that have regularity $W^{2,2}$ with respect to the metric $\hat g = \Psi^\# \delta$.\\
By the theory of strongly elliptic operators $Z$ can be diagonalized on $\Gamma(N\Sigma)$ with eigenvalues
$$\lambda_0 \leq \lambda_1\leq ... \leq \lambda_k\leq ...$$
The eigenspaces $Y_{\lambda_j}$ corresponding to $\lambda_j$ are finite dimensional.\\
The \emph{$\W$-Index} and the $\W$-nullity of $\Psi$ are defined as follows:
\begin{align*}
 \Ind(\Psi)& : = \dim \left(\otimes_{\lambda<0} Y_{\lambda}\right),\\
 \operatorname{nullity}_{\W}(\Psi)& : = \dim \left(Y_0 \right).
\end{align*}
A Willmore surface is called \emph{stable} if $\Ind(\Psi) = 0$. This is equivalent to $\delta^2 \W(\vec v, \vec v) \geq 0$ for all $\vec v \in \Gamma(N\Sigma)$.

  \end{definition}
  
  \begin{remark}
   \begin{enumerate}
    \item The operator $Z$ has the form $Z \vec v = L L \vec v+ \Omega \vec v$, where $\Omega:\Gamma(N\Sigma) \to\Gamma(N\Sigma)$ is an operator of order two
    and $L:\Gamma(N\Sigma) \to\Gamma(N\Sigma)$ is the Jacobi operator of the area functional in $\R^n$, i.e.\ $$L\vec v = \Delta^\perp \vec v + |A|^2 \vec v.$$ The operator $\Delta^\perp$ is the Laplacian on the normal bundle and $A$ is the second fundamental form of $\Psi:\Sigma \to\R^n$, see for example \cite{Colding}.\\
    As $\left(\Delta^\perp\right)^2$ is the leading part of $Z$ and $\Sigma$ is compact, we see that $Z$ is strongly elliptic. 
    \item An explicit formula for $Z$ for a two-sided Willmore immersion $\Psi:\Sigma \to N^3$ into a smooth $3$-dimensional manifold $N$ is computed in \cite{Lamm1}. In this case, the normal bundle is trivial and any normal variation is of the form $\vec v = \alpha n$, where $n$ is a unit normal along $\Psi$. The Jacobi operator for the area functional then reads $L\alpha = \Delta \alpha + \alpha |A|^2 + \alpha \operatorname{Ric}^N(n,n)$ for the Laplace Beltrami operator $\Delta$ and the Ricci tensor of $N$, called $ \operatorname{Ric}^N$. The formula for $Z$ can be found in \cite[(2.14)]{Lamm1}:
    \begin{align*}
     Z \alpha & = LL \alpha - \frac{1}{2} H^2 L \alpha + 2 H\langle \mathring{A}, \nabla^2 \alpha\rangle + 2 H \operatorname{Ric}^N( n,\nabla \alpha)^T + 2\mathring{A}(\nabla \alpha, \nabla H)  \\
     &  \quad\quad + \alpha \left(|\nabla H|^2 + 2 \operatorname{Ric}^N(n, \nabla H)^T + H\Delta H + 2 \langle \nabla^2 H, \mathring{A} \rangle \right. \\ 
     & \quad \quad\ \ \ \ \left.+ 2H^2 |\mathring{A}|^2 + 2 H\langle \mathring{A}, T\rangle - H \nabla_n  \operatorname{Ric}^N(n,n) \right).    \end{align*}
Note the different sign convention for $L$ in \cite{Lamm1}. Here, $H$ is the (scalar) mean curvature, $T_{ij} = \operatorname{Rm}^N (\partial_i, n ,n, \partial_j)$ and $\mathring{A}$ is the tracefree second fundamental form.
\item In \cite{Michelat}, Alexis Michelat computed the second variation for a Willmore immersion $\Psi:\Sigma \to N^n$ into a general $n$-dimensional manifold $N^n$, see formula $(2.11)$ in \cite{Michelat} and the remarks below that formula. Here, less regularity is required for the immersions $\Psi_t$, $t\in (-\epsilon, \epsilon)$. 
\item For a minimal immersion $\Psi:\Sigma \to \Sp^n$ the second variation was computed in \cite{Weiner} by Weiner for $n=3$ and in \cite{Ndiaye} by Ndiaye and Sch\"atzle for general $n$.
   \end{enumerate}

  \end{remark}

\begin{theorem} \label{thm:leqm}
 Let $\Psi:\Sigma \to\R^n$, $n\geq 3$, be a closed Willmore immersion such that $X:= \frac{\Psi}{|\Psi|^2}:\Sigma\setminus\{p_1,..., p_m\} \to \R^n$ is a complete, immersed minimal surface with $m$ embedded planar ends, $m\geq 1$. Then the Willmore index is bounded above by the number of ends times the codimension:
 \begin{align*}
  \Ind \left(\Psi\right)\leq k m,
 \end{align*}
where $k := n-2$.
\end{theorem}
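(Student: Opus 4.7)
The plan is to combine the Li--Yau inequality with a dimension count applied to the evaluation of negative directions at the $m$-fold point of $\Psi$. The crucial ingredient is the sharpness of Li--Yau at $\Psi$: writing $q_0 := \Psi(p_1) = \cdots = \Psi(p_m)$, the surface $\Psi$ has an $m$-fold point at $q_0$, so Li--Yau gives $\W(\Psi) \geq 4\pi m$. Conversely, using the conformal invariance of $|\mathring A|^2 d\mu$ together with Gauss--Bonnet on $\Sigma$ and the Jorge--Meeks formula $\int K_X\,d\mu_X = 2\pi\chi(\Sigma) - 4\pi m$ for complete minimal surfaces with $m$ embedded planar ends, one obtains $\W(\Psi) = 2\pi\chi(\Sigma) - \int K_X\,d\mu_X = 4\pi m$, independently of genus and codimension. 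Thus Li--Yau is attained with equality on $\Psi$.

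Now let $V \subset \Gamma(N\Sigma)$ denote the negative eigenspace of the fourth order operator $Z$ from Definition~\ref{def:index}, so $\dim V = \Ind(\Psi)$ and $\delta^2\W(\Psi)(\vec v,\vec v) < 0$ for every $\vec v \in V\setminus\{0\}$. By elliptic regularity the elements of $V$ are smooth. Introduce the evaluation map
\begin{equation*}
E : V \longrightarrow \bigoplus_{i=1}^m N_{p_i}\Sigma, \qquad \vec v \longmapsto \bigl(\vec v(p_1), \ldots, \vec v(p_m)\bigr).
\end{equation*}
Since each fiber $N_{p_i}\Sigma$ has dimension $k = n-2$, the codomain has dimension $km$. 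Suppose, for contradiction, that $\Ind(\Psi) \geq km+1$. Then $\ker(E|_V)$ is nontrivial, and we can select a nonzero smooth normal field $\vec v \in V$ with $\vec v(p_i) = 0$ for every $i = 1,\ldots,m$.

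Consider the straight-line variation $\Psi_t := \Psi + t\vec v$, which is a smooth immersion for $|t|$ small. Because $\vec v(p_i)=0$, we have $\Psi_t(p_i) = \Psi(p_i) = q_0$ for all $i$ and all $t$, so $\Psi_t$ still carries an $m$-fold point at $q_0$. The Li--Yau inequality therefore yields $\W(\Psi_t) \geq 4\pi m$ for every small $t$. On the other hand, since $\Psi$ is Willmore and $\frac{d}{dt}\Psi_t\bigr|_{t=0} = \vec v$ is normal, Definition~\ref{def:index} gives
\begin{equation*}
\W(\Psi_t) = 4\pi m + \tfrac{t^2}{2}\,\delta^2\W(\Psi)(\vec v,\vec v) + O(t^3) < 4\pi m
\end{equation*}
for all sufficiently small $t \neq 0$, contradicting Li--Yau. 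Hence $\Ind(\Psi) \leq km$, as required.

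The substantive obstacle in the argument is the equality $\W(\Psi) = 4\pi m$, without which the Li--Yau lower bound on $\W(\Psi_t)$ would not contradict the strict decrease of $\W$ along a negative direction; once one has that equality (from Bryant in the spherical case, or from Jorge--Meeks more generally), the remainder is a pure linear-algebra dimension count applied through the evaluation map $E$, and no further analytic subtleties arise.
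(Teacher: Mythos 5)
Your proposal is correct and follows essentially the same route as the paper: both arguments rest on the fact that $\W(\Psi)=4\pi m$ is exactly the Li--Yau lower bound forced by the $m$-fold point at $\Psi(p_1)=\dots=\Psi(p_m)$, that a normal variation vanishing at all $p_i$ preserves that $m$-fold point and hence cannot decrease $\W$ to second order, and a dimension count through the evaluation map $\vec v\mapsto(\vec v(p_1),\dots,\vec v(p_m))$ into $\bigoplus_{i=1}^m N_{p_i}\Sigma\simeq\R^{km}$. The only cosmetic difference is that the paper states this directly (the kernel $V_0$ of the evaluation map has codimension at most $km$ and lies in the nonnegative cone), whereas you run the equivalent contradiction argument on the negative eigenspace of $Z$.
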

\begin{remark}\begin{enumerate}
               \item  In \cite{Michelat}, Alexis Michelat gave a prove of the theorem above in codimension one  using an explicit formula for the second variation. That formula will play an important role later. But it is not needed in our proof here.
               \item Under the condition of the Theorem it follows from \cite[Lemma~1]{Kusner} that $\Psi$ has Willmore energy $4\pi m$ and that there is a point $x\in \Psi(\Sigma)$ with multiplicity $m$. In the given setting we have that $x=0$. \\
               In fact, a Willmore surface $\Psi$ has a point of multiplicity $m$ and $4\pi m$ energy if and only if $\Psi$ arises as the inversion of a complete minimal surface with $m$ embedded planar ends.\\
               It also follows from the condition of the Theorem that $\Psi$ does not have another point of multiplicity $m$. This is proven in \cite[Lemma~2]{Kusner} and can be seen with different methods in \cite[p.~1191]{Lamm2}.
              \end{enumerate}
\end{remark}

\begin{proof}
Note that $\Psi$ has a point of multiplicity $m$ with preimages $p_1,...,p_m$ and $4\pi m$ Willmore energy, see the remark above.\\
Let $N\Sigma=\bigcup_{p\in \Sigma} N_p\Sigma$ be the normal bundle of the closed immmersion $\Psi$. Denote by $\Gamma (N\Sigma)$ the smooth sections of the normal bundle. We define  
$$V_0: =\{\vec v \in \Gamma (N\Sigma): \vec v(p_i) = 0 \ \forall i=1,...,m\}.$$
For $\vec v\in V_0$ we define $\Psi_t: = \Psi + t \vec v $ for $t\in(-\epsilon, \epsilon)$, $\epsilon$ small but fixed. By the definition of $V_0$ we get that $\Psi_t(p_i) = \Psi_t(p_j)$ for $i\neq j$. This implies that $\Psi_t$ still has a point of multiplicity $m$. By Li and Yau \cite{LiYau} we know that the existence of a point of multiplicity $m$ implies at least $4\pi m$ amount of Willmore energy. This means that the Willmore energy cannot decrease for a variation of the form $\frac{d}{d t}\big|_{t=0}\Psi =\vec v\in V_0$. We get that
 \begin{align*}
  \delta^2\W(\Psi)(\vec v,\vec v) = \frac{d^2}{d t^2}\big|_{t=0}\W(\Psi_t) \geq 0,
 \end{align*} 
which implies $V_0\subset \{\vec v: \delta^2\W(\Psi)(\vec v,\vec v)\geq 0\}$. The set $V_0$ is a vector space of codimension at most $k m$ which can be seen by $V_0 = \operatorname{kernel} A$, where $A:\Gamma(N\Sigma) \to \otimes_{i=1}^m N_{p_i}\Sigma \simeq \R^{m\times k}$, $A(\vec v)=(\vec v(p_1),...,\vec v(p_m))$ is a linear map. By the definition of the Index we get that $\Ind(\Psi)\leq km$.
\end{proof}
\begin{remark}
 The property $\{v \in C^\infty(\Sigma): v(p_i) = 0\ \forall i=1,...,m\}\subset \{v: \delta^2\W(\Psi)(vn_{\Psi},vn_\Psi)\geq 0\}$ was already shown by Michelat in \cite{Michelat} with other methods for the codimension one case. 
\end{remark}

The result of Theorem~\ref{thm:leqm} can be improved in the following way:

\begin{theorem}\label{thm:allcodim}
 Let $\Psi:\Sigma \to\R^n$, $n\geq 3$, be a closed Willmore immersion such that $X:= \frac{\Psi}{|\Psi|^2}:\Sigma\setminus\{p_1,..., p_m\} \to \R^n$ is a complete, immersed minimal surface with $m$ embedded planar ends, $m > 1$. \\
 For every point $p_i$ consider the decomposition $\R^n = T_{\Psi(p_i)}\Sigma \oplus N_{\Psi(p_i)}\Sigma $ into the tangent and normal space. We define the linear map 
 $$J: \R^n \to \otimes_{i=1}^m N_{\Psi(p_i)}\Sigma \simeq \R^{mk}, \ \ J(v): = (\pi_1(v), ... ,\pi_m(v)),$$ where $\pi_i: \R^n \to N_{\Psi(p_i)}\Sigma$ is the projection of a vector along $\Psi$ at $p_i$ onto the normal space at that point. Then the Index of $\Psi$ is bounded in the following way:
\begin{align*}
 \Ind\left(\Psi\right) \leq km -\rank J,
\end{align*}
where $k:= n-2$ is the codimension of the surfaces.
\end{theorem}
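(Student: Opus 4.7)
The plan is to refine the argument from Theorem~\ref{thm:leqm} by enlarging the cone $V_0$ of normal variations that are forced to have non-negative second variation. Rather than demanding $\vec w(p_i)=0$ at every pre-image of the multiplicity point, I will only require that the tuple $(\vec w(p_1),\ldots,\vec w(p_m))$ arise as the normal projections of a single vector $v\in \R^n$. Explicitly, set
\begin{equation*}
  V_1 \defi \{\vec w\in \Gamma(N\Sigma)\colon (\vec w(p_1),\ldots,\vec w(p_m))\in \range J\}.
\end{equation*}
The evaluation map $A\colon\Gamma(N\Sigma)\to \bigoplus_{i=1}^m N_{\Psi(p_i)}\Sigma\simeq \R^{km}$ from the proof of Theorem~\ref{thm:leqm} is surjective (smooth sections of a bundle can realize any prescribed values at finitely many points), so $V_1=A^{-1}(\range J)$ has codimension exactly $km-\rank J$ in $\Gamma(N\Sigma)$.

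The central claim is that $\delta^2\W(\Psi)(\vec w,\vec w)\geq 0$ for every $\vec w\in V_1$. Given such a $\vec w$, pick $v\in\R^n$ with $\vec w(p_i)=\pi_i(v)$ for every $i$, and write $\pi_i^T(v):=v-\pi_i(v)\in T_{\Psi(p_i)}\Sigma$. Choose a smooth tangent vector field $\xi$ on $\Sigma$ with $D\Psi(p_i)[\xi(p_i)]=\pi_i^T(v)$ for every $i$; this is possible since $\pi_i^T(v)$ already lies in $T_{\Psi(p_i)}\Sigma\subset\R^n$. Setting $\vec\tau := D\Psi(\xi)$, the $\R^n$-valued field $\vec u \defi \vec w + \vec\tau$ along $\Psi$ then satisfies $\vec u(p_i)=v$ for every $i$. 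Consequently the affine variation $\Psi_t\defi \Psi + t\vec u$ maps every $p_i$ to the common point $tv$, so $\Psi_t$ has a point of multiplicity at least $m$ for every $t$. The Li--Yau inequality \cite{LiYau} then gives $\W(\Psi_t)\geq 4\pi m = \W(\Psi)$, whence $\frac{d^2}{dt^2}\W(\Psi_t)\bigr|_{t=0}\geq 0$.

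To convert this into information about $\delta^2\W(\Psi)(\vec w,\vec w)$ I invoke the standard fact that at a critical point of $\W$ only the normal part of the variational field contributes to the second variation. Concretely, let $\phi_t$ be the flow of $\xi$ on $\Sigma$; by diffeomorphism invariance of $\W$ the reparametrised family $\tilde\Psi_t := \Psi_t\circ \phi_{-t}$ satisfies $\W(\tilde\Psi_t)=\W(\Psi_t)$, and $\dot{\tilde\Psi}_0 = \vec u - D\Psi(\xi) = \vec w$ is a genuine normal variation. Because $\delta\W(\Psi)=0$, the contribution of $\ddot{\tilde\Psi}_0$ drops out and
\begin{equation*}
  \delta^2\W(\Psi)(\vec w,\vec w) = \tfrac{d^2}{dt^2}\W(\tilde\Psi_t)\bigr|_{t=0} = \tfrac{d^2}{dt^2}\W(\Psi_t)\bigr|_{t=0} \geq 0.
\end{equation*}
Thus $V_1\subset\{\vec v\in\Gamma(N\Sigma)\colon \delta^2\W(\Psi)(\vec v,\vec v)\geq 0\}$, and exactly as in the proof of Theorem~\ref{thm:leqm} this forces $\Ind(\Psi)\leq \operatorname{codim} V_1 = km - \rank J$.

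I do not expect a serious obstacle; the argument is a direct strengthening of Theorem~\ref{thm:leqm}. The one subtle point is in the final paragraph: one must carefully distinguish $\delta^2\W(\Psi)$, which is intrinsically defined on $\Gamma(N\Sigma)$, from the scalar second derivative of $\W$ along the non-normal path $\Psi_t$, and justify trading the tangential contribution $\vec\tau$ for a purely normal variation at second order. Once that bookkeeping is fixed, the rest is an immediate consequence of Li--Yau combined with the dimension count for $V_1$.
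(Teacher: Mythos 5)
Your proof is correct, and the subspace you isolate is in fact the same one the paper works with: your $V_1=A^{-1}(\range J)$ coincides with the kernel of the map $\hat A$ constructed in the paper's proof, both arguments rest on the Li--Yau inequality together with the translation invariance of $\W$, and the codimension count $km-\rank J$ is identical. The difference lies in how non-negativity of the index form on this subspace is established. The paper stays entirely inside $\Gamma(N\Sigma)$: it subtracts the translational $\W$-Jacobi field $a^\perp$ from $\vec v$, uses the symmetry of the index form to get $\delta^2\W(\Psi)(\vec v,\vec v)=\delta^2\W(\Psi)(\vec v-a^\perp,\vec v-a^\perp)$, and then observes that $\vec v-a^\perp$ vanishes at every $p_i$, so Theorem~\ref{thm:leqm} applies with the $m$-fold point fixed at the origin. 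You instead add a tangential correction so that the full variation field equals the constant vector $v$ at the $p_i$, note that the $m$-fold point merely translates to $tv$, apply Li--Yau directly to the translated configuration, and then gauge away the tangential part by reparametrisation. Your route avoids invoking that $a^\perp$ is a $\W$-Jacobi field and that Jacobi fields pair to zero under $\delta^2\W$, at the cost of having to justify that tangential components do not contribute to the second variation at a critical point --- which is precisely the normalisation built into Definition~\ref{def:index}, so nothing is missing. The bookkeeping point you flag at the end is real but standard, and your handling of it (criticality of $\Psi$ kills the contribution of the second-order term of the reparametrised family) is correct.
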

\begin{remark}
	The assumption $m>1$ is not really a restriction. Since a complete, immersed minimal surface with a single embedded planar end is a plane, the corresponding Willmore surface ist the absolute Willmore minimizer, the round sphere. 
\end{remark}

\begin{proof}
 As the Willmore functional is invariant under translations, each vector $a\in\R^n$ corresponds to a $\W$-Jacobi field $a^\perp \in \Gamma(N\Sigma)$, where $(\cdot)^\perp:\R^n \to N\Sigma$ is the projection onto the normal bundle.\\
 We use the notation $I:\otimes_{i=1}^m N_{\Psi(p_i)}\Sigma \to \R^{mk} $ for the isomorphism identifying $\otimes_{i=1}^m N_{\Psi(p_i)}\Sigma$ with $\R^{mk}$.  Let $d:=\rank J$ and arrange the matrix corresponding to the linear map $I\circ J$ in such a way that the first $d$ lines of $I\circ J$ are linearly independent. We denote by $\hat J: \R^n \to \R^{d}$ the map $\hat J = \sigma \circ I \circ J$, where $\sigma:\R^{mk} \to \R^d$ is the projection onto the first $d$ components. The map $\hat J$ has full rank, which implies that $\sigma\circ I (\vec v(p_1),..., \vec v(p_m))$ has a preimage under $\hat J$. We denote this vector by $a$ and notice $a = \hat J^{-1} \circ \sigma \circ I \circ A(\vec v)$, where $A:\Gamma(N\Sigma) \to \otimes_{i=1}^m N_{\Psi(p_i)}\Sigma $ is the evaluation at the points $p_1,...,p_m$, $A(\vec v) = (\vec v(p_1),..., \vec v(p_m))$.\\
  Define $\vec w:= \vec v - a^\perp$. Then $\vec w(p_1)= ... =\vec w(p_d)=0$ by construction, and $a^\perp$ is a $\W$-Jacobi field. The symmetry of the second variation implies $\delta^2\W(\Psi)(\vec v, a^\perp)=0$ and thus
 \begin{align*}
  \delta^2\W(\Psi)(\vec v, \vec v) &=  \delta^2\W(\Psi)(\vec v -a^\perp, \vec v- a^\perp)=  \delta^2\W(\Psi)(\vec w, \vec w).
 \end{align*}
Denote by $\hat A: \Gamma(N\Sigma) \to \{0\}^{d} \times \otimes_{i=d+1}^m N_{p_i}\Sigma \simeq \{0\}^{d} \times \R^{km  - d}$ the linear map $$\vec v\mapsto  (0,...,0,\vec w(p_{d+1}),..., \vec w(p_m)).$$ As in the proof of Theorem~\ref{thm:leqm} we know that $\operatorname{kernel} \hat  A \subset \{\vec v: \delta^2\W(\Psi)(\vec v, \vec v)\geq 0\}$. Since $\hat A$ is a linear map, the cokernel has at most dimension $km-d$, which implies $\Ind (\Psi)\leq km-d$.
\end{proof}

For the sake of simplicity we state the result of Theorem~\ref{thm:allcodim} for codimension one.

\begin{corollary}\label{cor:minusd}
 Let $\Psi:\Sigma \to\R^3$ be a closed Willmore immersion such that $X:= \frac{\Psi}{|\Psi|^2}:\Sigma\setminus\{p_1,..., p_m\} \to \R^3$ is a complete, immersed minimal surface with $m$ embedded planar ends, $m> 1$. Let $n_{\Psi}$ be a unit normal vector field along $\Psi$. Denote by 
 \begin{align*}
  d:= \dim\operatorname{span}\{n_{\Psi}(p_1),...,n_{\Psi}(p_{m})\}
 \end{align*}
the dimension of the normals at the $m$-fold point of the Willmore surface $\Psi$. Then we have that
\begin{align*}
 \Ind\left(\Psi\right) \leq m-d.
\end{align*}

\end{corollary}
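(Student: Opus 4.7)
The plan is to apply Theorem~\ref{thm:allcodim} directly in the case $n=3$, so that $k = n-2 = 1$, and to identify the rank of the map $J$ introduced there with the quantity $d$ defined in the statement of the corollary. Since the codimension is one, all the heavy lifting has already been done; the remaining content is just translating the general statement into the codimension one language where the normal bundle is a line bundle trivialised by $n_\Psi$.

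First I would note that for each end $p_i$ the normal space $N_{\Psi(p_i)}\Sigma$ is the one-dimensional line $\R \cdot n_\Psi(p_i) \subset \R^3$, and the projection $\pi_i: \R^3 \to N_{\Psi(p_i)}\Sigma$ is simply $\pi_i(v) = \langle v, n_\Psi(p_i)\rangle\, n_\Psi(p_i)$. Using the canonical isomorphism $\otimes_{i=1}^m N_{\Psi(p_i)}\Sigma \simeq \R^m$ given by sending $t\, n_\Psi(p_i)$ to $t$ in the $i$-th slot, the map $J$ from Theorem~\ref{thm:allcodim} becomes
\begin{equation*}
J : \R^3 \to \R^m, \qquad J(v) = \bigl(\langle v, n_\Psi(p_1)\rangle, \ldots, \langle v, n_\Psi(p_m)\rangle\bigr).
\end{equation*}

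The next step is to compute $\rank J$. The map $J$ is represented by the $m \times 3$ matrix whose $i$-th row is the vector $n_\Psi(p_i)^T$. Its rank equals the dimension of its row space, which is exactly
\begin{equation*}
\dim \operatorname{span}\{n_\Psi(p_1), \ldots, n_\Psi(p_m)\} = d.
\end{equation*}
Plugging $k=1$ and $\rank J = d$ into the conclusion of Theorem~\ref{thm:allcodim} yields $\Ind(\Psi) \leq km - \rank J = m - d$, which is the desired inequality.

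There is no real obstacle here: the corollary is a bookkeeping specialisation of Theorem~\ref{thm:allcodim}, and the only thing one has to check is the identification between the abstract rank of $J$ and the concrete span dimension of the unit normals. All the analytic content — the use of the Li--Yau inequality, the translational Jacobi fields $a^\perp$, and the symmetry of the second variation — is already encoded in the proof of Theorem~\ref{thm:allcodim}.
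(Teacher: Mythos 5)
Your proposal is correct and coincides with the paper's own proof, which likewise deduces the corollary from Theorem~\ref{thm:allcodim} by observing that in codimension one $\rank J = \dim\operatorname{span}\{n_{\Psi}(p_1),\dots,n_{\Psi}(p_m)\} = d$. The explicit matrix representation with rows $n_\Psi(p_i)^T$ is a clean way to justify that identification.
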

\begin{proof}
 Note that $ \dim\operatorname{span}\{n_{\Psi}(p_1),...,n_{\Psi}(p_{m})\} = \rank J$ for the linear map $J$ from Theorem~\ref{thm:allcodim}.
\end{proof}

\begin{proposition}\label{prop:allcodim2}
 Let $\Psi:\Sigma \to\R^n$, $n\geq 3$, be a closed Willmore immersion such that $X:= \frac{\Psi}{|\Psi|^2}:\Sigma\setminus\{p_1,..., p_m\} \to \R^n$ is a complete, immersed minimal surface with $m$ embedded planar ends, $m > 1$. \\
 As in Theorem~\ref{thm:allcodim} we define the linear map 
 $J(v) = (\pi_1(v), ... ,\pi_m(v)),$ where $\pi_i: \R^n \to N_{\Psi(p_i)}\Sigma$ is the projection of a vector along $\Psi$ at $p_i$ to the normal space at that point. Then we have that 
 \begin{equation}\label{eq:rank J> n-2} \rank J\geq k+1,\end{equation}
 where $k=n-2$ is the codimension of the surfaces. As a consequence we get that
 \begin{align}
  \Ind(\Psi) \leq km -k-1.
 \end{align}
\end{proposition}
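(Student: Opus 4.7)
The first step of my plan is to reduce to the rank estimate: given \eqref{eq:rank J> n-2}, the claimed index bound $\Ind(\Psi) \leq km - k - 1$ is immediate from Theorem~\ref{thm:allcodim}. Since $X = \Psi/|\Psi|^2$ sends every $p_i$ to infinity, $\Psi$ sends each $p_i$ to the origin of $\R^n$, and the projection $\pi_i : \R^n \to N_{\Psi(p_i)}\Sigma$ has kernel $T_{\Psi(p_i)}\Sigma$. Hence
\[ \ker J \;=\; \bigcap_{i=1}^m T_{\Psi(p_i)}\Sigma \;\subset\; \R^n, \]
and because each summand is two-dimensional, the rank bound $\rank J \geq k+1$ is equivalent to $\dim \ker J \leq 1$, i.e.\ the tangent planes of $\Psi$ at the $m$-fold point $0$ cannot all coincide.

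The key geometric input I would use is the identification, as linear subspaces of $\R^n$, of $T_{\Psi(p_i)}\Sigma$ with the direction of the asymptotic plane $P_i$ of the $i$-th embedded planar end of $X$. This is a classical consequence of the conformality of inversion: an affine $2$-plane not passing through the origin is sent by $v \mapsto v/|v|^2$ to a round $2$-sphere through the origin whose tangent plane at $0$ is parallel to the original plane. Arguing by contradiction, suppose $\dim \ker J \geq 2$. Since each $T_{\Psi(p_i)}\Sigma$ is exactly $2$-dimensional, all the tangent planes must then coincide with a single $2$-plane $V\subset\R^n$, so every end of $X$ is asymptotic to an affine translate of $V$. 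For each $w \in V^\perp$, the coordinate function $u_w := X \cdot w$ is harmonic on $\Sigma\setminus\{p_1,\dots,p_m\}$ (coordinates of minimal immersions are harmonic) and bounded near each $p_i$ (it converges to the height of the $i$-th asymptotic plane in direction $w$). The classical removable singularity theorem for bounded harmonic functions extends $u_w$ to a harmonic function on the closed surface $\Sigma$, forcing $u_w$ to be constant. Letting $w$ run over $V^\perp$ shows that $X$ takes values in a single affine $2$-plane $\Pi$ parallel to $V$.

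To close the argument I would extract a topological contradiction. Identifying $\Pi$ with $\C$, the conformal harmonic immersion $X$ becomes a holomorphic immersion $\Sigma \setminus\{p_1,\dots,p_m\}\to\C$ with $|X|\to\infty$ at every $p_i$, and so extends to a meromorphic map $\tilde X : \Sigma \to \mathbb{C}P^1$ whose critical points are contained in $\{p_1,\dots,p_m\}$. Writing $k_i \geq 1$ for the order of the pole of $\tilde X$ at $p_i$ and $d := \sum_i k_i$ for the degree, the Riemann--Hurwitz formula applied to $\tilde X$ yields
\[ \chi(\Sigma) \;=\; 2d - \sum_{i=1}^m (k_i - 1) \;=\; d + m \;\geq\; 2m \;\geq\; 4, \]
contradicting $\chi(\Sigma) \leq 2$ for any closed connected surface; the non-orientable case is handled by passing to the orientable double cover of $\Sigma$. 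The main conceptual obstacle in this plan is the identification of $T_{\Psi(p_i)}\Sigma$ with the linear direction of the $i$-th asymptotic plane; this is well-known, but is most transparently seen by writing the end in its asymptotically planar form and differentiating the inversion formula $\Psi = X/|X|^2$ directly.
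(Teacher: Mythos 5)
Your proposal is correct, and its core mechanism is the same as the paper's: negate the rank bound to force all tangent (equivalently normal) spaces at the $p_i$ to coincide, observe that the coordinate functions of the minimal surface $X$ in the common normal directions are harmonic and, because the ends are planar, bounded, and then invoke the removable-singularity/compactification argument (the paper's Lemma~\ref{lem:bounded harmonic functions are constant}) to conclude they are constant. You differ in two places. First, you apply this to \emph{all} of $V^\perp$ at once and conclude directly that $X$ lies in an affine $2$-plane, whereas the paper treats one normal direction at a time and runs an induction on $n$, translating $X$ so that the hyperplane $\{v\cdot x=0\}$ is inversion-invariant and passing to $\Psi:\Sigma\to\R^{n-1}$; your version avoids that bookkeeping and is arguably cleaner. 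Second, your endgame is different: the paper, once reduced to $n=3$, simply asserts that a complete immersed minimal surface contained in a plane is a plane and hence has only one end, while you extend the planar $X$ to a meromorphic map $\tilde X:\Sigma\to\mathbb{C}P^1$ unramified away from the poles and get $\chi(\Sigma)=d+m\ge 2m\ge 4$ from Riemann--Hurwitz, a contradiction. This actually supplies a proof of the step the paper leaves terse, and it has the side benefit of handling all $m>1$ and the non-orientable case (via the double cover) explicitly. Both routes rest on the same two inputs -- conformality of the inversion identifying $N_{X(p_i)}\Sigma$ with $N_{\Psi(p_i)}\Sigma$, and the constancy of bounded harmonic functions on the compactified surface -- so the proofs are interchangeable; yours trades the paper's induction for a short Riemann--Hurwitz count.
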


\begin{proof}
Since $\pi_1: \R^n \to N_{\Psi(p_1)}\Sigma \cong \R^{n-2}$ is an orthogonal projection we clearly have 
\[ n-2 \le \rank J \le n. \]

Before we are going to show \eqref{eq:rank J> n-2} by induction on $n\ge3$ let us note that since $i(x):=\frac{x}{\abs{x}^2} : \R^n \to \R^n$ is conformal and $X=i\circ \Psi$ we deduce that the normal space of $X$ at one of the embedded planar end $p_i$ we have that
\[ N_{X(p_i)}\Sigma = N_{\Psi(p_i)}\Sigma. \]
Now assume by contradiction that $rank J=n-2$ i.e.\ $N_{\Psi(p_i)}\Sigma=N_{\Psi(p_j)}\Sigma$ for all $i, j$. Since they all agree and $\dim N_{\Psi(p_i)}\Sigma \ge 1$ we may fix $v \in N_{\Psi(p_i)}\Sigma$ with $\abs{v}=1$. As the coordinate functions of $X$ are harmonic, we conclude that 
\[ u(x):= v \cdot X(x)\]
is a harmonic function on $\Sigma \setminus \{ p_1, \dotsc, p_m \} $. But since we have chosen $v\in N_{X(p_i)}\Sigma$ and each end $p_i$ is planar, we deduce that $u$ is actually bounded. This implies that $u$ is constant, $u \equiv u_0$, compare Lemma~\ref{lem:bounded harmonic functions are constant}. In particular, we have $X(\Sigma) \subset \{ x \in \R^n \colon v\cdot x = u_0 \} \cong \R^{n-1}$. If $n=3$ we have that $X$ is a plane, which contradicts the assumption $m>1$. For $n>3$ we may translate $X$ if necessary to ensure that $u_0$ is $0$. But since $ \{ x \in \R^n \colon v\cdot x = 0 \} $ is invariant under $i$ we deduce that $\Psi: \Sigma \to \{ x \in \R^n \colon v\cdot x = 0 \} \cong \R^{n-1}$. This as well a contradiction by induction. 
\end{proof}

For a surface of codimension one, Proposition~\ref{prop:allcodim2} obviously reads $\Ind(\Psi)\leq m-2$. It turns out that we can improve this result for spheres of codimension one with four embedded planar ends.

\begin{theorem}\label{thm:forspheres}
  All Willmore spheres $\Psi:\mathcal{S}^2\to\R^3$ that arise as an inversion of a complete minimal surfaces with four embedded planar ends satisfy $$\Ind(\Psi)\leq 1.$$
\end{theorem}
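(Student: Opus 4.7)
The plan is to obtain this bound essentially for free by combining two results that have already been proved in this section, namely Corollary~\ref{cor:minusd} and Proposition~\ref{prop:dimthree}. So really, there is no substantial new content to prove here; the job is just to verify that the hypotheses of those two statements apply to the setting at hand.

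First, I would check that the hypotheses of Proposition~\ref{prop:dimthree} are met. By assumption $\Psi: \mathcal{S}^2 \to \R^3$ is a smooth Willmore sphere arising from the inversion of a complete minimal surface $X := \Psi/\abs{\Psi}^2$ with four embedded planar ends $p_1,\dotsc,p_4$. The inversion $x\mapsto x/\abs{x}^2$ is a diffeomorphism away from the origin and turns the removed $m$-fold point of $\Psi$ into the four punctures of $X$, so the domain of $X$ is $\mathcal{S}^2 \setminus \{p_1,\dotsc,p_4\}$, i.e.\ $X$ is indeed a complete minimal sphere with four embedded planar ends. Proposition~\ref{prop:dimthree} then yields
\[
 d := \dim\operatorname{span}\{n_\Psi(p_1),\dotsc,n_\Psi(p_4)\} = 3.
\]

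Second, since $\Psi$ is a codimension-one closed Willmore immersion satisfying exactly the hypotheses of Corollary~\ref{cor:minusd} with $m=4>1$, I can apply that corollary directly to conclude
\[
 \Ind(\Psi) \le m - d = 4-3 = 1,
\]
which is the desired bound. The main and only obstacle would have been establishing $d=3$ for the four-ended case, but this is precisely the content of Proposition~\ref{prop:dimthree}, which is already in hand. Combining with the general lower bound implicit in Theorem~\ref{thm:Indexspheres} (which gives $\Ind(\Psi)=m-d\ge 1$ whenever $m>1$, since $d\le 3<m$ for $m=4$) would then pin down the exact value $\Ind(\Psi)=1$, but this is not needed for Theorem~\ref{thm:forspheres} itself.
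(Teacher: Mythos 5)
Your reduction is the same one the paper uses in its final step: Corollary~\ref{cor:minusd} gives $\Ind(\Psi)\leq m-d$, and with $m=4$, $d=3$ one gets $\Ind(\Psi)\leq 1$. The problem is that you treat Proposition~\ref{prop:dimthree} as ``already in hand,'' when in the paper it is not proved anywhere prior to, or independently of, Theorem~\ref{thm:forspheres}: the proof of Theorem~\ref{thm:forspheres} \emph{is} the place where $d=3$ gets established. Concretely, the paper starts from Proposition~\ref{prop:allcodim2} (which only gives $d\in\{2,3\}$), assumes for contradiction that $d=2$, and then rules this out by a genuine argument: writing the four-ended minimal sphere via its Weierstra\ss{} data, translating the embedded-planar-end conditions and the ``normals lie in an equator'' condition into the existence of two linearly independent vectors $\vec a,\vec b$ in the kernel of the skew-symmetric matrix $M$ of Lemma~\ref{lem:M} with $\abs{a_j}=\abs{b_j}$ for all $j$, computing the Pfaffian of the explicit $4\times 4$ matrix $M^4$ to pin down the cross-ratio parameter $t$ with $t^2-t+1=0$, exhibiting the kernel basis $v_1=(1,-1,1,0)$, $v_2=(\tfrac1t,-1,0,1)$, and invoking Lemma~\ref{lem:M2} to show no such pair $\vec a,\vec b$ exists. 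None of this appears in your proposal, so as written it is circular relative to the paper's logical structure: you have deferred the entire mathematical content of the theorem to a statement whose only proof in the paper is the proof you were asked to supply.

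To repair this you would either need to prove $d=3$ for four embedded planar ends yourself (e.g.\ by the matrix argument above, or by the alternative route the paper mentions via Montiel--Ros on ramification points of the Gau\ss{} map lying in an equator), or explicitly acknowledge that Proposition~\ref{prop:dimthree} is the open content and cannot be cited as a prior result here. The closing remark about combining with Theorem~\ref{thm:Indexspheres} to get equality is likewise not available at this stage, since the lower bound is only established in Section~\ref{section4}.
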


 For the proof of this theorem we need two lemmas.
 \begin{lemma} \label{lem:M}
 For $p_2,...,p_m \in \C$ we define the anti-symmetric matrix $M \in \C^{m\times m}$ by
\[ M_{ij}:= \begin{cases} \frac{1}{p_i - p_j} \text{ for } i \neq j\text{ and } i,j\geq 2\\
-1 \text{ for } i =1 , j>1\\ 
1  \text{ for } i>1 , j=1\\
0  \text{ for } i=j.
\end{cases} \ \ \ i,j=1,...,m. \]
If there are two linearly independent vectors $\vec a, \vec b$ in the kernel of $M$ such that $|a_i|= |b_i|$ for $ i=1,...,m$ then there is a complete minimal sphere in $\R^3$ with $m$ embedded planar ends where $\dim\Span\{n_{X}(p_1),..., n_{X}(p_m)\}=2$.
 \end{lemma}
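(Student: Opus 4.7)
The proof is a Weierstrass-representation construction. By a Möbius reparametrization of $\mathbb{CP}^1$, I may assume $p_1 = \infty$ while $p_2,\ldots,p_m \in \C$ are as in the statement. The goal is to build a minimal sphere $X : \mathbb{CP}^1 \setminus \{p_1,\ldots,p_m\} \to \R^3$ through the formula $X = \operatorname{Re}\int(\phi_1,\phi_2,\phi_3)$, where the meromorphic $1$-forms $\phi_k$ on $\mathbb{CP}^1$ satisfy the conformality relation $\phi_1^2+\phi_2^2+\phi_3^2 = 0$. The embedded-planar-end condition at each $p_i$ is equivalent to requiring each $\phi_k$ to have a pure double pole at $p_i$ with vanishing residue.

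The key step is to arrange the Weierstrass data so that the no-residue conditions reduce precisely to the system $M\vec c = 0$. A natural ansatz is to write
\[
\omega_{\vec c} \;\defi\; c_1\, dz \;+\; \sum_{j=2}^m \frac{c_j\, dz}{(z-p_j)^2},
\]
where $c_1$ encodes the double-pole data at $p_1 = \infty$ (as seen in the $w = 1/z$ chart). Combined with a rational Gauss map $g$ having a critical point at every end, a Laurent-expansion computation expresses each residue $\operatorname{Res}_{p_i}\phi_k$ as a linear functional of $\vec c$. The tail of $\omega_{\vec c}$ at $p_i$ produces the entries $1/(p_i-p_j)$ for $i,j\ge 2$; the global residue theorem on $\mathbb{CP}^1$ produces the first-row relation $\sum_{j\ge 2} c_j = 0$; and the coupling between the data at $\infty$ and at the finite ends produces the column/row entries $\pm 1$. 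Altogether these are exactly the defining relations $M\vec c = 0$.

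Given linearly independent $\vec a,\vec b \in \ker M$ with $\abs{a_i}=\abs{b_i}$, I use $\omega_{\vec a}$ and $\omega_{\vec b}$ as the data for $\phi_1 - i\phi_2$ and $\phi_1 + i\phi_2$ respectively, and determine $\phi_3$ (up to sign) from conformality via $\phi_3^2 = -\omega_{\vec a}\omega_{\vec b}$. The resulting immersion is a complete minimal sphere with $m$ embedded planar ends. The unit normal at the end $p_i$ is the stereographic image of the Gauss-map value $g(p_i) = \phi_3/(\phi_1-i\phi_2)\bigr|_{p_i}$, and from $g^2 = -\omega_{\vec b}/\omega_{\vec a}$ and the leading coefficients of the two $1$-forms at $p_i$ one reads off $g(p_i)^2 = -b_i/a_i$, hence $\abs{g(p_i)} = \sqrt{\abs{b_i}/\abs{a_i}} = 1$ for every $i$. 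All values $g(p_i)$ therefore lie on the unit circle of $\mathbb{CP}^1$, whose stereographic image is the equator of $\Szw$, contained in the $2$-plane $\{x_3 = 0\} \subset \R^3$. Thus $\dim \Span\{n_X(p_i)\} \le 2$. Equality holds because $\dim = 1$ would force all $g(p_i)$ to coincide, and combined with $M\vec a = M\vec b = 0$ this would force $\vec a$ and $\vec b$ to be linearly dependent.

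The main obstacle is the bookkeeping in the second paragraph: tracing the Laurent expansions of $\phi_1,\phi_2,\phi_3$ at every $p_i$ (including $\infty$) and verifying that the no-residue conditions decouple into exactly the matrix $M$ of the lemma, with the special $\pm 1$ entries in the first row and column originating from the distinguished role of $p_1 = \infty$. Once this identification is pinned down, the remainder is a direct substitution of $\vec a$ and $\vec b$ into the Weierstrass data followed by reading off the Gauss-map values at the ends.
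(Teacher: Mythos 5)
Your overall strategy (Weierstrass representation, reduce the planar\mbox{-}end conditions to linear algebra in the data, then read off $\abs{g(p_i)}=1$ from $\abs{a_i}=\abs{b_i}$ so that the normals lie in an equator) is the right one and matches the spirit of the paper. But the central step --- the claim that with the ansatz $\omega_{\vec c}= c_1\,dz + \sum_{j\ge 2} c_j (z-p_j)^{-2}\,dz$ the no\mbox{-}residue conditions ``reduce precisely to $M\vec c=0$'' --- does not hold, and this is where the actual content of the lemma lives. Every term $c_j(z-p_j)^{-2}\,dz$ has vanishing residue at every finite point, so $\omega_{\vec c}$ satisfies the no\mbox{-}residue condition identically, for all $\vec c$; no linear system appears. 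The regular part of $\omega_{\vec c}$ at $p_i$ involves $(p_i-p_j)^{-2}$, not $(p_i-p_j)^{-1}$, so the entries of $M$ cannot come from there either; and the residue theorem on $\mathbb{CP}^1$ is likewise automatic for $\omega_{\vec c}$ and yields no relation $\sum_{j\ge2}c_j=0$. The nontrivial constraints in your setup would sit entirely in $\phi_3$ with $\phi_3^2=-\omega_{\vec a}\omega_{\vec b}$: you would first need this quadratic differential to be a global square (an unaddressed, nonlinear condition on $\vec a,\vec b$), and the resulting residue condition on $\phi_3$ at each end is a single equation coupling $\vec a$ and $\vec b$, not the two decoupled systems $M\vec a=M\vec b=0$.

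The paper avoids this by linearizing at the level of the ``square roots'': it writes $\varphi_1^2(\phi_1+i\phi_2)=-a^2$, $\varphi_1^2(\phi_1-i\phi_2)=b^2$, $\varphi_1^2\phi_3=ab$ for polynomials $a,b$ of degree $m-1$ with $g=a/b$, and expands $a=\sum_l a_l\varphi_l$, $b=\sum_l b_l\varphi_l$ in the Lagrange\mbox{-}type basis $\varphi_1=\prod_{j\ge2}(z-p_j)$, $\varphi_i=\prod_{j\ne i}(z-p_j)$. The no\mbox{-}residue condition at $p_i$ becomes $(a/\varphi_i)'(p_i)=0$, and it is the identity $\varphi_l'(p_i)=\varphi_i(p_i)/(p_i-p_l)$ that produces the off\mbox{-}diagonal entries $1/(p_i-p_l)$, while the condition at $p_1=\infty$ produces the row $-\sum_{i\ge2}a_i=0$; this is exactly $M\vec a=0$ (and $M\vec b=0$), linear in the coefficients of $a$ and $b$ rather than in the partial\mbox{-}fraction coefficients of $\phi_1\pm i\phi_2$. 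Finally, $\abs{g(p_i)}=\abs{a_i}/\abs{b_i}$, which is where $\abs{a_i}=\abs{b_i}$ enters. Without this spinor/polynomial reformulation your identification of $M$ fails, so the proof as proposed has a genuine gap; the rest (periods, $\phi\ne0$, embeddedness of the ends, excluding $d=1$) is comparatively routine but also needs to be said.
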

 \begin{proof}
  Let $X:\Sp^2 \to\R^3$ be a complete minimally immersed sphere in $\R^3$ with $m$ embedded planar ends where $\dim\Span\{n_{X}(p_1),..., n_{X}(p_m)\}=2$.\\
   Without loss of generality we may assume that the minimal immersion $X$ is conformally parametrized, hence we have
\[ X(z) = \Re \left( \int \phi \, dz \right) \]
for some holomorphic function $\phi=(\phi_1, \phi_2, \phi_3): \hat{\C} \setminus\{p_1, \cdots, p_{m} \} \to \C^3$ satisfying $\phi^2 =0$ and $\phi(z) \neq 0\ \forall z \in \hat{\C}\setminus \{ p_1, \cdots, p_{m}\}$, (the last condition ensures that $X$ is an immersion). 
Precomposing $\phi$ with a M\"obius transformation of $\hat{\C}$ we may assume that $p_1 = + \infty$ (later $p_2=0$, $p_3=1$).

Recall that $X$ having a planar end in $p_i$ implies that locally we have
\[
\phi(p_i+z) = -\frac{v_i}{z^2} + h_i(z) 
\]
for some $v_i \in \C^3\setminus \{0\}, v_i^2 =0$ and some holomorphic $h_i(z): D_\delta \to \C^3$. In particular we deduce that
\begin{itemize}
\item[$(a_i)$] $\abs{(z-p_i)^2 \phi (z)}$ is bounded in a neighborhood of $p_i$ for all $i>1$;
\item[$(b_i)$] $ \lim_{z \to p_i} \partial_z \left( (z-p_i)^2 \phi (z) \right)= 0$ for all $i>1$.
\end{itemize}
Similarly we can state the conditions at $p_1=+\infty$. Observe that $z \mapsto X(\frac{1}{z})$ is a local conformal parametrization around $p_1$. Since in this parametrization $0$ is a planar end we must have that 
\[ 2\partial_z X(\frac{1}{z}) = - \frac{v_1}{z^2} + h_1(z) \]
for some $v_1 \in \C^3\setminus\{0\}, v_1^2=0$ and some holomorphic $h_1(z):D_\delta \to \C^3$. 
But since $2\partial_z X(\frac{1}{z}) = -\frac{\phi(\frac{1}{z})}{z^2}$
we deduce
\begin{itemize}
\item[$(a_1)$] $\abs{\phi (\frac{1}{z})}$ is bounded in a neighborhood of $0$, i.e.\ $\abs{\phi(z)}$ is bounded at $p_1=+\infty$.
\item[$(b_1)$] $ \lim_{z\to 0 } \partial_z \phi(\frac{1}{z}) =0$ i.e. $\lim_{z \to \infty} z^2 \phi(z) =0$.
\end{itemize}
In the following we want to translate the condition that the dimension of the normals at the ends is two and $ (a_i), (b_i)$, $i=1, \dotsc , m$ into conditions on the Weierstra\ss\ data. \\

After a rotation we may assume that the normal $n_{X}^i$ at the end $p_i$ is orthogonal to $e_3$. This implies that in the expansions of $\phi$ at $p_i$ we must have $(v_i)_3 \neq 0$ and $\lim_{z \to p_i} \phi_3(z) \neq 0$ for all $i$. 
Recall that we can write \[\phi = \left( \frac 12 ( g^{-1} - g ) \phi_3, \frac{i}{2}( g^{-1} + g ) \phi_3, \phi_3 \right)\] for some meromorphic $g$ on $\hat{\C}$. \\
It is convenient to introduce the following polynomials:
\[
 \varphi_1(z):=  \prod_{j=2}^m (z-p_j), \quad \varphi_i(z):= \prod_{\substack{j=2\\ j\neq i}}^m (z-p_j), \ \ \text{ for } i=1,...,m.
\]
Observe that the functions
\begin{align*} P_1&:=-\varphi_1^2 (\phi_1+ i \phi_2)= \varphi_1^2 g \phi_3\\
P_2&:= \varphi_1^2 (\phi_1- i \phi_2)= \varphi_1^2 g^{-1} \phi_3 \\
P_3&:= \varphi_1^2 \phi_3 \end{align*}
have removable singularities at $p_i$ for all $i>1$ by condition $(a_i)$ and grow at most like $\abs{z}^{2(m-1)}$ at $p_1$ by $(a_1)$. Hence $P_l$ is a polynomial of order $\le 2(m-1)$ (``$=$'' for $l=3$).
Furthermore note that $P_1P_2 = P_3^2$ hence we must have $P_1= a^2 c$, $P_2=b^2 c$, $P_3 = abc$ for some polynomials $a,b,c$. But since \[ \phi_1= \frac{1}{2\varphi_1^2}( P_2 - P_1),\quad \phi_2=\frac{i}{2\varphi_1^2}(P_2 + P_1),\quad  \phi_3 = \frac{1}{\varphi_1^2} abc \]
we deduce that $c=const.$ otherwise there would be a point with $\phi(z)=0$. 
Let us summarize that $P_1=a^2, P_2=b^2, P_3=ab$. As observed previously we must have that $P_3$ is a polynomial of order $2m-2$ hence $a,b$ must be polynomials of order $m-1$. And they cannot have zeros in any $p_i$ since otherwise $ (z-p_i)^2\phi_3(z) = \frac{ab}{\varphi_i^2}$ would converge to $0$ contradicting $(v_i)_3 \neq 0$. 
Now it is straight forward that the conditions $(b_i)$ and $(b_1)$ translate to 
\begin{itemize}
\item[$(b_i^*)$] $\left(\frac{a^2}{\varphi_i^2}\right)'(p_i) =0$ which is equivalent to 
\begin{equation*}\label{eq:condition for polynomials at pi} \frac{a'(p_i)}{\varphi_i(p_i)} - \frac{a(p_i)}{\varphi_i(p_i)} \frac{\varphi_i'(p_i)}{\varphi_i(p_i)}=0 \end{equation*}
\item[$(b_1^*)$] $\lim_{z \to \infty} z^2 \left(\frac{a^2}{\varphi_1^2}\right)'(z) =0$ which is equivalent to
\begin{equation*}\label{eq:condition for polynomials at p0}\lim_{z \to \infty} z^2 \left(\frac{a'(z)}{\varphi_1(z)} - \frac{a(z)}{\varphi_1(z)} \frac{\varphi_1'(z)}{\varphi_1(z)}\right)=0, \end{equation*}
\end{itemize}
analogously for the polynomial $b$.

Recall that the meromorphic function $g$ in the Weierstra\ss\ data coincides with the stereographic projection of the Gau\ss\ map. From the computation above we get that $g=\frac{a}{b}$. Since we assume that all ends have a normal perpendicular to $e_3$, we have that
\begin{itemize}
\item[$(c_i )$] $\abs{g(p_i)}=1$ or equivalently $\abs{a(p_i)}= \abs{b(p_i)}$ for all $i\ge2$;
\item[$(c_1) $] $\lim_{z \to \infty} \abs{g(z)} =1$ or equivalently $\lim_{z \to \infty} \frac{ \abs{a(z)}}{\abs{b(z)}}=1$.
\end{itemize}
Note that this is the first time we use that the dimension of the span of the normals at the ends is two-dimensional.\\[0.2cm]
It is convenient to expand the polynomials $a$,$b$ in the basis of $\varphi_i$, $i=1, \dotsc, m$ i.e.
\[ a= \sum_{i=2}^{m} a_i \varphi_i + a_1 \varphi_1, \quad b= \sum_{i=2}^{m} b_i \varphi_i + b_1 \varphi_1.
\]
One checks that 
\[ \varphi_1'(p_i) = \varphi_i(p_i), \quad \varphi_i'(p_j)= \begin{cases} \frac{ \varphi_j(p_j)}{p_j - p_i} &\text{ for } j \neq i \\
\varphi_i(p_i) \left(\sum_{l \neq i} \frac{1}{p_i - p_l}\right) & \text{ for } i = j. \end{cases}\]
Using this in ($b_i^*$) we obtain 
\begin{equation}\label{eq:condition for coefficients at pi}
0=\sum_{l \neq i} a_l \frac{\varphi_l'(p_i)}{\varphi_i(p_i)} = \sum_{l \neq i} \frac{a_l}{p_i-p_l} 
\end{equation}
To transform condition ($b_1^*$) observe that $\varphi_i$ for $i\ge 2$ are polynomials of order $m-2$ hence $\lim_{z \to \infty} \frac{z^2 \varphi_i'(z)}{\varphi_1(z)} = m-2$ and $\lim_{z \to \infty} \frac{z \varphi'_1(z)}{\varphi_1(z)} \frac{z \varphi_i(z)}{\varphi_1(z)} = m-1 $. Thus, ($b_1^*$) transforms to 
\begin{equation}\label{eq:condition for coefficients at p0}
0=\lim_{z \to \infty} \sum_{i\ge 2} a_i \left( \frac{z^2 \varphi_i'(z)}{\varphi_1(z)} - \frac{z \varphi'_1(z)}{\varphi_1(z)} \frac{z \varphi_i(z)}{\varphi_1(z)} \right)  = - \sum_{i \ge 2 } a_i.
\end{equation}
Comparing this to the statement of the lemma we recognize that the matrix $M$ will give us the Weierstra\ss\ representation with the desired properties.
 \end{proof}

\begin{lemma} \label{lem:M2}
 Let $M \in \C^{m\times m}$ be the matrix from Lemma~\ref{lem:M} corresponding to points $p_2,...,p_m\in\C$. Let $m$ be even and $\dim \operatorname{kernel}(M) = 2$. Choose the order of the points $p_2,...,p_m$ in such a way that 
 $$M=\begin{pmatrix}
A & C\\
- C^T & B                                                                                                                                                                                                                                                                                                                                                                                                                                                                                                                     \end{pmatrix},
$$
where $A\in \C^{m-2\times m-2}$ is invertible. Then we have the following representation of the kernel of $M$: 
\begin{align}\label{kernelformula}
\operatorname{kernel} (M)&=\operatorname{span} \left\{ 
 \begin{pmatrix}
  - A^{-1}C e_l \\
  e_l
 \end{pmatrix}:
l=1,2 \right\} =:\{v_l: l=1,2\}.
\end{align}
If there are $j_0\in\{1,...,m-2\}$ and $j_1\in\{1,...,m-2\}$ such that $ \bar v_1^{j_0} v_2^{j_0}\in \R\setminus\{0\}$ and $ \bar v_1^{j_1} v_2^{j_1}\in i\R\setminus\{0\}$ then there do not exist two linearly independent vectors $\vec a, \vec b$ in the kernel such that $|a_j|=|b_j|$ for all $j=1,...,m$.
\end{lemma}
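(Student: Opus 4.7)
My plan is a proof by contradiction. I would first record the structural feature of the kernel basis given in \eqref{kernelformula} that drives everything: the last two components of $v_1$ are $(1,0)^T$ and those of $v_2$ are $(0,1)^T$. Verifying \eqref{kernelformula} itself is immediate, since both displayed columns lie in $\ker M$ (use $A(-A^{-1}Ce_l) + Ce_l = 0$), are obviously linearly independent, and $\dim \ker M = 2$ by hypothesis.

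Assume for contradiction that there exist linearly independent $\vec a, \vec b \in \ker M$ with $|a_j|=|b_j|$ for every $j$. Writing $\vec a = \alpha_1 v_1 + \alpha_2 v_2$ and $\vec b = \beta_1 v_1 + \beta_2 v_2$, the identity $|a_j|^2 = |b_j|^2$ expands to
\begin{align*}
\lambda_1\, |v_1^j|^2 + \lambda_2\, |v_2^j|^2 + 2\,\Re\!\bigl(\mu\, v_1^j\, \overline{v_2^j}\bigr) = 0 \qquad \text{for every } j,
\end{align*}
where $\lambda_k := |\alpha_k|^2 - |\beta_k|^2 \in \R$ and $\mu := \alpha_1\overline{\alpha_2} - \beta_1\overline{\beta_2} \in \C$. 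My strategy is to force these three real scalars to vanish and to derive a contradiction from there.

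The tail structure of $v_1, v_2$ handles $\lambda_1$ and $\lambda_2$ for free: evaluating at $j=m-1$ leaves only the term $\lambda_1$, and evaluating at $j=m$ leaves only the term $\lambda_2$. With $\lambda_1=\lambda_2=0$, the identity reduces to $\Re\!\bigl(\mu\, v_1^j\, \overline{v_2^j}\bigr) = 0$ for all $j$. The hypotheses on $j_0$ and $j_1$ are now used to separate real and imaginary part of $\mu$: at $j=j_0$ the scalar $v_1^{j_0}\,\overline{v_2^{j_0}}$ is a nonzero real, so the surviving equation reads $c\,\Re(\mu)=0$ with $c\ne 0$; at $j=j_1$ that scalar is a nonzero imaginary, yielding $c'\,\Im(\mu)=0$ with $c'\ne 0$. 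Hence $\mu = 0$.

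It remains to deduce linear dependence of $\vec a, \vec b$ from the three vanishing conditions $|\alpha_k|=|\beta_k|$ ($k=1,2$) and $\alpha_1\overline{\alpha_2} = \beta_1\overline{\beta_2}$. A short case split does it: if some $\alpha_k$ vanishes the corresponding $\beta_k$ vanishes too and both vectors are proportional to the remaining $v_l$; otherwise writing $\beta_k = e^{i\theta_k}\alpha_k$ and using the cross-relation forces $\theta_1 = \theta_2$, so $\vec b$ is a unit-modulus scalar multiple of $\vec a$. Either way we contradict linear independence. The argument is essentially bookkeeping; the only genuinely pivotal observation is the tail structure of the basis $v_1, v_2$, which makes the system of equations indexed by $j$ immediately separable into the three scalar unknowns.
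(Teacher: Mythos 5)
Your contradiction argument is correct and follows essentially the same route as the paper: use the tail components of $v_1,v_2$ to equate the moduli of the coefficients, then use the rows $j_0$ and $j_1$ to separate the real and imaginary parts of the remaining cross term. Your bookkeeping with the scalars $\lambda_1,\lambda_2$ and $\mu=\alpha_1\overline{\alpha_2}-\beta_1\overline{\beta_2}$ is in fact slightly cleaner than the paper's normalization ($\alpha^1,\beta^1\in\R$, case split $\alpha^1=\pm\beta^1$), and it transparently covers the degenerate case $\alpha_1=0$, where the paper's factor $2\alpha^1$ in front of the real part makes its step vacuous. (Note that the hypothesis is stated for $\overline{v_1^{j}}\,v_2^{j}$ while you work with $v_1^{j}\,\overline{v_2^{j}}$; these are conjugates, so ``real nonzero'' and ``purely imaginary nonzero'' are preserved and nothing is lost.)

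The gap is in your one-line verification of \eqref{kernelformula}, which is itself part of the lemma's conclusion. The computation $A(-A^{-1}Ce_l)+Ce_l=0$ only checks the \emph{first} block row of $Mv_l=0$; the second block row requires
\[ -C^T(-A^{-1}Ce_l)+Be_l=(C^TA^{-1}C+B)e_l=0 ,\]
i.e.\ the identity $B=-C^TA^{-1}C$, which is not automatic and is precisely the content the paper proves before writing down the basis. Two short ways to close it: (i) the paper's route --- any kernel vector $(v,w)$ satisfies $v=-A^{-1}Cw$ by the first block row and hence $(C^TA^{-1}C+B)w=0$; since $C^TA^{-1}C+B$ is a $2\times2$ skew-symmetric matrix with nontrivial kernel, it must vanish identically; or (ii) observe that the projection of $\ker M$ onto the last two coordinates is injective (if $w=0$ then $v=-A^{-1}Cw=0$), so by $\dim\ker M=2$ it is onto $\C^2$, and for each $l$ the unique kernel vector with tail $e_l$ has head $-A^{-1}Ce_l$ by the first block row. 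Either fix is a few lines, but as written your claim that the displayed columns lie in $\ker M$ is unjustified, and it is the only part of the kernel formula that genuinely needs an argument.
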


\begin{proof}
 As the kernel is nontrivial, there is a vector $\begin{pmatrix}
 v\\w
 \end{pmatrix}\neq \vec 0
$ such that $A v + Cw=\vec 0$ and $-C^T v + B w=\vec 0$. Note that $w\neq \vec 0$ because then $v=0$ would follow from the invertibility of $A$. We use that $A$ is of full rank and get $v= -A^{-1} C w $ and thus $(C^T A^{-1} C + B)w=0$. The matrix $C^T A^{-1}C + B$ is a $2\times 2$ skew-symmetric matrix. Since this matrix has a nontrivial element in its kernel and as the eigenvalues of skew-symmetric matrices come in pairs, we get $B=-C^T A^{-1} C $.\\
We choose $w_l=e_l$, $l=1,2$. Then the following vectors form a basis of the kernel of $M$:
\begin{align*}
\operatorname{kernel} (M)&=\operatorname{span} \left\{ 
 \begin{pmatrix}
  - A^{-1}C e_l \\
  e_l
 \end{pmatrix}:
l=1,2 \right\} =:\{v_l: l=1,2\}
\end{align*}
as claimed in the statement of the lemma.\\
We want to compare two linearly independent vectors in the kernel of $M$. So let $\vec a= \alpha^1 v_1 + \alpha^2 v_2$ and $\vec b= \beta^1 v_1 + \beta^2 v_2$ be two such vectors in the kernel. The necessary condition in order to create a surface where the dimension of the span of the normals of the ends are two-dimensional are $|a_j|=|b_j|$ for $j=1,...,m$. The form of $v_l$ in the last two components yields $|\alpha^l| =|\beta^l|$ for $l=1,2$. By multiplying with constants we can assume that $\alpha^1,\beta^1 \in\R$ which implies $\alpha^1=\pm\beta^1$. Let us assume $\alpha^1=\beta^1$.  We compute for $j=1,...,m-2$
\begin{align*}
 |a_j|^2 & = |\alpha^1|^2 |v_1^j|^2 + |\alpha^2|^2|v_2^j|^2 + 2 \Re(\bar\alpha^1\alpha^2 \bar v_1^j v_2^j)\\
 &= |\beta^1|^2 |v_1^j|^2 + |\beta^2|^2|v_2^j|^2 + 2 \Re(\bar\beta^1\beta^2 \bar v_1^j v_2^j) +2 \Re\left(\left(\bar\alpha^1\alpha^2 - \bar\beta^1\beta^2\right) \bar v_1^j v_2^j\right)\\
 & = |b_j|^2+ 2 \alpha^1\Re\left(\left(\alpha^2 - \beta^2\right) \bar v_1^j v_2^j\right).
\end{align*}
This implies $\Re\left(\left(\alpha^2 - \beta^2\right) \bar v_1^j v_2^j\right)=0$. 
If for $j_0$ there in an entry $v_1^{j_0}=0$ or $v_2^{j_0}=0$ then this is satisfied. So we consider all $j\in\{1,...,m-2\}$ such that $v_1^{j}\neq 0$ and $v_2^{j}\neq0$. \\
 Computing 
 \begin{align*}
 0&=\Re\left(\left(\alpha^2 - \beta^2\right) \bar v_1^{j_0} v_2^{j_0}\right)\\
 &=  v_1^{j_0} v_2^{j_0} \Re\left(\alpha^2 - \beta^2\right)\text{ and}\\
 0&= \Re\left(\left(\alpha^2 - \beta^2\right) \bar v_1^{j_1} v_2^{j_1}\right)\\
 &= \Im \left( \bar v_1^{j_1} v_2^{j_1}\right) \Im \left(\alpha^2 - \beta^2\right)
\end{align*}
implies that $\alpha^2 = \beta^2$ which contradicts the fact that $\vec a\neq\vec b$ taking into account $\alpha^1 = \beta^1$.\\
If $\alpha^1=-\beta^1$, we redo the computation above and get $\alpha^2 = - \beta^2$. Again, this contradicts the assumption of $\vec a$ and $\vec b $ beeing linearly independent.
\end{proof}

We use this for the proof of Theorem~\ref{thm:forspheres}
\begin{proof} \textit{Theorem \ref{thm:forspheres}}\\
We already know from Proposition~\ref{prop:allcodim2} that the dimension of the normals at the ends is either two or three. We show that the dimension of the normals at the ends cannot be three. We assume that this dimension is two. As we invert the Willmore surface at $\Psi(p_i)$ we get that $n_{X}(p_i)= n_{\Psi}(p_i)$ for all $ i=1,...,m$. Therefore, we consider now a complete minimal immersion in $\R^3$ with $m$ embedded planar ends where $\dim\Span\{n_{X}(p_1),..., n_{X}(p_m)\}=2$ and we will get a contradiction.\\
By applying a conformal transformation of $\mathcal S^2$ we can assume that $p_2=0$ and $p_3=1$. The matrix $M$ from the lemmas above then has the following form
\begin{align*}
 M^4=\begin{pmatrix}
    0 & -1 & -1 & -1\\
    1& 0 & -1 & -\frac{1}{t}\\
    1 & 1& 0 & \frac{1}{1-t}\\
    1 & \frac{1}{t} & \frac{1}{t-1}& 0
   \end{pmatrix},
\end{align*}
where $t$ is a complex parameter that has to be determined. The determinant of $M^4$ can be computed by the Pfaffian:\\
\begin{align*}
 \det (M^4) = \left(\operatorname{Pfaff}(M^4)\right)^2 = \left( -\frac{1}{1-t} - \frac{1}{t} + 1\right)^2 = \frac{(t^2 - t + 1)^2}{t^2 (1-t)^2}.
\end{align*}
So the parameter $t$ is determined by the equation $t^2 - t + 1=0$ which has the two solutions $t_{1,2}=\frac{1}{2}\pm i \frac{\sqrt{3}}{2}$. As computed above, the kernel of $M$ is spanned by the following two vectors
\begin{align*}
 v_l= \begin{pmatrix}
       - A^{-1} C e_l\\
       e_l
      \end{pmatrix}, \text{ where } A=\begin{pmatrix}
      0 & -1\\
      1 & 0\end{pmatrix} \text{ and } C=\begin{pmatrix}
      -1& -1\\
      -1 & -\frac{1}{t}
      \end{pmatrix},
      \end{align*}
which are $v_1 = (1,-1,1,0)$ and $v_2=(\frac{1}{t},-1,0,1)$ with $t\in i\R$. This implies $\bar v_1^1 v_2^1 \in i\R\setminus\{0\}$ and $\bar v_1^2 v_2^2 \in \R\setminus\{0\}$. By Lemma~\ref{lem:M2} the claim follows.
\end{proof}

\begin{remark}\begin{itemize}
  \item There is another way of proving Theorem~\ref{thm:forspheres}: Montiel and Ros proved in \cite[Corollary~16]{MontielRos} that if the branching values of the ramification points of the meromorphic map $g:\Sigma\to \Sp^3$ (from the Weierstr\ss\ representation) lie in an equator of $\Sp^2$ then the minimal surface must have at least one logarithmic end. We have shown in the proof of Lemma~\ref{lem:M} that the meromorphic function $g$ has degree $d=m-1$ which is $3$ here. Therefore, there are $2d-2=4$ ramifcation points of $g$ and $4$ ends at $p_1,...,p_4$, see \cite{MontielRos}.
 From the condition $ \frac{a'(p_i)}{\varphi_i(p_i)} - \frac{a(p_i)}{\varphi_i(p_i)} \frac{\varphi_i'(p_i)}{\varphi_i(p_i)}=0$ (which is condition $(b_i^*)$) and the corresponding equation $ \frac{b'(p_i)}{\varphi_i(p_i)} - \frac{b(p_i)}{\varphi_i(p_i)} \frac{\varphi_i'(p_i)}{\varphi_i(p_i)}=0$ we see that $0 = a'(p_i) b(p_i) - a(p_i) b'(p_i)$. As $g = \frac{a}{b}$ we get that every $p_i$ must be a ramification point. Comparing the number of ends and the number of ramification points we get for $4$ ends that the set of ramification points and the set $\{p_1,...,p_4\}$ are equal. Thus, the condition of Montiel and Ros that the branching values of the ramification points lie in an equator is equivalent  that the normals at the ends lie in an equator. By their work  \cite[Corollary~16]{MontielRos} we know that this cannot be the case for a surface without logarithmic ends.\\
 Unfortunately, with this approach we cannot treat more than $4$ ends. The number or ramifications is in general bigger than the number of ends. On the other hand, our approach above seems to require more knowledge about the kernel of the skew-symmetric matrix $M$ in order to deal with more than $4$ ends. It would be very interesting to know whether there is a minimal sphere with $m$ embedded planar ends such that the normals at the ends lie in an equator of $\Sp^2$. We know from the work of Montiel and Ros that in such a situation there must be ramification points that are not ends and their values do not lie in that equator. \\
 The authors think that it is not likely that there is an immersed minimal sphere with embedded planar ends where the ends lie in an equator.\\
  \textbf{Conjecture:} The span of the normals of the ends of an immersed complete minimal surface with embedded planar ends in $\R^3$ is always three-dimensional.
 \item After finishing the proof of Lemma~\ref{lem:M} and Lemma~\ref{lem:M2} the authors learned that the method of studying the skew-symmetric matrix $M$ was already used in the preprint \cite[Proof of Theorem~15]{KusnerSchmitt}.
 \end{itemize}
 \end{remark}

\begin{proposition}\label{prop:symm}
Let $\Psi:\Sigma \to\R^3$  be a closed Willmore immersion such that $X:= \frac{\Psi}{|\Psi|^2}:\Sigma\setminus\{p_1,..., p_m\} \to \R^n$ is a complete, immersed minimal surface with $m$ embedded planar ends, $m = 2p$ for a $p\in \N$. Assume furthermore that $\Psi$ has an orientation reversing $2p$-fold rotational symmetry around an axis of symmetry going trough $0=\Psi(p_i)=\Psi(p_j)$. We assume further that the span of the normals at the ends is not $2$-dimensional (which is shown above for $\Sigma=\mathcal{S}^2$ and $m=4$). Under these conditions the surface $\Psi$ is stable under variations that preserve the given symmetry.
\end{proposition}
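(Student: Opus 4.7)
The plan is to combine the Li--Yau trick from the proof of Theorem~\ref{thm:leqm} with the symmetry, using translations along the symmetry axis as the correcting Jacobi field: for a symmetry-preserving variation $\vec v$, I will subtract $a^\perp$ with $a$ parallel to the axis, chosen so that $\vec v-a^\perp$ vanishes at every end $p_i$, and then conclude by Li--Yau together with the fact that $a^\perp$ is a $\W$-Jacobi field.

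First I would set up the symmetry. Let $e$ be the unit vector along the axis and $\rho$ the rotation by $\pi/p$ around $e$ realizing the $2p$-fold symmetry, with associated diffeomorphism $\sigma:\Sigma\to\Sigma$ satisfying $\rho\circ\Psi=\Psi\circ\sigma$. Since $\sigma$ reverses surface orientation one checks that $\rho(n_\Psi(p))=-n_\Psi(\sigma(p))$. In the generic situation (the single case needed, e.g.\ for the Morin surface), the cyclic action of $\sigma$ on $\{p_1,\ldots,p_m\}$ is a single orbit of size $m=2p$; non-generic orbit structures can be handled orbit-by-orbit with the same ingredients. After renumbering I may assume $\sigma(p_i)=p_{i+1}$ and, writing $n_\Psi(p_1)=\alpha e+v$ with $v\perp e$, iteration yields
\[ n_\Psi(p_{j+1})=(-1)^j(\alpha e+\rho^j v),\qquad j=0,\ldots,m-1. \]
By Proposition~\ref{prop:allcodim2} in codimension one we have $d\ge 2$; the hypothesis $d\neq 2$ then gives $d=3$, which forces $\alpha\neq 0$, because otherwise every $n_\Psi(p_{j+1})$ would lie in the $2$-plane $e^\perp$, contradicting $d=3$.

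Next, given a smooth symmetric normal variation $\vec v=u\,n_\Psi$, the condition $\rho\vec v(p)=\vec v(\sigma(p))$ combined with $\rho(n_\Psi(p_i))=-n_\Psi(p_{i+1})$ forces $u(p_{i+1})=-u(p_i)$, hence $u(p_i)=(-1)^{i-1}u(p_1)$. I set $a:=(u(p_1)/\alpha)\,e$, which is fixed by $\rho$, and compute
\[ a\cdot n_\Psi(p_{j+1})=(-1)^j\bigl(a\cdot(\alpha e+\rho^j v)\bigr)=(-1)^j u(p_1)=u(p_{j+1}), \]
so that $\vec w:=\vec v-a^\perp$, with $a^\perp:=(a\cdot n_\Psi)\,n_\Psi$ the normal projection of the constant vector field $a$, vanishes at every $p_i$. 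The perturbed immersion $\Psi+t\vec w$ then still has a point of multiplicity $m$ at $0$, so the Li--Yau inequality \cite{LiYau} gives $\W(\Psi+t\vec w)\ge 4\pi m=\W(\Psi)$ for $|t|$ small, and hence $\delta^2\W(\Psi)(\vec w,\vec w)\ge 0$. Since translations leave $\W$ invariant, $a^\perp$ is a $\W$-Jacobi field, so $\delta^2\W(\Psi)(\vec v,a^\perp)=0$ and therefore
\[ \delta^2\W(\Psi)(\vec v,\vec v)=\delta^2\W(\Psi)(\vec w,\vec w)\ge 0, \]
which is the desired stability.

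The main delicate point will be the implication $d=3\Rightarrow\alpha\neq 0$: this is precisely where the hypothesis that the normals do not lie in a common plane enters, since it is what makes the axis-parallel correction $a$ solvable against the symmetry-forced boundary values $u(p_i)=(-1)^{i-1}u(p_1)$. The reason $a$ must be taken along the axis is that $\rho$-invariance is needed to ensure $a\cdot\rho^j v=a\cdot v$ for all $j$; any component of $a$ perpendicular to $e$ would fail this, so the only degree of freedom is the axial component, which exists exactly when $\alpha\ne 0$.
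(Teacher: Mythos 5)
Your proof is correct and follows essentially the same route as the paper: you derive $u(p_{i+1})=-u(p_i)$ from the orientation-reversing symmetry, subtract the translational Jacobi field along the axis (your $a^\perp$ with $a=(u(p_1)/\alpha)e$ is exactly the paper's $\frac{v(p_1)}{j(p_1)}\,j\,n_\Psi$ with $j=n_\Psi\cdot e_3$), and conclude via the Li--Yau argument of Theorem~\ref{thm:leqm}. Your explicit observation that $d=3$ forces the axial component $\alpha\neq 0$ is the same point the paper makes when it notes $j(p_1)\neq 0$ because the normals cannot all lie in an equator.
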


\begin{remark}
\begin{itemize}
 \item  There exist Willmore spheres with $m=2p$ for each even $p\geq 2$ which have $2p$-fold orientation reversing symmetry and the $m$-fold point on the axis, see \cite{Kusner, Francis95, Francis97}. The most famous one is the Morin surface where $p=2$.
 \item For the Morin surface, the statement of Proposition~\ref{prop:symm} was conjectured in \cite[Section~1]{Francis97}.
\end{itemize}

\end{remark}

\begin{proof}
We have seen in the proof of Theorem~\ref{thm:allcodim} that substracting $\W$-Jacobi fields from a given variational direction does not change the second variation of $\Psi$ into that direction. And we know from the proof of Theorem~\ref{thm:leqm} that $\Psi$ is stable under variations satisfying $\frac{d}{dt}|_{t=0}\Psi_t (p_i) =\vec v(p_i) = 0$ for all $i=1,...,m$. We show now that in the given situation we can always find a Jacobi field with $\vec j(p_i)= \vec v(p_i)$ for all $i=1,...,m$ which we will then substract from the given variation in order to see that the second variation is always non-negative.\\
 Let $S\in SO(3)$ be the matrix correspondig to the rotation by $\frac{2\pi}{m}$ around an axis in $\R^3$ which we can assume to be spanned by $e_3$. In the given situation we have a diffeomorphism $s:\Sigma \to\Sigma$ such that
 \begin{align} \label{eq:3}
  S(\Psi(p)) = \Psi(s(p)) \ \ \forall p\in \Sigma\ \ \text{ and } s(p_i) = p_{i+1} 
 \end{align}
after possibly reordering the points $p_i$. We use the convention $p_{m+1}: = p_1$. 
 After each rotation by $\frac{2\pi}{m}$ the normal changes sign. This means that $n_{\Psi}(p_{i+1}) = - S n_{\Psi}(p_i)$, where $n_\Psi$ denotes a unit normal along $\Psi$.\\ 
 Now let $\vec v$ be a vector and $\Psi_t$ a smooth variation with $\frac{d}{dt}\big|_{t=0}\Psi_t = \vec v$ that preserves the given symmetry. Using (\ref{eq:3}) for $\Psi_t$, differentiating with respect to $t$ and projecting onto the normal part we get that 
 \begin{align*}
  S\vec v(p)^{\perp_{s(p)}} = \vec v (s(p))^{\perp_{s(p)}}.
 \end{align*}
 We use the notation $v(p) : =  \vec v(p) \cdot n_{\Psi}$ and compute
 \begin{align*}
  v(p_{i+1}) &=  \vec v(p_{i+1}) \cdot n_{\Psi}(p_{i+1})=   S\vec v(p_i) \cdot  n_{\Psi}(p_{i+1})\\
  &=  - \vec v (p_{i})\cdot n_{\Psi}(p_{i}) = - v(p_i).
 \end{align*}
Since the Willmore surface $\Psi$ is invariant under translations we know that $j(p): =  n_{\psi}(p)\cdot e_3$ is a $\W$-Jacobi field, i.e.\ $\delta^2\W(\Psi)(j n_{\Psi}, jn_{\Psi}) = 0$. By the same reasoning as above  and since $e_3$ is the axis of the symmetry we get that $-j(p_i) = j(p_{i+1})$ for all $i=1,...,m$. For an arbitrary symmetry preserving variational direction $\vec v = vn_{\Psi}$ we substract the Jacobi field $J(p) n_{\Psi}: =  \frac{v(p_1)}{j(p_1)} j (p) n_{\Psi}(p)$ from $\vec v$ and see that
\begin{align*}
 \delta^2 \W(\Psi)(\vec v, \vec v) = \delta^2\W(\Psi)(\vec v - J n_{\Psi}, \vec v - J n_{\psi})\geq 0 
\end{align*}
because $ v(p_i) - J(p_i)  = 0$ for all $i=1,...,m$ (as in the proof of Theorem~\ref{thm:allcodim}).\\
Note that we have used here that $j(p_1) \neq 0$. For $m=4$ and $\Sigma =\mathcal{S}^2$, this follows from the fact that the normals cannot lie in an equator (proof of theorem Theorem~\ref{thm:forspheres}). For $m>4$ and $\Sigma =\mathcal{S}^2$ or for $\Sigma \neq \mathcal{S}^2$ we needed to assume this property. We think that in general the normals do not lie in an equator. But we cannot show that at the moment.
\end{proof} 

\begin{corollary} \label{cor:Morin}
 The Morin surface is stable under variations that preserve the four-fold, orientation reversing rotational symmetry.
\end{corollary}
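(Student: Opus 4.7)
The plan is to verify that the Morin surface satisfies all hypotheses of Proposition~\ref{prop:symm} in the special case $m=4$ (so $p=2$) and $\Sigma=\Szw$, after which the corollary follows immediately.

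First I would recall the structural facts about the Morin surface from the introductory discussion and Kusner's explicit construction \cite{Kusner}: the Morin surface is a closed Willmore sphere $\Psi:\Szw\to\R^3$ arising by inversion from a complete minimal sphere with four embedded planar ends, and it carries a $4$-fold orientation reversing rotational symmetry. After rotating so that the axis of symmetry coincides with the $x_3$-axis (as in the introduction), the unique $4$-fold point of $\Psi$ is the origin, so $\Psi(p_1)=\dots=\Psi(p_4)=0$ lies on the axis. This is exactly the geometric configuration required in Proposition~\ref{prop:symm}.

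Next I would check the only nontrivial side hypothesis of Proposition~\ref{prop:symm}, namely that the span of the normals at the ends satisfies $\dim\Span\{n_\Psi(p_i): i=1,\dots,4\}\neq 2$. This is exactly the content of Theorem~\ref{thm:forspheres}, which shows that for every Willmore sphere obtained from a complete minimal sphere with four embedded planar ends the span of the normals at the ends is three-dimensional and cannot drop to two. Since the Morin surface falls into this class, the hypothesis is automatic.

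With these two ingredients in place, Proposition~\ref{prop:symm} applies verbatim and yields the claimed stability under variations preserving the $4$-fold orientation reversing symmetry. There is essentially no obstacle: the corollary is a direct specialization of Proposition~\ref{prop:symm}, and the only nontrivial input (the exclusion of the two-dimensional case for the normals at the ends, which is required so that the tangential Jacobi field $j(p)=n_\Psi(p)\cdot e_3$ used in the proof of Proposition~\ref{prop:symm} is nonzero at the $m$-fold point) is supplied by Theorem~\ref{thm:forspheres}.
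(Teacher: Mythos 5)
Your proposal is correct and follows essentially the same route as the paper: both reduce the corollary to Proposition~\ref{prop:symm} and supply the one nontrivial hypothesis (that the normals at the four ends do not lie in a plane) via the four-end analysis of Theorem~\ref{thm:forspheres} (the paper's proof cites Theorem~\ref{thm:allcodim} here, which appears to be a typo for Theorem~\ref{thm:forspheres}, and additionally remarks that the normals can be computed explicitly from Kusner's parametrization to span a regular tetrahedron). Your identification of the correct supporting theorem and your explicit check that the quadruple point lies on the symmetry axis are both accurate.
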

\begin{proof}
 We have shown in the proof of Theorem~\ref{thm:allcodim} that for all minimal spheres with four ends the span of the normals at the ends is three-dimensional. We can, of course, compute that for the Morin surface explicitly with the parametrization found in \cite{Kusner}. It turns out that the normals of the Morin surface span a regular tetrahedron.
\end{proof}

\section{Expansion of the second variation of the Gauss curvature} \label{section3}
Parts of the following estimates in this section are already contained in \cite{Michelat}, where at least $C^2$ regularity of the normal variation on the Willmore surface is assumed. To close our argument we need to consider vectorfields with less regularity towards the ends. Hence we decided to present all needed estimate in a concise form.\\[-0.2cm]

Let us fix local conformal coordinates in a neighbourhood $D(p_i)$ around an end $p_i$ in $\Sigma$ such that $z(p_i)=0$. We may scale the coordinates such that we have 
\[ X_z \, dz =\left( - \frac{a}{z^2} +  Y(z) \right) \, dz \]
with $a \in \C^3, a^2 =0 , \abs{a}^2=2$ and $Y(z)$ is holomorphic and bounded (since the end is embedded and not logarithmic).
In particular this implies that 
\begin{align}\begin{split}
\label{eq:expansion of X_z} \frac12 \abs{X_z}^2 &= \abs{z}^{-4}( 1 - \Re(Y(z)\cdot \bar{a} z^2)+ \frac12\abs{z}^4 \abs{Y(z)}^2)= \abs{z}^{-4}(1+b(z)) \\ \abs{X(z)}^2 &= \abs{z}^{-2}(1+ c(z)) \end{split} \end{align}
where $b,c$ are smooth functions satisfying 
 \begin{align*} \sup_{\abs{z}<\epsilon_0}\left( \abs{z^{-2}b(z)}+ \abs{z^{-1}Db(z)} + \abs{D^2b(z)}\right) &\le C\\
 \sup_{\abs{z}<\epsilon_0} \left(\abs{z^{-1}c(z)}+ \abs{Dc(z)}+\abs{D^2c(z)}\right)&\le C 
 \end{align*}
Furthermore, we set $D_\epsilon(p_i)$ to be the preimage of the $\epsilon$ ball around $0$ in these coordinates, i.e.\ $D_\epsilon(p_i) =z^{-1}(B_\epsilon), B_\epsilon \subset \R^2$. We note that the expansion for $\frac12 \abs{X_z}^2$ can be used to estimate $K$ at an end as follows. Since $z$ are conformal coordinates we have 
\begin{align*} K \,\frac12 \abs{X_z}^2 &= -\frac12 \Delta\ln\left(\frac12 \abs{X_z}^2\right)= 2 \Delta \ln(\abs{z}) - \frac12\Delta\ln(1+b(z))\\&=- \frac{\Delta b(z)}{2+2b(z)} +  \frac{ \abs{\nabla b(z)}^2}{2(1+b(z))^2}=O(z^2); \end{align*}
where we used that $\Delta b(z) = \frac12 \Delta(\abs{z}^4  \abs{Y(z)}^2) = O(z^2)$ since $Y(z)\cdot \bar{a} z^2$ is holomorphic and $\nabla b(z) = O(z)$. In summary we conclude that 
\begin{align} K(z) = O(z^6), 
\label{eq:expansion Gauss curvature} 
\end{align}
which was already shown in \cite[Section~4.3]{Michelat}. 
The second variation of the Gau\ss\ curvature, \cite[Lemma 3.2]{Michelat} for a $w \in C^{2}(\Sigma\setminus \bigcup_i D_\epsilon(p_i))$ is given by 
\begin{align*} \frac{d^2}{dt^2} K_{g_t} \, d\operatorname{vol}_{g_t} = d\left(\Delta_gw \star dw - \frac{1}{2} \star d\abs{dw}_g^2\right) + d\left(2 K_g \star dw \right)
\end{align*}
Applying Stokes theorem on the set $\Sigma \setminus \bigcup_{i=1}^m D_\epsilon(p_i)$ we are left with estimating
\begin{align*} &\sum_{i=1}^m \int_{\partial D_\epsilon(p_i)} \Delta_gw  \frac{\partial w}{\partial \nu} - \frac{1}{2} \frac{\partial}{\partial \nu} \left(\abs{dw}_g^2\right) + \int_{\partial D_\epsilon(p_i)} 2K \frac{\partial w}{\partial \nu}\\
&= \sum_{i=1}^m I_i(w,w) + II_i(w). \end{align*}

This is summarised in the following lemma. 
\begin{lemma}\label{lem.expansion at an end}
Let $p$ be a planar embedded end of $\Sigma$ and let $w$ have an expansion of the form
\begin{equation}\label{eq:expansion w}w(z)= v \abs{X(z)}^2 + {\Re(\frac{\alpha}{z})} + \beta \ln\abs{z} + u(z)=v \abs{X(z)}^2 + w_1 \,; \end{equation}
 in the conformal coordinates $z$, 
where $v,\beta \in \R$, $\alpha \in \C$ and $u \in C^{2,\alpha}(\overline{B}_\epsilon)$. 
Then we have that
\begin{align}\label{eq:expansion at the end}
	&\int_{\partial D_\epsilon(p)} \Delta_gw  \frac{\partial w}{\partial \nu} - \frac{1}{2} \frac{\partial}{\partial \nu} \left(\abs{dw}_g^2\right) + \int_{\partial D_\epsilon(p)} 2K \frac{\partial w}{\partial \nu}\\\nonumber &
	=2v^2 \int_{\partial D_{\epsilon}(p)} \frac{\partial \abs{X}^2}{\partial \nu} - 8\pi v \beta + O(\epsilon)\;;
\end{align}
where the constant in $O(\epsilon)$ only depends on $X, \alpha, \beta$ and $\norm{u}_{C^2(D_\epsilon)}$. 
	
\end{lemma}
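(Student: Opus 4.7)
Plan of proof. The strategy is to substitute the expansion $w = v|X|^2 + w_1$ with $w_1 = \Re(\alpha/z) + \beta\ln|z| + u$ into the three boundary integrals
\[
\int_{\partial D_\epsilon}\Delta_g w\,\partial_\nu w\,d\sigma_g,\qquad -\tfrac{1}{2}\int_{\partial D_\epsilon}\partial_\nu|dw|_g^2\,d\sigma_g,\qquad 2\int_{\partial D_\epsilon}K\,\partial_\nu w\,d\sigma_g,
\]
and to identify the joint leading behaviour as $\epsilon\to0$. Three ingredients drive the whole computation. First, since $X$ is a minimal conformal immersion in dimension two, $\Delta_g X = 0$ componentwise, whence $\Delta_g|X|^2 = 2|\nabla_g X|^2_g = 4$; moreover $\Delta_g = \lambda^{-2}\Delta$, so the Euclidean--harmonic pieces $\Re(\alpha/z)$ and $\beta\ln|z|$ are $\Delta_g$--harmonic away from the end, while $\Delta_g u = \Delta u/\lambda^2 = O(\epsilon^4)$ because $\lambda^{-2}(z)= |z|^4/(1+b(z))$. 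Consequently $\Delta_g w = 4v + O(\epsilon^4)$ on $\partial D_\epsilon$. Second, the identity $\nabla_g|X|^2 = 2X^T$ valid for any isometric immersion yields $|d|X|^2|_g^2 = 4|X^T|^2 = 4(|X|^2 - |X^\perp|^2)$; at a planar embedded end the Weierstra\ss{} vector $a$ is tangent to the asymptotic plane, so $a\cdot n_0 = 0$ and thus $X\cdot n$ together with $|X^\perp|^2$ and its radial derivative remain bounded as $z\to0$. Third, $K = O(|z|^6)$ (already computed in the paper).

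With these ingredients the first integral reduces, after substituting $\Delta_g w \approx 4v$ and splitting $\partial_\nu w = v\partial_\nu|X|^2 + \partial_\nu w_1$, to $4v^2\int\partial_\nu|X|^2\,d\sigma_g + 4v\int\partial_\nu w_1\,d\sigma_g + O(\epsilon^2)$. The $\Re(\alpha/z)$ contribution to $\int\partial_\nu w_1\,d\sigma_g$ is killed by the angular--average identity $\int_0^{2\pi}\Re(\alpha e^{-i\theta})\,d\theta = 0$; the $\beta\ln|z|$ contribution gives $\pm 2\pi\beta$ (sign dictated by the orientation of $\nu$ inherited from Stokes on $\Sigma\setminus\bigcup D_\epsilon$); and the $\partial_\nu u$ contribution is $O(\epsilon)$. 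For the second integral, I expand $|dw|_g^2 = v^2|d|X|^2|_g^2 + 2vg(\nabla_g|X|^2,\nabla_g w_1) + |dw_1|_g^2$: the first summand contributes $-2v^2\int\partial_\nu|X|^2\,d\sigma_g + O(\epsilon)$ via the minimal--surface identity and boundedness of $\partial_\nu|X^\perp|^2$; the cross term expands as $2v(2f(\theta)/r - 2\beta) + O(r)$, where $f(\theta) = \Re(\alpha e^{-i\theta})$, and its $\partial_\nu$--integral produces only $O(\epsilon)$ because the constant $-2\beta$ differentiates to zero and the $f(\theta)/r$ piece has zero angular mean; the $|dw_1|_g^2$ piece is $|\alpha|^2 + O(r)$ and contributes $O(\epsilon)$ analogously. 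The third integral is controlled trivially by $K = O(\epsilon^6)$ and $|\partial_\nu w|\, d\sigma_g = O(\epsilon^{-1})\,ds$, giving $O(\epsilon^4)$.

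Summing yields the advertised formula: the coefficient of $\int\partial_\nu|X|^2\,d\sigma_g$ is $4v^2 - 2v^2 = 2v^2$, the surviving constant is $-8\pi v\beta$, and every remaining contribution is $O(\epsilon)$ with constants depending only on $X$, $\alpha$, $\beta$ and $\|u\|_{C^2(D_\epsilon)}$. The principal obstacle is the bookkeeping: the three singular pieces of $w$ scale like $|z|^{-2}$, $|z|^{-1}$ and $\ln|z|$ respectively, and they interact with the diverging conformal factor $\lambda^2 \sim |z|^{-4}$ to generate numerous formally divergent cross terms. One has to show these divergences are tamed by the exact structure of $\lambda^{-2} = |z|^4(1+O(|z|^2))$ together with the angular cancellations $\int_0^{2\pi}e^{\pm ik\theta}\,d\theta = 0$ for $k\neq 0$, so that only the two genuine contributions---the geometric one $2v^2\int\partial_\nu|X|^2\,d\sigma_g$ and the topological one $-8\pi v\beta$---remain in the limit.
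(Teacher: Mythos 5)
Your proposal is correct and follows essentially the same route as the paper's proof: split $w=v\abs{X}^2+w_1$, use $\Delta_g\abs{X}^2=4$ and $\abs{d\abs{X}^2}_g^2=4\abs{X}^2-4\abs{X^\perp}^2$ with $\abs{X^\perp}$ controlled at a planar end, obtain $-8\pi v\beta$ from $4v\int_{\partial D_\epsilon}\partial_\nu w_1$, dispose of the $\Re(\alpha/z)$ singularities by angular cancellation, and bound the curvature term via $K=O(\abs{z}^6)$; the paper merely organizes the same computation by bilinearity in $(w_0,w_1)$ rather than by the three boundary integrals. The only (harmless) slips are that the cross term $2v\langle d\abs{X}^2,dw_1\rangle_g$ also carries an $O(1)$ contribution from the angular part $\lambda^{-1}r^{-2}\partial_\theta\abs{X}^2\,\partial_\theta w_1$ — not $O(r)$ as written, but still $O(\epsilon)$ after applying $\partial_\nu$ and integrating since its radial derivative is bounded — and the density $\abs{\partial_\nu w}\,d\sigma_g$ is $O(\epsilon^{-3})\,ds$ rather than $O(\epsilon^{-1})\,ds$, which still yields $O(\epsilon^4)$ for the $K$-term.
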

\begin{proof}
Let us write as before $\int_{\partial D_\epsilon(p)} \Delta_gw  \frac{\partial w}{\partial \nu} - \frac{1}{2} \frac{\partial}{\partial \nu} \left(\abs{dw}_g^2\right) + \int_{\partial D_\epsilon(p)} 2K \frac{\partial w}{\partial \nu} = I(w,w)+II(w,w)$.

We set $w_0(z):=v \abs{X(z)}^2$ and write $w=w_0 + w_1$.
Let us start with $II(w)$. By the above expansion of $w$ and \eqref{eq:expansion of X_z} we see that $\abs{Dw(z)}\le \frac{C}{r^3}$, $|z|=:r$. This implies that 
\[ \abs{K(z)}\abs{Dw(z)} = O(z^3) \] 
and so $II(w)= O(\epsilon^4)$. 

Let us continue with $I$. 
Due to the bilinearity of $I$ we are left with estimating $I(w_0,w_0)$, $I(w_0, w_1)$, $I(w_1,w_1)$. If $v=0$ we only have to estimate the last term, which we will show is of order $O(\epsilon)$ and we can directly conclude the lemma. In the following we will assume w.l.o.g that $v=1$ i.e. $v,\alpha, \beta, u \to 1, \frac{\alpha}{v}, \frac{\beta}{v}, \frac{u}{v}$. 
Recall that $X$ is harmonic. Therefore, we have $\Delta_g \abs{X}^2=2\abs{\nabla X}^2_g =4$. As we use conformal coordinates we get that 
\begin{align*} \abs{dw_0}^2_g &= 4\langle X, \frac{X_1}{\frac{1}{\sqrt{2}} \abs{\nabla X}} \rangle^2 +  4\langle X, \frac{X_2}{\frac{1}{\sqrt{2}} \abs{\nabla X}} \rangle^2 = 4\abs{X^T}^2 \\&= 4\abs{X}^2 - 4\abs{X^\perp}^2= 4w_0 - 4\abs{X^\perp}^2 \end{align*}
where $^T$ is the projection onto the tangent space and $^\perp$ the projection onto the normal space.
A further consequence of the choice of conformal coordinates is that for any $C^1$-function $f$ we have that
\[ \int_{\partial D_\epsilon(p)} \frac{\partial f}{\partial \nu} = \int_{\partial B_\epsilon} \sqrt{\frac12 \abs{\nabla X}^2}\,\frac{\partial f}{\partial \nu}= \int_{\partial B_\epsilon} -\partial_r f\,.\]

Hence we get that
\begin{align} \begin{split} I(w_0,w_0) &= \int_{\partial D_{\epsilon}(p_i)}  2\frac{\partial \abs{X}^2}{\partial \nu} - 2 \frac{\partial}{\partial \nu} \abs{X^\perp}^2\\
&=  2 \int_{\partial D_{\epsilon}(p_)} \frac{\partial \abs{X}^2}{\partial \nu}-4 \int_{\partial D_{\epsilon}(p)}  X\cdot N \, X \cdot\frac{\partial N}{\partial \nu} \\
&= 2 \int_{\partial D_{\epsilon}(p_i)} \frac{\partial \abs{X}^2}{\partial \nu} + O(\epsilon^2) 
\end{split} \label{eq:Computation}
\end{align}
In the second to last line we used that $X\cdot N$ is bounded and denoting by $a$ the second fundamental form of $\Sigma$
\[\abs{ X\cdot \frac{\partial N}{\partial \nu}}\le \abs{X}\abs{a}=\abs{X}\sqrt{ - K}=O(z).\]
Now let us estimate
\begin{align*}
	 I(w_1,w_0)&=\int_{\partial D_\epsilon(p)} 4 \frac{\partial w_1}{\partial \nu} + \Delta_g w_1 \frac{\partial w_0}{\partial \nu}- \frac{\partial}{\partial \nu} \left( \langle dw_0, dw_1\rangle_g \right)\\&= \int_{\partial D_\epsilon(p)} 4 \frac{\partial w_1}{\partial \nu} + III\,; 
\end{align*}
where we used that $\Delta_gw_0 = 4$. We want to show that
\[
	\int_{\partial D_\epsilon(p)}\frac{\partial w_1}{\partial \nu}= -\beta 2\pi + O(\epsilon)\text{ and } 
	 III = O(\epsilon). 
\]
By \eqref{eq:expansion w} we have that
\[ - \partial_r w_1 = \frac{1}{r}\Re(\frac{\alpha}{z}) - \frac{\beta}{r} + O(1).\]
Since $\int_{\partial D_\epsilon} \Re( \frac{a}{z})  = 0 $ we conclude the first part. \\
Using \eqref{eq:expansion of X_z} we get that
\begin{align*} -\frac{1}{\frac12\abs{X_z}^2}\frac{\partial w_0}{\partial r} &= 2 r\, \frac{1+c}{1+b} - r\,\frac{rc_r}{1+b}=2r + rd\\
\frac{1}{\frac12\abs{X_z}^2}\frac{\partial w_0}{\partial \theta}&=r^2\frac{\partial_\theta c}{1+b} 
 \end{align*}
 where due to \eqref{eq:expansion of X_z} we have $\sup_{\abs{z}<\epsilon_0} \left(\abs{z^{-1}d(z)} + \abs{Dd(z)}+\abs{D^2d(z)}\right)\le C$. Hence we have
 \begin{align*}
 III&= \int_{\partial B_\epsilon}-\frac{1}{\frac12\abs{X_z}^2}\frac{\partial w_0}{\partial r} \Delta w_1 + \frac{\partial}{\partial r} \left(\frac{1}{\frac12\abs{X_z}^2} \partial_r w_0 \partial_r w_1 +\frac{1}{r^2\frac12\abs{X_z}^2}  \partial_\theta w_0 \partial_\theta w_1 \right) \\
  &=\int_{\partial B_\epsilon}(2+d)(\partial_r(r \partial_rw_1)) + \frac{2+d}{r}\partial_{\theta\theta}w_1- \partial_r( (2+d)r \partial_r w_1)+\partial_r(\frac{\partial_\theta c}{1+b}\partial_\theta w_1)	\\
 &=\int_{\partial B_\epsilon}\frac{2+d}{r}\partial_{\theta\theta}w_1+ \frac{\partial}{\partial r}\left(\frac{\partial_\theta c}{1+b}\right)\partial_\theta w_1 + \frac{\partial_\theta c}{1+b} \partial_{r\theta}w_1 - \partial_rd \,r \partial_r w_1
\end{align*}
Let us start with the last term. Since we have $-r \partial_r w_1 = \Re(\frac{\alpha}{z}) - \beta +O(z)$ and $\partial_rd= \partial_rd(0)+ O(r)$ we conclude $-\partial_rd\, r \partial_r w_1= \partial_rd(0)\Re(\frac{\alpha}{z}) + O(1)$. This implies that 
\[\int_{\partial B_\epsilon} -\partial_rd r \partial_r w_1= \int_{\partial B_\epsilon} \partial_rd(0)\,\Re(\frac{\alpha}{z}) + O(\epsilon) = O(\epsilon). \]
For the second of last term, we can proceed similarly: We have $r\partial_{\theta r} w_1 = \Re(\frac{i\alpha}{z})+O(z)$ and $\left(\frac{\frac{1}{r}\partial_\theta c}{1+b}\right)=\left(\frac{\frac{1}{r}\partial_\theta c}{1+b}\right)(0)+O(z)$. Thus, as  before we get that 
\[\int_{\partial B_\epsilon} \left(\frac{\frac{1}{r}\partial_\theta c}{1+b}\right) r \partial_{\theta r} w_1= \int_{\partial B_\epsilon}  \left(\frac{\frac{1}{r}\partial_\theta c}{1+b}\right)(0)\,\Re(\frac{i\alpha}{z}) + O(\epsilon) = O(\epsilon)\,.\]
It remains to estimate the first two terms. They are estimated in a similar fashion. We recall that if $f $ is a $C^3$ function then $\frac1r\partial_\theta f=   \left(\frac1r\partial_\theta f\right)(0)+ O(z)$ and $\frac1r\partial_{r\theta} f=   \left(\frac1r\partial_{r\theta} f\right)(0)+ O(z)$ thus, there is a constant $d_1$ such that 
\[ \frac1r\partial_\theta d = d_1 +O(z) \text{ and } \frac1r \frac{\partial}{\partial r}\left(\frac{\partial_\theta c}{1+b}\right) = O(1). \]
Using  $\partial_\theta w_1 =\Re(\frac{-i\alpha}{z})+O(r) $
we integrate by parts once in the first term and conclude 
\begin{align*}
&\int_{\partial B_\epsilon}	\frac{2+d}{r}\partial_{\theta\theta}w_1+ \frac{\partial}{\partial r}\left(\frac{\partial_\theta c}{1+b}\right)\partial_\theta w_1= \int_{\partial B_\epsilon}	\left(-\frac1r \partial_\theta d+ \frac{\partial}{\partial r}\left(\frac{\partial_\theta c}{1+b}\right)\right)\partial_{\theta}w_1\\
&=\int_{\partial B_\epsilon} \left(-d_1\,\Re(\frac{-i\alpha}{z}) + O(1)\right) = 0 + O(\epsilon). 
\end{align*}
This concludes the mixed term.

Let us now finally estimate $I(w_1,w_1)$. Note that 
\begin{align*}
\Delta_g w_1 \frac{\partial w_1}{\partial \nu} = - \frac{1}{\frac12 \abs{X_z}^2} \Delta w_1 \partial_r w_1 = -\frac{r^4}{1+b} \Delta u \,\partial_r w_1 = O(r^2)	
\end{align*}
since $\abs{\partial_r w_1} \le\frac{C}{r^2}$. Using the fact that for a real valued function $f$ one has $\frac{1}{4}\abs{Df}^2= \abs{\partial_zf}^2$, where $\partial_z = \frac12 (\partial_x  - i \partial_y )$,  we have that
\begin{align*} \frac14 \abs{d w_1}_g^2 &= \frac{1}{\frac12\abs{X_z}^2}\abs{\partial_zw_1}^2 = \frac{r^4}{1+b}\left|-\frac{\alpha}{z^2} + \frac{\beta}{z} + \partial_zu\right|^2\\
&=\frac{1}{1+b}\abs{-\alpha+\beta z + z^2 \partial_zu}^2\,. \end{align*} 
This is a $C^1$ function. We thus deduce that $\frac{\partial}{\partial \nu} \abs{dw_1}_g^2$ is bounded on $D_\epsilon$ and we know that $I(w_1,w_1)=O(\epsilon)$. 
\end{proof}

\begin{remark}
 We note that we also re-proved the formula for the second variation (4.31) first proven by Michelat in \cite{Michelat} for a smooth variation $\vec v$. If the variation $\vec v = \psi n_\Psi$ is smooth, the coefficient $\beta$ is zero in the expansion of $w$ and $\psi(p) = v$. Thus, it remains to show that in (\ref{eq:expansion at the end}) the term $2\int_{\partial D_\epsilon (p)}\frac{|\partial X|^2}{\partial \nu}$ agrees with $+4\pi \frac{\operatorname{Res}_{p}(\vec \Phi, U)}{\epsilon^2}$ using the notation $\operatorname{Res}_{p}(\vec \Phi, U)$ from Definition-Proposition~4.6 in \cite{Michelat}. Note that we have chosen our parametrization in such a way that $\operatorname{Res}_{p}(\vec \Phi, U) = 2$ (see the beginning of Section~\ref{section3}).\\
 In order to show $2\int_{\partial D_\epsilon (p)}\frac{|\partial X|^2}{\partial \nu} = \frac{8\pi}{\epsilon^2} + O(\epsilon) $ we go back to the computation (\ref{eq:Computation}). We use the expansion  of $b$ and $c$ (in particular $c(0)=0$) at $z=re^{i\theta}$ and get that
\begin{align*}
&\sqrt{\frac12 \abs{\nabla X}^2}\;\frac{\partial \abs{X}^2}{\partial \nu}= - \frac{\partial \abs{X}^2}{\partial r}=\frac{2}{r^3} + \frac{2c(z)}{r^3}-\frac{Dc(z)\cdot z}{r^3}\\
&=\frac{2}{r^3} + \frac{2Dc(0)\cdot z + D^2c(0)(z,z) + O(z^3)}{r^3} - \frac{Dc(0)\cdot z + D^2c(0)(z,z)+O(z^3)}{r^3}\\
&=\frac{2}{r^3} + \frac{ Dc(0)\cdot z}{r^3} + O(1).   	
\end{align*}
And since 
\[ \int_{\partial B_\epsilon}\frac{Dc(0)\cdot z}{\epsilon^3} =0\, \] 
we get the form $I(w_0,w_0) = \frac{8\pi}{\epsilon^2} + O(\epsilon)$. All other terms remain the same as in the proof of Lemma~\ref{lem.expansion at an end}, therefore we have shown $2\int_{\partial D_\epsilon (p)}\frac{|\partial X|^2}{\partial \nu} = \frac{8\pi}{\epsilon^2} + O(\epsilon) $ and thus
\begin{align*}
-\int_{\partial D_\epsilon(p)} \Delta_gw  \frac{\partial w}{\partial \nu} - \frac{1}{2} \frac{\partial}{\partial \nu} \left(\abs{dw}_g^2\right) + \int_{\partial D_\epsilon(p)} 2K \frac{\partial w}{\partial \nu}
	= - v^2 \frac{8\pi}{\epsilon^2} + O(\epsilon)
	\end{align*}
	if $\psi$ is smooth.
\end{remark}

At this point let us shortly fix the following property of functions with an expansion as in Lemma~\ref{lem.expansion at an end}.
\begin{lemma}\label{lem.sobolev space}
	Under the assumptions of Lemma~\ref{lem.expansion at an end} we have that $\frac{w}{\abs{X(z)}^2}$ is an element of $W^{2,p}(D_{\epsilon}(p_i), d\mu_{\hat{g}})\cap C^1(D_{\epsilon}(p_i))$ for all $p<\infty$ with respect to the metric $\hat{g}$ on the Willmore surface. 
\end{lemma}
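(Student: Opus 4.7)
The plan is to transform everything to the local conformal coordinate $z$ on the minimal surface $X$ at $p_i$ and check the claim by direct computation term by term.

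First I would verify that the Willmore metric $\hat{g}$ extends to a smooth Riemannian metric on $B_\epsilon \subset \R^2$ in the $z$-coordinates. Indeed, since $\hat{g} = |X|^{-4} g$ and in the chosen conformal coordinates one has $\tfrac12 |X_z|^2 = |z|^{-4}(1+b(z))$ together with $|X|^2 = |z|^{-2}(1+c(z))$, the components of $\hat{g}$ equal $\tfrac{1+b(z)}{(1+c(z))^2}\delta_{ij}$, which is a smooth positive definite metric on $B_\epsilon$ (the singular factors $|z|^{\pm 4}$ cancel exactly). Consequently $W^{2,p}(D_\epsilon(p_i), d\mu_{\hat{g}})$ coincides, as a set with equivalent norm, with the flat Sobolev space $W^{2,p}(B_\epsilon, dx\,dy)$, and continuity with respect to $\hat{g}$ is the same as continuity in $z$. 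So the task reduces to showing that the function
\[
F(z) \defi \frac{w(z)}{|X(z)|^2} = v + \frac{|z|^2}{1+c(z)}\Bigl(\Re(\tfrac{\alpha}{z}) + \beta \log|z| + u(z)\Bigr)
\]
lies in $W^{2,p}(B_\epsilon) \cap C^1(B_\epsilon)$ for every $p<\infty$.

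Next I would treat the three non-constant summands separately, using that $(1+c(z))^{-1}$ is a smooth (and bounded away from zero) factor. The first summand simplifies to $(1+c(z))^{-1}\Re(\alpha \bar{z})$, which is visibly smooth on $B_\epsilon$. The third summand equals $(1+c(z))^{-1}|z|^2 u(z)$, a product of a $C^{2,\alpha}$ function with a smooth one, hence in $C^{2,\alpha}\subset W^{2,p}\cap C^1$. The main (and only mildly delicate) summand is
\[
G(z) \defi \frac{\beta |z|^2 \log|z|}{1+c(z)}\,.
\]
Since $|z|^2 \log|z|$ is $C^1$ on $B_\epsilon$ with $\nabla(|z|^2\log|z|) = z(2\log|z|+1)$ vanishing at the origin, and since $(1+c(z))^{-1}$ is smooth, $G$ is of class $C^1$ on $B_\epsilon$. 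For the second derivatives, the product rule together with $D^2(|z|^2\log|z|) = O(\log|z|)$ and $D(|z|^2\log|z|) = O(|z|\log|z|)$, combined with the smoothness of $(1+c)^{-1}$, yields $|D^2 G(z)| \le C(1+|\log|z||)$, and $\log|z|\in L^p(B_\epsilon)$ for every $p<\infty$.

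Summing the three contributions, $F$ is $C^1$ on $B_\epsilon$ and has second distributional derivatives in $L^p(B_\epsilon)$ for every $p<\infty$. Translating back via the equivalence of $W^{2,p}(B_\epsilon,dx\,dy)$ and $W^{2,p}(D_\epsilon(p_i), d\mu_{\hat{g}})$ proven in the first step finishes the lemma. The only step requiring any care is the bookkeeping for $G$; the rest is essentially a check that the singular pieces $|X|^{-2}\Re(\alpha/z)$ and $v|X|^2/|X|^2$ are in fact regular, which follows immediately from the explicit cancellation of the $|z|^{-2}$ blow-up in $|X|^2$.
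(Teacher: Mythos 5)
Your proof is correct and follows essentially the same route as the paper: pass to the conformal coordinate $z$, observe that the singular factor $\abs{z}^{-2}$ in $\abs{X}^2$ cancels, and isolate the only delicate term $\frac{\beta\abs{z}^2\ln\abs{z}}{1+c(z)}$, whose first derivative is continuous and whose second derivatives are $O(\abs{\ln\abs{z}})\in L^p$ for all $p<\infty$. Your explicit verification that $\hat{g}$ extends smoothly in the $z$-chart (so that $W^{2,p}(D_\epsilon(p_i),d\mu_{\hat g})$ agrees with the flat Sobolev space) is a point the paper leaves implicit, and your observation that the remainder is only $C^{2,\alpha}$ rather than smooth is slightly more careful than the paper's phrasing, but neither changes the argument.
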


\begin{proof}
By \eqref{eq:expansion w} and \eqref{eq:expansion of X_z} we have in the conformal coordinates $z$ 
\begin{align*}\label{eq:expansion w/|X|^2}
\frac{w}{\abs{X(z)}^2}&= v + \Re\left(\frac{\alpha}{z\abs{X(z)}^2}\right) + \beta\frac{\ln\abs{z}}{\abs{X(z)}^2} + \frac{u(z)}{\abs{X(z)}^2}\\
&=v +  \Re\left(\frac{\alpha \bar{z}}{1+c(z)}\right)+ \beta \frac{\abs{z}^2\ln\abs{z}}{1+c(z)} + \frac{\abs{z}^2u(z)}{1+c(z)}\\
&=\tilde{u}(z)+ \beta \frac{\abs{z}^2\ln\abs{z}}{1+c(z)}\,;
\end{align*}
where $\tilde{u}$ is a smooth function. Thus $\tilde{u}$ is clearly in element $W^{2,2}(D_{\epsilon}(p_i))\cap C^1(D_{\epsilon}(p_i))$. 
We write the second term as $\frac{f(\abs{z})}{1+c(z)}$ with $f(t)=t^2\ln(t)$. We have $f'(t)= t(2\ln(t) +1)$ is a bounded continuous function in $0$ and $f''(t)= 2\ln(t)+3$ is unbounded.
Therefore we conclude that the first derivative is bounded and continuous in $0$ since
\begin{align*}
D\frac{f(\abs{z})}{1+c(z)}= \frac{f'(\abs{z})}{1+c(z)} \frac{z}{\abs{z}}-\frac{f(\abs{z})}{(1+c(z))^2} Dc(z).
\end{align*}
For the second derivative we have 
\[D^2\frac{f(\abs{z})}{1+c(z)}= \frac{f'(\abs{z})}{1+c(z)} D^2\abs{z}+\frac{f''(\abs{z})}{1+c(z)} D\abs{z}\otimes D\abs{z} + R(z)\]
where $R(z)$ contains at most one derivative of $f(\abs{z})$ and is therefore continuous in $0$. The eigenvalues of the first part are $\frac{f''(\abs{z})}{1+c(z)}$ and $\frac{f'(\abs{z})}{\abs{z}(1+c(z))}$ corresponding to the eigenvectors $\frac{z}{\abs{z}}$ and $\frac{z^\perp}{\abs{z^\perp}}$. Both are unbounded in $0$ so the function is not $C^2$. But they are bounded by $\frac{2(\abs{\ln(\abs{z}}}{\abs{1+c(z)}}\le 4\abs{\ln\abs{z}}$. Since $\ln\abs{z}\in L^p(B_R)$ for all $p<\infty$ we conclude the argument. 

\end{proof}

\begin{theorem}\label{thm:2nd var well-def new}
 Let $\Psi:\Sigma \to\R^3$ be a closed Willmore immersion such that $X:= \frac{\Psi}{|\Psi|^2}:\Sigma\setminus\{p_1,..., p_m\} \to \R^3$ is a complete, immersed minimal surface with $m$ embedded planar ends, $m\geq 1$. 
 Take a function $w\in W^{2,2}(\Sigma, d\mu_{\hat g}) $ with the additional assumption that $w$ has around an end $p_i$ of $X$ an expansion of the form
 \begin{align} \label{expansion new}
w(z)= v_i \abs{X(z)}^2 + \Re(\frac{\alpha_i}{z}) + \beta_i \ln\abs{z} + u_i(z) \end{align}
in the conformal coordinates $z$ from the beginning of this section, where $v_i,\beta_i \in \R$, $\alpha_i \in \C$ and $u_i \in C^{2,\alpha}(\overline{B}_\epsilon)$. \\
We notice that the function $\psi:= \frac{w}{|X|^2}$ satisfies $\psi\in W^{2,2}(\Sigma, d\mu_{\hat g})$ by Lemma~\ref{lem.sobolev space}.\\ 
Then the second variation of $\W(\Psi)$ into the direction $\psi n_\Psi$ is well-defined. In formulas, the second variation reads as follows: For every small $\epsilon>0$ the second variation is
\begin{align*}
 \delta^2 \W(\Psi)(\psi,\psi) &=   \int_{\Sigma\setminus \cup_{i=1}^m D_\epsilon (p_i)} \tfrac{1}{2} \left(L w\right)^2 d \mu_g  \\
  & \qquad \ \ -  2\sum_{i=1}^m v_i^2\int_{\partial D_{\epsilon}(p_i)} \frac{\partial \abs{X}^2}{\partial \nu} + 8\pi  \sum_{i=1}^m v_i \beta_i + R_\epsilon,
\end{align*}
where $g = X^\# \delta$ is the pullback metric of the minimal surface and $D_\epsilon (p_i)= z^{-1}(B_\epsilon)$ is the preimage of the Euclidean ball $B_\epsilon\subset\R^2$ under the chart $z$. The term $R_\epsilon$ is an error term with the property $R_\epsilon \to 0$ for $\epsilon \to 0$. Furthermore, $R_\epsilon$ only depends on the sum of the $v_i,  \alpha_i, \beta_i$ and $\|u_i\|_{C^{2}(\bar D_\epsilon (p_i))}$.
\end{theorem}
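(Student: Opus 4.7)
The key idea is to exploit the pointwise conformal invariance of the integrand $\left(\tfrac14|\vec H|^2 - K\right)d\mu$ together with Gauss--Bonnet to transfer the second variation of $\W$ at $\Psi$ to a computation at the minimal surface $X = \Psi/|\Psi|^2$. Well-definedness is immediate: Lemma~\ref{lem.sobolev space} yields $\psi = w/|X|^2 \in W^{2,p}(\Sigma,d\mu_{\hat g})$ for all $p<\infty$, so in particular $\psi \in W^{2,2}$, which is the natural domain of the Index form in Definition~\ref{def:index}. The remainder of the argument identifies this form with the claimed right-hand side.

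For the correspondence between variations, I would show that for $\Psi_t := \Psi + t\psi n_\Psi$, the inverted family $X_t := \Psi_t/|\Psi_t|^2$ is a variation of $X$ whose normal part at $t=0$ is $w\,n_X$, the factor $|X|^2$ in $w=|X|^2\psi$ appearing because the differential of inversion at $y\in\R^3\setminus\{0\}$ is a conformal linear map with factor $|y|^{-2}$, so that $DI(X)(n_X) = \pm|X|^{-2}\,n_\Psi$. Combining pointwise conformal invariance with $\int_\Sigma K_\Psi\,d\mu_\Psi = 2\pi\chi(\Sigma)$ then yields
\begin{align*}
	\W(\Psi_t) = 2\pi\chi(\Sigma) + \int_{\Sigma\setminus\{p_i(t)\}}\left(\tfrac14 |\vec H_{X_t}|^2 - K_{X_t}\right) d\mu_{X_t},
\end{align*}
where $p_i(t)$ are the preimages of $0$ under $\Psi_t$. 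Fix $\epsilon > 0$ small and set $\Sigma_\epsilon := \Sigma\setminus \bigcup_i D_\epsilon(p_i)$; for $|t|$ sufficiently small $p_i(t) \in D_\epsilon(p_i)$, so $X_t$ is a smooth family of immersions of $\Sigma_\epsilon$ with fixed domain of integration.

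Differentiating the $|\vec H|^2$ contribution twice at $t=0$, every term containing an undifferentiated $\vec H_X$ vanishes because $X$ is minimal, and the classical formula $\frac{d}{dt}\vec H_t\big|_{t=0} = Lw$ for normal variations at minimal surfaces leaves exactly $\tfrac12\int_{\Sigma_\epsilon}(Lw)^2\,d\mu_g$. For the Gauss-curvature term I would apply the pointwise identity
\begin{align*}
	\frac{d^2}{dt^2}K_{g_t}\,d\mathrm{vol}_{g_t}\Big|_{t=0} = d\!\left(\Delta_g w\star dw - \tfrac12 \star d|dw|_g^2\right) + d\bigl(2K_g\star dw\bigr)
\end{align*}
from \cite[Lemma~3.2]{Michelat}, integrate over $\Sigma_\epsilon$ and use Stokes' theorem to reduce to the boundary sum $\sum_i(I_i(w,w) + II_i(w))$ already introduced in the excerpt, which Lemma~\ref{lem.expansion at an end} evaluates to $\sum_i\bigl(2v_i^2\int_{\partial D_\epsilon(p_i)}\partial_\nu|X|^2 - 8\pi v_i\beta_i\bigr) + O(\epsilon)$. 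Assembling both contributions and tracking the sign from $-\int K$ produces the stated formula, with $R_\epsilon$ absorbing the $O(\epsilon)$ term; the promised dependence of $R_\epsilon$ on $v_i,\alpha_i,\beta_i$ and $\|u_i\|_{C^2}$ is inherited directly from Lemma~\ref{lem.expansion at an end}.

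The main obstacle will be the low regularity of $w$ near the ends (only $W^{2,p}$ because of the $\log|z|$ and $\Re(\alpha_i/z)$ contributions) combined with the moving singularities $p_i(t)$: when some $v_i\neq 0$, one has $\Psi_t(p_i)\neq 0$, so the $m$-fold point of $\Psi$ genuinely splits for $t\neq0$. Restricting to $\Sigma_\epsilon$ circumvents both issues, since there all relevant quantities are smooth in $t$ and uniformly bounded, while Lemma~\ref{lem.expansion at an end} is built precisely to accept the expansion \eqref{expansion new} and perform the boundary computation without requiring $C^2$ regularity at $p_i$. A final technical subtlety is to ensure that the Index form defined via general $W^{2,2}$ test sections in Definition~\ref{def:index} agrees with the value obtained from the explicit variations $\Psi_t$; this is handled by a density argument, truncating the singular parts of $w$ at scale $\delta\to 0$ and invoking continuity of the Index form on $W^{2,2}$.
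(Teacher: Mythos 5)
Your overall strategy --- pointwise conformal invariance of $\tfrac12\abs{\mathring{A}}^2\,d\mu$ plus Gauss--Bonnet to transfer the computation to the minimal surface, restriction to $\Sigma_\epsilon$, Michelat's pointwise formulas for the second derivatives of $\abs{\vec H}^2 d\mu$ and $K\,d\mu$, Stokes together with Lemma~\ref{lem.expansion at an end} for the boundary terms, and a $\delta$-truncation of the singular part followed by $W^{2,2}$-continuity of the Index form --- is essentially the route the paper takes. There is, however, one concrete gap: you never account for the contribution of $\bigcup_{i}D_\epsilon(p_i)$ to the second variation. After writing $\W(\Psi_t)=2\pi\chi(\Sigma)+\int_{\Sigma\setminus\{p_i(t)\}}\bigl(\tfrac14\abs{\vec H_{X_t}}^2-K_{X_t}\bigr)d\mu_{X_t}$ you differentiate only the integral over $\Sigma_\epsilon$; the remaining integral over $D_\epsilon(p_i)\setminus\{p_i(t)\}$ is, in your picture, an integral over a neighbourhood of a \emph{moving end} of the non-compact surface $X_t$, where $\abs{X_t}\to\infty$, and it is not clear that its second $t$-derivative is $o(1)$ as $\epsilon\to0$, nor that differentiation under the integral is legitimate there. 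The paper handles exactly this piece by \emph{not} inverting it: on $\bigcup_i D_\epsilon(p_i)$ it keeps the integrand in the form $\tfrac12\abs{\mathring{A}_{\Psi_t}}^2\,d\mu_{\hat g_t}$ on the compact, smooth Willmore surface, where the second derivative at $t=0$ is controlled by $C\,\norm{\psi}_{W^{2,2}(\cup_i D_\epsilon(p_i),\,d\mu_{\hat g})}$ with $C$ independent of $\epsilon$; by Lemma~\ref{lem.sobolev space} this tends to $0$ with $\epsilon$ and is absorbed into $R_\epsilon$. Without this estimate (or an equivalent one) your $R_\epsilon$ is not shown to vanish and the asserted formula is not established.

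A secondary point: your opening claim that well-definedness is ``immediate'' because $\psi\in W^{2,2}$ lies in ``the natural domain of the Index form'' is slightly circular. Definition~\ref{def:index} characterizes $Z$ only through \textbf{smooth} variations, and part of the content of the theorem is that the $W^{2,2}$-extension of the Index form, evaluated at the non-$C^2$ section $\psi\, n_\Psi$, is computed by the stated boundary expansion. The paper makes this precise by constructing $C^{2,\alpha}$ approximations $\psi_k^\delta$ (cutting off only the logarithmic part of $w$ near each end), for which $\delta^2\W(\Psi)(\psi_k^\delta,\psi_k^\delta)=\tfrac{d^2}{dt^2}\W(\Psi_t)\bigr\vert_{t=0}$ holds a priori, and only then passing to the limit in $\delta$ and $k$. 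Your closing paragraph gestures at exactly this, but it needs to be the backbone of the argument --- every identity you derive must be proved first for the regularized variations and then transported to $\psi$ by continuity --- rather than a final remark.
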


\begin{proof}
We choose $w_k\in C^{2,\alpha}(\Sigma\setminus\{p_1,...,p_m\})$ with the following properties: $w_k=w$ on $D_{\epsilon}(p_i)$ for all $i=1,...,m$ and $w_k\to w$ in $W^{2,2}(\Sigma\setminus \cup_{i=1}^m D_{\frac{\epsilon}{2}}(p_i), d\mu_{\hat g})$. \\
Furthermore, for $2\delta <\frac{\epsilon}{2}$ we choose a cut-off function
\begin{align*}
\eta_\delta=
 \begin{cases}
  0 \text{ on } \cup_{i=1}^m D_\delta(p_i)\\
  1 \text{ on } \Sigma \setminus\cup_{i=1}^m D_{2\delta}(p_i)\\
  \text{ smooth interpolation otherwise}
 \end{cases},
 \ \ 0 \leq \eta_\delta\leq 1, \ \ \eta_\delta \in C^\infty(\Sigma).
\end{align*}
The function $w_k$ coincides with $w$ on $D_\epsilon (p_i)$. Thus, it has the same expansion (\ref{expansion new}) around each end. Define
\begin{align*}
 w_k^\delta : =
 \begin{cases}
 |X|^2 v_i + \Re(\frac{\alpha_i}{z}) + \beta_i\eta_\delta \ln |z| + u_i(z) \text{ on } \bar D_{\frac{\epsilon}{2}} (p_i) \\
 w_k \text{ on } \Sigma \setminus  \cup_{i=1}^m D_{\frac{\epsilon}{2}} (p_i).
 \end{cases}
\end{align*}
By this construction we approximated only the logarithmically growing part of $w$ around each end. We define
\begin{align*}
 \psi_k^\delta : = \frac{w_k^\delta}{|X|^2}.
\end{align*}
We notice that $\psi_k^\delta$ is a $C^{2,\alpha}$-function on all of $\Sigma$, and $\psi_k^\delta \to \psi_k = \frac{w_k}{|X|^2}$ as $\delta \to 0$ in $W^{2,2}(\Sigma, d\mu_{\hat g})$ and $\psi_k \to \psi$ as $k\to \infty$ in $W^{2,2}(\Sigma, d\mu_{\hat g})$.\\
Consider the family of $C^{2,\alpha}$-immersions $\Psi_t = \Psi + t\psi_k^\delta n_{\Psi}$, $t\in (-\epsilon_0, \epsilon_0)$. We define the tracefree second fundamental form $\mathring{A}_{ij} = A_{ij} - \frac{1}{2}\vec H \hat g_{ij}$. The quantity $|\mathring{A}|^2_{\hat g} d\mu_{\hat g}$ with $\hat g = \Psi^\# \delta$ is pointwise conformally invariant, see for example \cite[Section~1.2]{KuwertSch}. 
Because of the Gau\ss-Bonnet Theorem and the Gau\ss\ equation $\frac{1}{4}|\vec H|^2 = K + \frac{1}{2}|\mathring{A}|^2$ the Willmore functional $\W$ and the functional $\mathcal{T}(\Psi) := \frac{1}{2}\int_\Sigma |\mathring{A}|^2 d\mu_{\hat g} $ differ only by an additive constant. Thus, we have that $ \frac{d^2}{dt^2} \W(\Psi_t)\big|_{t=0}= \frac{d^2}{dt^2} \mathcal{T}(\Psi_t)\big|_{t=0} $. We mostly suppress the dependence of $t$ in the notations. 
We compute 
\begin{align} \begin{split}
 \delta^2 & \W(\Psi)(\psi_k^\delta, \psi_k^\delta  ) =\frac{d^2}{dt^2} \mathcal{T}(\Psi_t)\big|_{t=0}  \\
 &=\frac{d^2}{dt^2} \mathcal{T}(\frac{\Psi_t}{|\Psi_t|^2} \big|_{\Sigma\setminus \cup_{i=1}^m D_\epsilon (p_i)})\big|_{t=0} + \frac{d^2}{dt^2} \big|_{t=0} \int_{\cup_{i=1}^m D_\epsilon (p_i)}  \tfrac{1}{2} |\mathring{A}|^2\, d\mu_{\hat g} \\
 &= \frac{d^2}{dt^2} \big|_{t=0}  \big\{\int_{\Sigma\setminus \cup_{i=1}^m D_\epsilon (p_i)}  \tfrac{1}{4} |\vec H|^2 - K_g\, d\mu_{ g} + \int_{\cup_{i=1}^m D_\epsilon (p_i)}  \tfrac{1}{2} |\mathring{A}|^2\, d\mu_{\hat g} \big\}\\
 & =  \int_{\Sigma\setminus \cup_{i=1}^m D_\epsilon (p_i)} \frac{d^2}{dt^2} \big|_{t=0}\left( \tfrac{1}{4} |\vec H|^2 - K_g \right)d\mu_{ g} + \int_{\cup_{i=1}^m D_\epsilon (p_i)} \frac{d^2}{dt^2} \big|_{t=0} \tfrac{1}{2} |\mathring{A}|^2\, d\mu_{\hat g}\\
 & = \int_{\Sigma\setminus \cup_{i=1}^m D_\epsilon (p_i)} \tfrac{1}{2}\left( L w_k\right)^2 - d\left(\Delta_gw_k \star dw_k - \frac{1}{2} \star d\abs{dw_k}_g^2\right) - d\left(2 K_g \star dw_k \right) d\mu_g\\
 & \qquad \ \ + \int_{\cup_{i=1}^m D_\epsilon (p_i)} \frac{d^2}{dt^2} \big|_{t=0} \tfrac{1}{2} |\mathring{A}|^2\, d\mu_{\hat g} \end{split} \label{eq:vor limit new}
\end{align}
where we used several things in this computation:
\begin{itemize}
 \item $\psi_k^\delta$ is of regularity $C^{2,\alpha}$. Only for $C^2$ variations we a priori know 
 \begin{align*}
  \delta^2 \W(\Psi)(\psi_k^\delta, \psi_k^\delta) = \frac{d^2}{dt^2} \W(\Psi_t)\big|_{t=0}.
 \end{align*}
 \item We used the conformal invariance of $|\mathring{A}|^2 d\mu_{\hat g}$ on $\Sigma\setminus \cup_{i=1}^m D_\epsilon (p_i)$.
 \item We also use that $\psi_k^\delta$ is smooth for interchanging differentiation and integration. In the first integral we have changed the metric conformally to $g$ which causes trouble at the ends. But we restrict ourselves to $\Sigma\setminus \cup_{i=1}^m D_\epsilon (p_i)$ which is a compact part of the minimal surface $X$. Therefore, differentiation and integration can be interchanged for the smooth variation $\Psi_t$. On the other part $ \cup_{i=1}^m D_\epsilon (p_i)$ we stay on the compact surface with the smooth metric, therefore interchanging is not problematic.
 \item In the last step we used the formulas for the second derivatives of $ |\vec H|^2 d\mu_g$ and $ K_g d\mu_g$ for the a compact piece of the minimal surface $X$ computed in \cite[p.15 and Lemma~3.2]{Michelat}. For that, we also needed $\frac{d}{dt}\frac{\Psi_t}{|\Psi_t|^2}\big|_{t=0} = |X|^2 \psi_k^\delta n_{X}$ from \cite[(4.6)]{Michelat}. 
 And we did not write the $\delta$ because we are on the piece of the surface where $w_k^\delta= w_k$.
\end{itemize}
On the set $\cup_{i=1}^m D_\epsilon(p_i) \subset \Sigma$ we notice that
\begin{align}\label{ineq:on epsilon new}
  \big|\int_{\cup_{i=1}^m D_\epsilon(p_i)} \frac{d^2}{d t^2}\big|_{t=0} |\mathring{A}|^2 d\mu_{\hat g} \big| \leq C \|\psi_k^\delta\|_{W^{2,2}(\cup_{i=1}^m D_\epsilon(p_i))}
\end{align}
for a constant $C$ not depending on $\epsilon$, $k$ nor $\delta$. Thus, by letting at first $\delta \to 0$ and then $k\to \infty$ this term is of the order $O(\| \psi\|_{W^{2,2}(\cup_{i=1}^m D_\epsilon (p_i))})$. Taking the computations of Lemma~\ref{lem.sobolev space} into account we see that $\| \psi\|_{W^{2,2}(\cup_{i=1}^m D_\epsilon (p_i))}$ only depends on the sum of the $v_i,  \alpha_i, \beta_i$ and $\|u_i\|_{C^{2}(\bar D_\epsilon (p_i))}$. And obviously, $\| \psi\|_{W^{2,2}(\cup_{i=1}^m D_\epsilon (p_i))} \to 0$ when $\epsilon \to 0$. \\
It remains to simplify the terms in (\ref{eq:vor limit new}) that come from the variation of $K_g$ and are boundary terms. We use Lemma~\ref{lem.expansion at an end} and Stokes' theorem to see that 
\begin{align}\label{equ:on complement new} \begin{split}
 \int_{\Sigma\setminus \cup_{i=1}^m D_\epsilon (p_i)} &   d\left(\Delta_gw_k \star dw_k - \frac{1}{2} \star d\abs{dw_k}_g^2\right) + d\left(2 K_g \star dw_k \right) d\mu_g = \\
  & \qquad \qquad \qquad  2\sum_{i=1}^m v_i^2 \int_{\partial D_\epsilon(p_i)} \frac{\partial|X|^2}{\partial \nu} - 8\pi \sum_{i=1}^m v_i \beta_i + \tilde u \end{split}
\end{align}
with $\tilde u \in O(\epsilon)$  only depending on the sum of the $v_i,  \alpha_i, \beta_i$ and $\|u_i\|_{C^{2}(\bar D_\epsilon (p_i))}$.\\
We finally use that the Index form $\delta^2\W(\Psi)(\cdot,\cdot)$  is continuous with respect to the $W^{2,2}$-norm on $(\Sigma, d\mu_{\hat g})$ (using that $\Sigma$ is compact). We use (\ref{eq:vor limit new}), (\ref{ineq:on epsilon new}) and (\ref{equ:on complement new}) to get that
\begin{align*}
 \delta^2 \W (\Psi)(\psi,\psi) & = \int_{\Sigma\setminus \cup_{i=1}^m D_\epsilon (p_i)} \tfrac{1}{2}\left( L w \right)^2 d\mu_g -  2\sum_{i=1}^m v_i^2 \int_{\partial D_\epsilon(p_i)} \frac{\partial|X|^2}{\partial \nu} \\
 & \qquad\qquad\qquad+ 8\pi \sum_{i=1}^m v_i \beta_i + R_\epsilon,
\end{align*}
where we also used $\int_{\Sigma\setminus \cup_{i=1}^m D_\epsilon (p_i)} \tfrac{1}{2}\left( L w_k \right)^2d \mu_g \to \int_{\Sigma\setminus \cup_{i=1}^m D_\epsilon (p_i)} \tfrac{1}{2}\left( L w \right)^2$ for $k \to\infty$. The term $R_\epsilon$ contains $\tilde u\in O(\epsilon)$ and the quantity coming from (\ref{ineq:on epsilon new}).\\
As $\epsilon>0$ is fixed, all remaining terms are finite. Thus, the second variation of $\W$ is well-defined for functions $\psi$ as given in the theorem.

\end{proof}

\begin{theorem}\label{thm:second variation formula}
Let $\Psi:\Sigma \to\R^3$ be a closed Willmore immersion such that $X:= \frac{\Psi}{|\Psi|^2}:\Sigma\setminus\{p_1,..., p_m\} \to \R^3$ is a complete, immersed minimal surface with $m$ embedded planar ends, $m\geq 1$. Consider a function $w$ as in Theorem~\ref{thm:2nd var well-def new} but additionally having the same leading value at each end, i.e.\ $w$ has around an end $p_i$ of $X$ an expansion of the form
\begin{align*}w(z)= v \abs{X(z)}^2 + \Re(\frac{\alpha_i}{z}) + \beta_i \ln\abs{z} + u_i(z) \end{align*}
in conformal coordinates $z$ as before, where $v,\beta_i \in \R$, $\alpha_i \in \C$ and $u_i \in C^{2,\alpha}(\overline{B}_\epsilon)$. We use the notation
\begin{align*}
w = : v \abs{X(z)}^2 + w_1. 
\end{align*}
Then the second variation of $\W(\Psi)$ into direction $\psi n_{\psi}$ with $\psi = \frac{w}{|X|^2}$ can be computed by the formula
\begin{align}\label{eq:second variation}
\begin{split}
 \delta^2 \W(\Psi)(\psi,\psi) &=  \frac{1}{2} \int_{\Sigma} \left(L w_1\right)^2 d \mu_g \\
 & + 2 v  \int_{\Sigma} L w_1 (2 - K_g |X|^2) d\mu_g + 8\pi v\sum_{i=1}^m \beta_i\\
 & + 2 v^2\int_{\Sigma} \left( K_g^2 |X|^4 - 4 K_g |X|^2 \right) d\mu_g 
 \end{split}
\end{align}

\end{theorem}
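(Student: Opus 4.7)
My plan is to substitute the decomposition $w = v\abs{X}^2 + w_1$ into the formula from Theorem~\ref{thm:2nd var well-def new}, expand $(Lw)^2$ algebraically, and then cancel the divergent area contribution against the boundary integral by Stokes' theorem, using that $X$ is minimal.

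Since $X$ is a conformal isometric minimal immersion of $(\Sigma \setminus \{p_i\}, g)$ into $\R^3$, the coordinate functions are $g$-harmonic, hence
\[ \Delta_g \abs{X}^2 = 2X\cdot \Delta_g X + 2 \abs{\nabla X}_g^2 = 0 + 4 = 4. \]
Combined with $\abs{A}^2 = -2K_g$ in codimension one, so $L = \Delta_g - 2K_g$, this gives $L(v\abs{X}^2) = 2v(2 - K_g\abs{X}^2)$. Expanding the square yields
\[ \tfrac12(Lw)^2 = \tfrac12(Lw_1)^2 + 2v(Lw_1)(2 - K_g\abs{X}^2) + 8v^2 - 8v^2 K_g\abs{X}^2 + 2v^2 K_g^2 \abs{X}^4. \]
The key cancellation is between the constant term $8v^2$ and the boundary integral in Theorem~\ref{thm:2nd var well-def new}. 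Setting $\Sigma_\epsilon := \Sigma\setminus \bigcup_i D_\epsilon(p_i)$ and applying the divergence theorem with $\Delta_g\abs{X}^2 = 4$, noting that $\nu$ is outward from $\Sigma_\epsilon$ (i.e.\ pointing towards each puncture, matching the sign convention from Lemma~\ref{lem.expansion at an end}), we obtain
\[ 4\mu_g(\Sigma_\epsilon) = \sum_{i=1}^m \int_{\partial D_\epsilon(p_i)} \tfrac{\partial \abs{X}^2}{\partial \nu}, \qquad \text{hence} \qquad 8v^2 \mu_g(\Sigma_\epsilon) - 2v^2 \sum_{i=1}^m \int_{\partial D_\epsilon(p_i)} \tfrac{\partial \abs{X}^2}{\partial \nu} = 0. \]

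What remains is to verify that the integrals $\int_{\Sigma_\epsilon}(Lw_1)^2$, $\int_{\Sigma_\epsilon}(Lw_1)(2 - K_g\abs{X}^2)$, and $\int_{\Sigma_\epsilon}(K_g^2\abs{X}^4 - 4 K_g\abs{X}^2)$ (all with respect to $d\mu_g$) admit finite limits as $\epsilon \to 0$. For this I invoke \eqref{eq:expansion Gauss curvature} ($K_g = O(\abs{z}^6)$) and \eqref{eq:expansion of X_z} ($\abs{X}^2 = O(\abs{z}^{-2})$ and $d\mu_g = O(\abs{z}^{-4})\,\abs{dz}^2$), so that $K_g \abs{X}^2 d\mu_g$ and $K_g^2 \abs{X}^4 d\mu_g$ are $O(1)\,\abs{dz}^2$ near every end. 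Moreover, since $\mathrm{Re}(\alpha_i/z)$ and $\log\abs{z}$ are Euclidean-harmonic on $D_\epsilon(p_i)\setminus\{p_i\}$, we have $\Delta_{\mathrm{euc}} w_1 = \Delta_{\mathrm{euc}} u_i = O(1)$, and thus $\Delta_g w_1 = \rho^{-2}\Delta_{\mathrm{euc}} w_1 = O(\abs{z}^4)$; together with $K_g w_1 = O(\abs{z}^5)$ this gives $Lw_1 = O(\abs{z}^4)$, so $(Lw_1)^2 d\mu_g = O(\abs{z}^4)\abs{dz}^2$. All three integrands are therefore integrable across the ends, the integrals converge to their counterparts over $\Sigma$, and combining this with $R_\epsilon \to 0$ from Theorem~\ref{thm:2nd var well-def new} and the unchanged term $8\pi v \sum_i \beta_i$ produces the claimed formula \eqref{eq:second variation}.

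The only delicate point is the bookkeeping in the Stokes step: one must match the outward-normal convention of $\Sigma_\epsilon$ (which reverses the natural orientation of $\partial D_\epsilon(p_i)$ viewed as a small Euclidean circle) with the convention built into Lemma~\ref{lem.expansion at an end}, where $\partial_\nu = -\partial_r$; aside from this everything else is a direct algebraic rearrangement.
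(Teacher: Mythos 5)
Your proposal is correct and follows essentially the same route as the paper: substitute $w = v\abs{X}^2 + w_1$ into the formula of Theorem~\ref{thm:2nd var well-def new}, use $\Delta_g\abs{X}^2=4$ and $L=\Delta_g-2K_g$ to expand $(Lw)^2$, and cancel the $8v^2\mu_g(\Sigma_\epsilon)$ term against the boundary integral via Stokes' theorem before letting $\epsilon\to 0$. Your explicit decay estimates ($Lw_1=O(\abs{z}^4)$, $K_g\abs{X}^2\,d\mu_g=O(1)\abs{dz}^2$) justifying the passage to the limit are a welcome addition that the paper leaves implicit.
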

\begin{proof}
 We use the form $w = v|X|^2 + w_1$ and compute with the formula from Theorem~\ref{thm:2nd var well-def new}, Lemma~\ref{lem.expansion at an end} and $\Delta_g|X|^2 = 4$
 \begin{align*}
  \delta^2 \W(\Psi)(\psi,\psi) &= \lim_{\epsilon \to 0} \Big\{ \frac{1}{2} \int_{\Sigma\setminus \cup_{i=1}^m D_\epsilon (p_i)} \left( 4 v - 2 K_g |X|^2 v + L w_1\right)^2 \\
   & \qquad \ \ - 2v^2 \sum_{i=1}^m\int_{\partial D_{\epsilon}(p_i)} \frac{\partial \abs{X}^2}{\partial \nu} + 8\pi v \sum_{i=1}^m \beta_i \Big\}\\
   & = \lim_{\epsilon \to 0} \Big\{  \int_{\Sigma\setminus \cup_{i=1}^m D_\epsilon (p_i)}  8 v^2 +  4 v \left(- 2 K_g |X|^2 v + L w_1\right) d\mu_g \\
  & \qquad \ \  +\int_{\Sigma\setminus \cup_{i=1}^m D_\epsilon (p_i)}  \frac{1}{2}\left(- 2 K_g |X|^2 v + L w_1\right)^2 d\mu_g \\
   & \qquad \ \ - 2v^2 \int_{\Sigma\setminus \cup_{i=1}^m D_\epsilon (p_i)} \Delta_g \abs{X}^2 d\mu_g + 8\pi v \sum_{i=1}^m \beta_i \Big\},
 \end{align*}
where we also used Stokes' theorem to bring it back to $\Sigma\setminus \cup_{i=1}^m D_\epsilon (p_i)$. Using $\Delta_g|X|^2 = 4$ again we see that the integral $8 v^2 \int d\mu_g$ cancels out. The terms left over are precisely the terms in (\ref{eq:second variation}) but still on $\Sigma\setminus\cup_{i=1}^m D_\epsilon (p_i)$. In a last step we take the limit $\epsilon \to 0$ and integrate over all of $\Sigma$.
\end{proof}

\begin{theorem}\label{thm:logJac implies index}
Let $\Psi:\Sigma \to\R^3$ be a closed Willmore immersion such that $X:= \frac{\Psi}{|\Psi|^2}:\Sigma\setminus\{p_1,..., p_m\} \to \R^3$ is a complete, immersed minimal surface with $m$ embedded planar ends, $m\geq 1$. \\
Assume that there exists a logarithmically growing Jacobi field on $X$, i.e.\ a function $ u\in C^{2,\alpha}(\Sigma\setminus\{p_1,...,p_m\}) $, $L u = 0$, with expansion
$$u = \beta_i \log |z| + \tilde u_i (z) \ \text{ with } \ \tilde u_i \in C^{2,\alpha}(\overline{B_\epsilon}), \ \beta_i \in \R,  \  \sum_{i=1}^m \beta_i \not = 0.$$
at each end $p_i$ in the local conformal coordinates $z$ from the beginning of this section. \\
Then we have that $\Ind(\Psi) \geq 1$. 
\end{theorem}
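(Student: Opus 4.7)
The plan is to construct an admissible one-parameter family of variations $\psi_v = v + u/|X|^2$, parametrized by $v\in\R$, and show that for a suitable choice of $v$ the second variation is negative, using the formula of Theorem~\ref{thm:second variation formula}. Specifically, I would set
\begin{equation*}
w := v\,|X|^2 + u,\qquad \psi := \frac{w}{|X|^2} = v + \frac{u}{|X|^2}.
\end{equation*}
Near each end $p_i$, the function $w$ has the admissible expansion $v\,|X(z)|^2 + \beta_i \log|z| + \tilde u_i(z)$, i.e.\ the form required in Theorem~\ref{thm:second variation formula} with $\alpha_i = 0$ and common leading coefficient $v$ at every end. By Lemma~\ref{lem.sobolev space}, $\psi\in W^{2,2}(\Sigma, d\mu_{\hat g})\cap C^1(\Sigma)$, so $\delta^2\W(\Psi)(\psi,\psi)$ is well-defined in the sense of Theorem~\ref{thm:2nd var well-def new}. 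The crucial observation is that the remainder $w_1 := u$ satisfies $Lw_1 = Lu = 0$, which annihilates the first two integrals in formula~(\ref{eq:second variation}) and leaves a pure quadratic in $v$:
\begin{equation*}
\delta^2\W(\Psi)(\psi,\psi) \;=\; 8\pi v\sum_{i=1}^m \beta_i \;+\; 2v^2 \int_\Sigma\bigl( K_g^2 |X|^4 - 4 K_g |X|^2 \bigr)\,d\mu_g.
\end{equation*}

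Next I would verify that $C := \int_\Sigma (K_g^2|X|^4 - 4K_g|X|^2)\,d\mu_g$ is finite and strictly positive. Finiteness follows from the decay $K_g = O(|z|^6)$ recorded in~(\ref{eq:expansion Gauss curvature}), combined with the asymptotics $|X|^2 \sim |z|^{-2}$ and $d\mu_g \sim |z|^{-4}\,dA$ at each end, which turn the two integrands into $O(|z|^4)\,dA$ and $O(1)\,dA$, respectively. Positivity is automatic: since $X\subset\R^3$ is minimal we have $K_g\le 0$, so both summands in $C$ are pointwise non-negative, and $K_g\not\equiv 0$ unless $X$ is a plane (the case $m=1$, in which the hypothesis $\sum_i\beta_i\ne 0$ is anyway vacuous).

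Writing $S := \sum_i \beta_i \ne 0$, the second variation reduces to $f(v) = 8\pi S v + 2Cv^2$ with $f(0)=0$, $f'(0)=8\pi S\ne 0$ and $C>0$. The choice $v_* = -2\pi S/C$ then gives $f(v_*) = -8\pi^2 S^2/C < 0$, producing an explicit admissible $\psi\in W^{2,2}(\Sigma, d\mu_{\hat g})$ with $\delta^2\W(\Psi)(\psi,\psi)<0$. Since smooth normal sections are dense in $W^{2,2}$ and the index form is continuous there, the Rayleigh characterisation of the first eigenvalue $\lambda_0$ of $Z$ forces $\lambda_0<0$, hence $\Ind(\Psi)\ge 1$. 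The only delicate point is to ensure that the non-smooth test function $\psi$ lies in the regularity class to which the index form of Theorem~\ref{thm:2nd var well-def new} applies -- this is precisely what Lemma~\ref{lem.sobolev space} provides, so once the quadratic expression above is derived, the conclusion follows in one line.
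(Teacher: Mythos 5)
Your proposal is correct and follows essentially the same route as the paper: set $w=v\abs{X}^2+u$, use $Lu=0$ to reduce formula \eqref{eq:second variation} to the quadratic $8\pi v\sum_i\beta_i+2v^2\int_\Sigma(K_g^2\abs{X}^4-4K_g\abs{X}^2)\,d\mu_g$, and pick $v=-2\pi\beta/c_X^2$ to make it negative. You additionally spell out why the constant is finite and strictly positive (via $K_g\le 0$ and the decay \eqref{eq:expansion Gauss curvature}) and how a negative value of the index form on a $W^{2,2}$ section yields $\lambda_0<0$ — details the paper leaves implicit but which are consistent with its argument.
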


\begin{proof}
 For every such Jacobi field $u$ we define $w:= v|X|^2 + u$ for $v\in \R$ to be chosen later. As $L u=0$ we get from formula (\ref{eq:second variation}) that
 \begin{align*}
  \delta^2\W(\Psi)(\frac{w}{|X|^2}, \frac{w}{|X|^2}) =  8\pi v\sum_{i=1}^m \beta_i  + 2 v^2\int_{\Sigma} \left( K_g^2 |X|^4 - 4 K_g |X|^2 \right) d\mu_g.
 \end{align*}
The last integral is a positive constant $c_X^2$ only depending on the geometry of $X$. Hence, using the notation $\beta: = \sum_{i=1}^m \beta_i$ we get that
 \begin{align*}
  \delta^2\W(\Psi)(\frac{w}{|X|^2}, \frac{w}{|X|^2}) =  v(8\pi  \beta + 2 v c_X^2).
 \end{align*}
 For $v= -\frac{2\pi \beta}{c_X^2}$ the expression on the right hand side is $-\frac{8\pi^2 \beta^2}{c_X^2}<0$. We emphasize that Theorem~\ref{thm:2nd var well-def new} tells us that the $\delta^2 \W(\Psi)(\frac{w}{|X|^2}, \frac{w}{|X|^2})$ is well-defined.
\end{proof}

\begin{remark}
 The Morin surface \cite{Morin78, Kusner} is a Willmore sphere in $\R^3$ with a quadruple point and $16\pi$ energy. By Theorem~\ref{thm:forspheres} it has Morse Index less or equal than one. We will know explain why it actually has Morse Index equal to one. \\
 We will prove the precise Index for all (smooth) Willmore spheres in the following section. 
  But we want to note here that the above Theorem~\ref{thm:logJac implies index} can also be used to show that the Morin surface has Index one. This can be done as follows: \\
 The work of Montiel and Ros \cite[Theorem~25]{MontielRos} implies that the nullity, $\dim K$ of the inverted Morin surface is five. By the work of Perez and Ros \cite[Lemma~5.2]{PerezRos} it follows that the space of Jacobi fields that have at most logarithmic growth at the ends is $m + \dim K_0$, where $m=\# \text{ends}$ and $K_0$ is the space of bounded Jacobi fields $u$ with $u(p_i)=0$ for all $i$. Hence the number of logarithmic growing Jacobi fields is $m+\dim K_0 - \dim K$. In particular one needs to show that $\dim K_0=2$ for the Morin surface.
 
  The three coordinates of $n_\Psi$ are bounded Jacobi fields, but they satisfy $u(p_i)\neq 0$, which implies that $\dim K_0 \le 2$. There is one other Jacobi field known, namely the support function $u= X\cdot n_X$, see \cite{MontielRos}. In order to prove $u(p)=0$ for this Jacobi field we can compute that the assymptotic planes to the four ends all meet at one point (w.l.o.g.\ the origin). This implies that the support function $u$ is a tangential variation at the ends, which implies $u \in K_0$.\\
  The last missing Jacobi field of $X$ is the support function of the conjugated minimal surface corresponding to $X$. Again, by computing that the assymptotic planes meet at one point we get that this Jacobi field is also in $K_0$. As it is independent of the support function of $X$, we get that $\dim K_0 = 2$.\\
It remains to compute $\sum_{i=1}^4 \beta_i \neq 0$. By a computation that we will do in detail in Proposition~\ref{prop:space of logarithmic growing Jacobi fields on spheres} we get that $\beta_i=1$ for all $i=1,...,4$ (using $\sum_{i=1}^4 n_X(p_i) =0$ for the Morin surface). Thus, Theorem~\ref{thm:logJac implies index} gives a positive Index.
\end{remark}

\section{Jacobi fields with logarithmic growth} \label{section4}

In this section we want to proof the existence of Jacobi fields with logarithmic growth on spheres with a least four ends. We are going to combine ideas of \cite{PerezRos} and \cite{MontielRos} to establish their existence. Our general set up follows closely \cite[section 5]{PerezRos}. 

As in the previous section we fix local conformal coordinates in a neighbourhood $D(p_i)$ around each of the ends $p_i$ of the minimal immersion $X: \Sigma \to \R^3$ such that $z(p_i)=0$. 

The Jacobi operator on $\Sigma$ is given by $L=\Delta_g + \abs{\nabla n_X }_g^2$, where $n_X$ is the Gauss map of the minimal immersion $X$ and $g=X^\sharp\delta$. We note that $L$ is conformally invariant, hence we may ``compactify'' $L$ by multiplying the metric $g$ by a stirctly positive conformal factor $\lambda$. We choose $\lambda$ in such a way that in each $D(p_i)$ we have 
we have $\lambda(z)=\abs{z}^4$. We set $\hat{g}=\lambda g$ and $\hat{L}=\lambda L = \Delta_{\hat{g}} + \abs{\nabla n_X}^2_{\hat{g}}$. For our purpose it is convenient to choose $\lambda$ such that $\hat{g} = \Psi^\sharp \delta$, because for this choice  $\hat{L}$ agrees with the Jacobi operator of the Willmore immersion $\Psi:\Sigma \to \R^3$. 

Let $E=E(\Sigma) \subset C^{2,\alpha}(\Sigma \setminus\{p_1, \dotsc, , p_m\})$ be the space of functions $u$ that have in a neighbourhood $p_i$ inside the domains $D(p_i)$ the following expansion
\begin{equation}\label{eq:expansion lin+log}
	u(z)= \Re(\frac{a_i}{z}) + \alpha_i \ln\abs{z} + \mathsf{u}(z)
\end{equation}
 with $a_i \in \C$, $\alpha_i \in \R$ and $\mathsf{u}\in C^{2,\alpha}(D_\epsilon)$.
 To get a better description of $E_1$ let us fix for each $p_i$ functions on $D_\epsilon(p_i)=\{ x \in \Sigma \colon \abs{z}<\epsilon \} \subset D(p_i)$
 \begin{align}\label{eq:log functions}
 	l_i(z)&:= \eta(\abs{z}) \ln(\abs{z})\\ \nonumber
 	s_i(z)&:=\eta(\abs{z}) \frac{1}{z}
 \end{align}
where $\eta$ is a smooth non-negative cut-off function with $\eta=1$ for $\abs{z}\le\frac\epsilon2$ and $\eta=0$ for $\abs{z}\ge \epsilon$. 
Consider the $m$-dimensional vector space $V=V(\Sigma)=\{ \sum_{i=1}^m \alpha_i l_i \colon  \alpha=(\alpha_1, \cdots, \alpha_m) \in \R^m\}$ and the (real) $2m$-dimensional vector space $V_1=V_1(\Sigma)=\{ \sum_{i=1}^m \Re(a_i s_i(z)) \colon  a=(a_1, \cdots, a_m) \in \C^m\}$. If we denote by $C^{l,\alpha}(\Sigma)$ the set of functions that are $l$-times differentiable and the $l$th derivative is $\alpha$-H\"older continuous on the compact manifold $\Sigma$, we have 
\[ E=C^{2,\alpha}(\Sigma)\oplus V \oplus V_1. \]
Furthermore, we consider the subspace $E_1\subset E$ of all functions in $E$ that do not have a logarithmic part i.e.\ $\alpha =0$ or
\[ E_1 = C^{2,\alpha}(\Sigma) \oplus V_1. \]
We remark that the space $E_0:=C^{2,\alpha}(\Sigma) \oplus V$ had been investigated in \cite{PerezRos}.
The vector spaces $E, E_0,E_1$ are Banach space and in fact the topology is independent of the specific choice of the functions $l_i, s_i$. Since $\frac{1}{z}$ and $\ln(\abs{z})$ are harmonic and each $p_i$ is a branch point of $n_X$, compare \eqref{eq:expansion Gauss curvature}, $\hat{L}: E \to C^{0,\alpha}(\Sigma)$ is a bounded linear operator. The same is valid for $E_0,E_1$ in place of $E$.\\

A helpful tool will be the following integration by parts formula, which is a generalization of \cite[lemma 5.1]{PerezRos}.

\begin{lemma}\label{lem.integration by parts formula}
Let $p$ be a planar embedded end of $\Sigma$ and let $u,v$ have in the conformal coordinates $z$ at the end $p$ an expansion of the form \eqref{eq:expansion lin+log} i.e. 
\begin{align}\label{eq:expansion u,v}
u(z)&= \Re(\frac{a}{z}) + \alpha \ln\abs{z} + \mathsf{u}(z)\\
v(z)&= \Re(\frac{b}{z}) + \beta \ln\abs{z} + \mathsf{v}(z)
\end{align}
Furthermore we may assume that the support of $u$, $\operatorname{supp}(u)$, is disjoint to all the other ends. 
Then we have that
\begin{align}\label{eq:integration by parts1}
\int_\Sigma Lu\,v - u \, Lv \, d\mu_g= 2\pi( \alpha \mathsf{v}(0) - \mathsf{u}(0)\beta ) - 2\pi \Re(a \partial_z \mathsf{v}(0) - \partial_z \mathsf{u}(0)b).
\end{align}
\end{lemma}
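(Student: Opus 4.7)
The plan is to apply Green's identity on the truncated domain $\Sigma_\epsilon := \Sigma \setminus D_\epsilon(p)$ and take $\epsilon \to 0$. Since $\operatorname{supp}(u)$ avoids all other ends and $\Sigma$ is closed, the only non-trivial boundary piece lives on $\partial D_\epsilon(p)$.

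The first observation is that the zeroth-order potential $|\nabla n_X|^2_g$ appearing in $L$ cancels in the combination $Lu\cdot v - u\cdot Lv$, so the integrand reduces to $(\Delta_g u)v - u(\Delta_g v)$. In the chosen conformal coordinates $z$ on $D(p)$ we have $g=\lambda\,\delta$ with conformal factor $\lambda = \tfrac12|X_z|^2$, hence $d\mu_g = \lambda\,dx\,dy$ and $\Delta_g = \lambda^{-1}\Delta$, so that $(\Delta_g u)\,d\mu_g = \Delta u\,dx\,dy$. The problem is therefore reduced to a Euclidean Green computation, which gives
\[
	\int_{\Sigma_\epsilon}(Lu\cdot v - u\cdot Lv)\,d\mu_g \;=\; \int_0^{2\pi}\bigl(u\,\partial_r v - v\,\partial_r u\bigr)(\epsilon,\theta)\,\epsilon\,d\theta,
\]
with the sign arising from the outer normal of $\Sigma_\epsilon$ pointing radially inward on $\partial D_\epsilon(p)$.

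Next I would insert the decomposition $u = u_0 + u_1 + u_2$ with $u_0 = \Re(a/z)$, $u_1 = \alpha\log|z|$, $u_2 = \mathsf{u}(z)$, and analogously for $v$, and write the boundary integrand as a sum of nine pairwise cross-terms. Using $z = re^{i\theta}$ one has $u_0 = (a_1\cos\theta + a_2\sin\theta)/r$, $\partial_r u_1 = \alpha/r$, together with the Taylor expansion $\mathsf{u}(z) = \mathsf{u}(0) + r(\mathsf{u}_x\cos\theta + \mathsf{u}_y\sin\theta) + O(r^2)$. A direct bookkeeping shows:
\begin{itemize}
\item The pairs $(u_0,v_0)$ and $(u_1,v_1)$ give Wronskians that vanish pointwise.
\item The mixed singular pairs $(u_0,v_1)$ and $(u_1,v_0)$ have angular profile equal to $(a_1\cos\theta + a_2\sin\theta)$ up to $\theta$-independent factors blowing up like $1/\epsilon$ or $\log\epsilon/\epsilon$; their $\theta$-averages vanish by orthogonality of $\cos\theta,\sin\theta$ to $1$.
\item The pair $(u_2,v_2)$ is bounded, so its contribution, of order $\epsilon$, dies in the limit.
\item The ``log--smooth'' pairs $(u_1,v_2)$ and $(u_2,v_1)$ contribute in the limit through the leading constant $\mathsf{u}(0)$ resp.\ $\mathsf{v}(0)$, yielding the term $2\pi(\alpha\mathsf{v}(0) - \mathsf{u}(0)\beta)$.
\item The ``pole--smooth'' pairs $(u_0,v_2)$ and $(u_2,v_0)$ contribute through the first-order Taylor coefficients of $\mathsf{u}$ and $\mathsf{v}$; using $\int_0^{2\pi}\cos^2\theta\,d\theta = \int_0^{2\pi}\sin^2\theta\,d\theta = \pi$ and the identification $\Re(a\,\partial_z\mathsf{v}(0)) = \tfrac{1}{2}(a_1\mathsf{v}_x + a_2\mathsf{v}_y)$, they yield the term $-2\pi\Re(a\,\partial_z\mathsf{v}(0) - \partial_z\mathsf{u}(0)\,b)$.
\end{itemize}

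The main obstacle is the bookkeeping of the several singular contributions: individually some of them have pointwise size $1/\epsilon$ or $\log\epsilon$, and the conclusion requires checking that their $\theta$-averages vanish by parity \emph{before} the limit is taken. Once this orthogonality is exploited, all remaining terms either cancel or converge to finite limits. The $C^{2,\alpha}$-regularity of $\mathsf{u}$ and $\mathsf{v}$ is precisely what is needed to expand them to first order in $z$ with a controllable remainder.
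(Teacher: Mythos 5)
Your strategy is exactly the paper's: cancel the zeroth--order potential in the antisymmetric combination $Lu\,v-u\,Lv$, use conformal invariance to reduce to the flat Laplacian, apply Green's identity on $\Sigma\setminus D_\epsilon(p)$, and sort the boundary integrand into pole/log/smooth cross--terms, of which only the log--smooth and pole--smooth pairs survive. The setup, including the sign coming from the inward--pointing outer normal of $\Sigma_\epsilon$, is correct, and the bullets for the pairs $(u_0,v_0)$, $(u_1,v_1)$, the mixed singular pairs and $(u_2,v_2)$ check out.

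The gap sits in the two bullets that actually produce the answer: the asserted limits are not what your own boundary identity yields. With $\int_{\Sigma_\epsilon}(Lu\,v-u\,Lv)\,d\mu_g=\int_0^{2\pi}(u\,\partial_r v-v\,\partial_r u)\,\epsilon\,d\theta$, the surviving log--smooth terms are $\mathsf{u}\,\beta/r$ and $-\mathsf{v}\,\alpha/r$, which after multiplying by $\epsilon\,d\theta$ give $-2\pi(\alpha\mathsf{v}(0)-\mathsf{u}(0)\beta)$, the opposite sign of what you claim. In each pole--smooth Wronskian \emph{both} halves survive: $u_0\,\partial_r v_2$ and $-v_2\,\partial_r u_0$ each contribute $2\pi\Re(a\,\partial_z\mathsf{v}(0))$ after the angular integration, so the pair gives $+4\pi\Re(a\,\partial_z\mathsf{v}(0))$, not $-2\pi\Re(a\,\partial_z\mathsf{v}(0))$. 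A model check with $u=\Re(\frac{1}{z})$ near $p$ (cut off away from $p$) and $v=x$ gives $\int(\Delta u\,v-u\,\Delta v)=2\pi=4\pi\Re(a\,\partial_z\mathsf{v}(0))$, which pins down sign and constant. A consistent computation along your lines therefore yields
\begin{align*}
\int_\Sigma Lu\,v-u\,Lv\,d\mu_g=-2\pi(\alpha\mathsf{v}(0)-\mathsf{u}(0)\beta)+4\pi\Re(a\,\partial_z\mathsf{v}(0)-\partial_z\mathsf{u}(0)\,b)
\end{align*}
rather than the stated right--hand side. You are in good company: the paper's own proof computes $-\int_\Sigma$ and then states the result for $+\int_\Sigma$, and it retains only two of the four pole--smooth cross terms, discarding $\partial_r\mathsf{u}\,\Re(\frac{b}{z})$ and $\Re(\frac{a}{z})\,\partial_r\mathsf{v}$ even though their angular averages do not vanish. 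The discrepancy is immaterial for every later use of the lemma (all applications only extract vanishing conditions from it), but as a proof of the displayed identity your bookkeeping does not close and the two decisive bullets must be carried out explicitly.
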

\begin{proof}
First we note that $Lu\,v - u \, Lv= \Delta_gu \,v - u\,\Delta_gv$ and 
\[ -\int_\Sigma Lu\,v - u \, Lv \, d\mu_g= -\lim_{\epsilon \to 0} \int_{\Sigma\setminus D_\epsilon(p)} Lu\,v - u \, Lv \,d\mu_g= \lim_{\epsilon\to 0 } \int_{\partial D_{\epsilon}(p) } \frac{\partial u}{\partial \nu } v -  u\frac{\partial v}{\partial \nu }\;   \]
where $\nu$ denotes the outer normal to $\partial D_{\epsilon}(p)$. 
As mentioned before the operators $L$ and $\Delta_g$ are conformally invariant. Therefore, we may perform the calculations with respect to the metric $\delta_{\C}$ of the chart. Thus we get that
\begin{align*}
	&\int_{\partial D_\epsilon(p)} \frac{\partial u}{\partial \nu } v -  u\frac{\partial v}{\partial \nu }= \int_{\partial{B_\epsilon}} \frac{\partial u}{\partial r } v -  u\frac{\partial v}{\partial r } \\
	=& \int_{\partial B_\epsilon} \left(\Re(- \frac{a}{rz}) + \frac{\alpha}{r} + \partial_r\mathsf{u} \right)\left(\Re(\frac{b}{z}) + \beta \ln(r) + \mathsf{v}\right)\\
	 &- \left(\Re(\frac{a}{z}) + \alpha \ln(r) + \mathsf{u} \right)\left(-\Re(\frac{b}{rz}) + \frac{\beta}{r}  + \partial_r\mathsf{v}\right).
\end{align*}
Using $\int_{\partial D_\epsilon} \frac{\eta(r)}{z} =0$ for any function $\eta$ that only depends on $r$ we are left with 
\begin{align*}
	&\int_{\partial{B_\epsilon}} \frac{\partial u}{\partial r } v -  u\frac{\partial v}{\partial r } \\
	=& \int_{\partial B_\epsilon} \left(\Re(- \frac{a}{rz})\mathsf{v} - \mathsf{u}\Re(- \frac{b}{rz})\right) + \left(\frac{\alpha}{r}\mathsf{v} - \mathsf{u}\frac{\beta}{r}\right) + \left(\partial_r \mathsf{u} \mathsf{v} - \mathsf{u}\partial_r \mathsf{v} \right)\\
	=& I_\epsilon + II_\epsilon + III_\epsilon.
\end{align*}
The integrand in the last term is clearly bounded which implies $III_\epsilon = O(\epsilon)$. Since $\mathsf{v}=\mathsf{v(0)}+ O(z), \mathsf{u}=\mathsf{u(0)}+ O(z)$ we conclude that 
\[ II_\epsilon = 2\pi \left( \alpha \mathsf{v}(0) - \mathsf{u}(0) \beta \right) + O(\epsilon).\]
Finally we may expand $\mathsf{v}(z)= \mathsf{v}(0) + \partial_z \mathsf{v}(0)z + \partial_{\bar{z}}\mathsf{v}(0) \bar{z} + O(z^2)$ and therefore using that $\bar{z} = \frac{r^2}{z}$ 
\begin{align*} \int_{\partial B_\epsilon} \frac{a}{rz}\mathsf{v} &= \int_{\partial B_\epsilon} \frac{a \mathsf{v}(0)}{rz} + \frac{a \partial_z\mathsf{v}(0)}{r} + \frac{a \partial_{\bar{z}}\mathsf{v}(0)r}{z^2} + O(1)\\
&= 0 + 2\pi a \partial_z \mathsf{v}(0) + 0 + O(\epsilon). 
\end{align*}
Together with the equivalent expansion for $\mathsf{u}$ we conclude the lemma, since 
\[ I_\epsilon = - 2\pi \Re\left(a \partial_z \mathsf{v}(0) - \partial_z \mathsf{u}(0) b\right)+ O(\epsilon).\]
\end{proof}

This lemma suggests to introduce the following linear bounded operators on $E$, (compare \cite[section 5]{PerezRos}):
Given a function $u \in E$ with $u = \sum_i \Re(a_i h_i) + \sum_i \alpha_i l_i+ \mathsf{u}$, $\mathsf{u} \in C^{2,\alpha}(\Sigma)$, we set 
\begin{align*} \mathsf{L}(u) &:= \alpha \in \R^m & \mathsf{S}(u)&:= a\in \C^m \\
\mathsf{H}(u)&:=(\mathsf{u}(p_1), \cdots , \mathsf{u}(p_m)) \in \R^m & \mathsf{Z}(u)&:=(\partial_z\mathsf{u}(p_1), \cdots , \partial_z\mathsf{u}(p_m)) \in \C^m \end{align*}
where we denoted by an abused of notation $\partial_z\mathsf{u}(p_i)$ the value $\partial_z\mathsf{u}(0)$ where $\partial_z$ is calculated with respect to the fixed conformal chart around $p_i$. Note first that we have $E_0 = E \cap \{ u \colon \mathsf{S}(u) = 0 \}$ and $E_1 = E\cap \{ u \colon \mathsf{L}(u) =0 \}$. 

Using a partition of unity subordinate to the open sets $D(p_i)$, $i=1, \dotsc, m$ and $\Sigma \setminus \{p_1, \dotsc, p_m\}$ we directly conclude from the previous lemma 

\begin{corollary}\label{cor:integration by parts}
Given $u,v \in E$, we have that
\begin{align*} &\int_\Sigma Lu\,v - u \, Lv \,d\mu_g= \int_{\Sigma} \hat{L}u\,v - u \, \hat{L}v \, d\mu_{\hat{g}}\\
&= 2\pi\left( \mathsf{L}(u)\cdot \mathsf{H}(v) - \mathsf{H}(u)\cdot \mathsf{L}(v)\right) -2\pi \Re \left( \mathsf{S}(u)\cdot \mathsf{Z}(v) - \mathsf{Z}(u)\cdot \mathsf{S}(v) \right)\,.
\end{align*}
\end{corollary}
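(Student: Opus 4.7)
The plan is to prove the two asserted equalities one after the other; both are essentially bookkeeping on top of already-established facts.

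For the first equality, the conformal invariance that motivated the compactification does the job. Since $\Sigma$ is two-dimensional and $\hat{g} = \lambda g$, we have $d\mu_{\hat{g}} = \lambda\, d\mu_g$ and $\Delta_{\hat{g}} = \lambda^{-1}\Delta_g$, and likewise $\abs{\nabla n_X}_{\hat{g}}^2 = \lambda^{-1} \abs{\nabla n_X}_g^2$, so $(\hat{L}u)\,v\,d\mu_{\hat{g}} = (Lu)\,v\,d\mu_g$ as pointwise densities. Subtracting the analogous identity with the roles of $u,v$ exchanged and integrating gives the first equality.

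For the second equality, the strategy — already suggested in the statement — is to localise via a partition of unity and reduce to Lemma~\ref{lem.integration by parts formula} applied once per end. Choose a partition of unity $\{\chi_0, \chi_1, \dotsc, \chi_m\}$ on $\Sigma$ subordinate to the cover $\{\Sigma\setminus\{p_1,\dotsc,p_m\}, D(p_1), \dotsc, D(p_m)\}$, with the extra property that $\chi_i \equiv 1$ on a small disk $D_{\epsilon/2}(p_i)$ for each $i\ge 1$, and write $u = u_0 + \sum_{i=1}^m u_i$ with $u_i := \chi_i u$. For each $i\ge1$, the function $u_i$ lies in $E$, has support in $D(p_i)$ disjoint from the other ends, and since $\chi_i\equiv 1$ near $p_i$ its expansion at $p_i$ has precisely the same coefficients $a_i, \alpha_i, \mathsf{u}(p_i), \partial_z\mathsf{u}(p_i)$ as $u$. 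Thus Lemma~\ref{lem.integration by parts formula} applies to the pair $(u_i,v)$ and gives
\begin{align*}
\int_\Sigma (Lu_i \cdot v - u_i \cdot Lv)\,d\mu_g = 2\pi\bigl(\alpha_i \mathsf{v}(p_i) - \mathsf{u}(p_i)\beta_i\bigr) - 2\pi\Re\bigl(a_i \partial_z\mathsf{v}(p_i) - \partial_z\mathsf{u}(p_i) b_i\bigr).
\end{align*}
The remaining piece $u_0$ is smooth and compactly supported in $\Sigma\setminus\{p_1,\dotsc,p_m\}$, a region on which $v$ is $C^{2,\alpha}$; since $u_0$ vanishes in a neighbourhood of each $p_j$, the standard Green identity applies on any domain containing $\operatorname{supp}(u_0)$ and in whose interior $v$ is smooth, yielding $\int_\Sigma(Lu_0\cdot v - u_0\cdot Lv)\,d\mu_g = 0$.

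Summing over $i$ and recognising, from the definitions of the boundary operators, the identities $\sum_i \alpha_i \mathsf{v}(p_i) = \mathsf{L}(u)\cdot\mathsf{H}(v)$, $\sum_i \mathsf{u}(p_i)\beta_i = \mathsf{H}(u)\cdot\mathsf{L}(v)$, $\sum_i a_i \partial_z\mathsf{v}(p_i) = \mathsf{S}(u)\cdot\mathsf{Z}(v)$, and $\sum_i \partial_z\mathsf{u}(p_i) b_i = \mathsf{Z}(u)\cdot\mathsf{S}(v)$, produces the formula claimed in the corollary. There is no real obstacle: the two points that must be checked are that the cut-off preserves the singular coefficients at each end (guaranteed by $\chi_i\equiv 1$ near $p_i$) and that the $u_0$-contribution vanishes despite $v$ being singular (guaranteed because $u_0$ is supported away from the singularities of $v$).
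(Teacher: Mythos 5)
Your proof is correct and follows essentially the same route as the paper: the paper's own argument for this corollary is precisely a partition of unity subordinate to $D(p_1),\dotsc,D(p_m)$ and $\Sigma\setminus\{p_1,\dotsc,p_m\}$ combined with Lemma~\ref{lem.integration by parts formula} applied end by end. Your write-up merely spells out the details the paper leaves implicit (that the cut-off preserves the singular coefficients at each end, that the compactly supported smooth piece contributes nothing, and the pointwise density identity behind the first equality).
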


We are now interested in the image of $\hat{L}: E_1 \to C^{0,\alpha}(\Sigma)$. For any linear subspace $F\subset C^{0,\alpha}(\Sigma)$ we denote with $F^\perp$ its $L^2$-orthogonal in $C^{0,\alpha}(\Sigma)$ with respect to the metric $\hat{g}$.  Note that by our choice of $\lambda$ we have that $E \subset L^2(\Sigma, \mu_{\hat{g}})$.  By classical functional analysis we have that if $F\subset C^{0,\alpha}(\Sigma)$ is finite dimensional, there exist bounded orthogonal projections onto $F$ and $F^\perp$. So we have that
\[ C^{0,\alpha}(\Sigma)= F\oplus F^\perp\,. \]
Furthermore we have $F^{\perp \perp}= F$. Note that if for another linear subspace $W$ we have $F^\perp \subset W$ then $W^\perp$ is finite dimensional and we have $C^{0,\alpha}(\Sigma)= W\oplus W^\perp$ and $W^{\perp\perp}=W$. 
We define the following sets
\begin{align*} K&:=\operatorname{ker}(\hat{L}|_{C^{2,\alpha}(\Sigma)})\\
K_2&:=\hat{L}(E)^\perp, \;K_0:=\hat{L}(E_0)^\perp, \;K_1:= \hat{L}(E_1)^\perp
 \end{align*}
By classical elliptic regularity theory on compact manifolds we know that $\hat{L}$ is a Fredholm operator and so $K$ is finite dimensional. Since $\hat{L}$ is self-adjoint we have additionally that $\hat{L}(C^{2,\alpha}(\Sigma))= K^\perp$. But this has the consequences that for $i=0,1$
\[ \hat{L}(E)\supset \hat{L}(E_i) \supset \hat{L}(C^{2,\alpha}(\Sigma))= K^\perp\,.\]
In particular this implies that $K_i, i=0,1,2$ are finite dimensional, $K_2\subset K_i \subset K$ for $i=0,1$ and 
\[ \hat{L}(E)=K_2^{\perp}, \hat{L}(E_0)=K_0^{\perp}, \hat{L}(E_1)=K_1^{\perp}. \]
Compare \cite[Lemma 5.2]{PerezRos} for some results on $K_0$. But unfortunately their dimensional bounds on $K_0$ are too weak for us to provide the existence of logarithmic growing Jacobi fields in general. Hence we are particularly interested in $\hat{L}(E_1)$ and therefore we would like to identify $K_1$. Recall that the components of $n_X$ are elements in $K$.  

Before we are able to state our dimensional estimate on $K_1$, we need to recall that 
\[ n_X: \Sigma \to \Sp^2 \]
is a holomorphic map. 
In particular, we can associate to $n_X$ its ramification divisor 
\begin{equation}\label{eq:ramification devisor} R(n_X)= r(q_1) q_1 + \dotsm + r(q_r) q_r\,\end{equation}
where $r(q_i)$ denotes the vanishing order of $\partial_zn_X$. After an appropriate rotation and an appropriate choice of local coordinates around $q_i$ that
\begin{equation}\label{eq:good coordinates for an ramification point} g(z)= z^{r(q_i)+1} \text{ for } P\circ g(z)=n_X(z)\end{equation}
where $P(w)=\frac{1}{1+\abs{w}^2}\left(w + \bar{w}, -i(w-\bar{w}), -1 + \abs{w}^2\right)$ is the inverse of the stereographic projection from the north pole.

\begin{proposition}
Under the above assumptions we have that
\begin{equation}\label{eq:characterization of K_1}
	K_1=\{ u \in K\colon \mathsf{Z}(u)=0 \}\,.
\end{equation}
In particular we have 
\begin{equation}\label{eq:n_X in K_1}
u(z):=a \cdot n_X(z) \in K_1 \text{ for each } a\in \R^3.
\end{equation}
If $\Sigma$ is a topological sphere we get that 
\begin{equation}\label{eq:dimension of K_1}
K_1=\{ a\cdot n_X \colon a \in \R^3 \}.
\end{equation}	
\end{proposition}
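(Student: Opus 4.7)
Part~1 follows directly from Corollary~\ref{cor:integration by parts}. For $u\in K$ we have $u\in C^{2,\alpha}(\Sigma)$ and $\hat L u =0$, hence $\mathsf{L}(u)=\mathsf{S}(u)=0$. Any $v\in E_1$ satisfies $\mathsf{L}(v)=0$ by definition. Substituting into the integration-by-parts identity collapses the boundary terms to
\[ \int_\Sigma u \, \hat L v \, d\mu_{\hat g} \;=\; -2\pi\,\Re\!\bigl( \mathsf{Z}(u)\cdot\mathsf{S}(v) \bigr). \]
Since $\mathsf{S}:E_1\to\C^m$ is surjective by the explicit construction of the functions $s_i$, the right-hand side vanishes for every $v\in E_1$ iff $\mathsf{Z}(u)=0$, which is exactly \eqref{eq:characterization of K_1}.

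For Part~2, I would invoke the classical identity $\Delta_g n_X + |A|^2 n_X =0$ on any minimal immersion into $\R^3$, which gives $L(a\cdot n_X)=0$ for every $a\in\R^3$. The Weierstra\ss{} analysis carried out in the proof of Lemma~\ref{lem:M} writes the Gauss map as the meromorphic function $g=a/b$ with $a,b$ non-vanishing at each planar end $p_i$, and the planarity condition $(b_i^*)$ is exactly $g'(p_i)=0$. Equivalently, $\partial_z n_X(p_i)=0$ and therefore $\mathsf{Z}(a\cdot n_X)=0$. Since $n_X$ extends smoothly across every planar end, $a\cdot n_X \in C^{2,\alpha}(\Sigma)$ with $\hat L(a\cdot n_X)=0$, so $a\cdot n_X\in K$, and Part~1 then places it in $K_1$.

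Part~3 is the main technical point and the step I expect to be hardest. The starting observation is that $|A|^2=-2K_g$ together with the conformal relation $n_X^* g_{\Sp^2}=-K_g\,g$ yields the pointwise identity
\[ L u \;=\; -K_g\bigl(\Delta_{n_X^* g_{\Sp^2}} u + 2u \bigr), \]
valid away from the ramification locus of the Gauss map. Thus $Lu=0$ translates, outside the branch points of $n_X$, into the $(-2)$-eigenvalue equation for the round Laplacian on $\Szw$; the corresponding eigenspace is exactly three-dimensional and spanned by the restrictions of linear functions $x\mapsto a\cdot x$, which on $\Sigma$ correspond to the Jacobi fields $a\cdot n_X$. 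The plan for Part~3 is to promote this from a pointwise to a global statement by following the Montiel--Ros~\cite{MontielRos} framework recalled at the beginning of this section together with the P\'erez--Ros~\cite{PerezRos} set-up: realize each $u\in K_1$ as a holomorphic section of an auxiliary line bundle on $\Szw$ built from the ramification divisor of $n_X$, and use a Riemann--Roch count on $\Szw$ (with $\deg n_X = m-1$ and $\deg R(n_X) = 2m-4$) to bound the dimension of admissible sections by~$3$.

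The decisive role of the hypothesis $\mathsf{Z}(u)=0$ shows up exactly at the ends $p_i$, which by Part~2 are always branch points of $n_X$: the vanishing of $\partial_z u$ there provides the Taylor-coefficient condition that allows $u$ to be treated as single-valued across these branch points. The main obstacle I anticipate is handling the branch points of $n_X$ that are not located at the ends, which must appear as soon as $m>4$, and verifying that no additional vanishing conditions need to be imposed beyond $\mathsf{Z}(u)=0$ to obtain the sharp bound $\dim K_1 \le 3$. Once this is established, combining with Part~2 gives $K_1=\{a\cdot n_X: a\in\R^3\}$, since the three functions $n_X^1, n_X^2, n_X^3$ are linearly independent on $\Szw$ (the Gauss map being surjective onto $\Szw$).
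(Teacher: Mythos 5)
Your Part~1 and Part~2 are correct and follow essentially the paper's own route: Part~1 is the same application of Corollary~\ref{cor:integration by parts} together with the surjectivity of $\mathsf{S}$ on $E_1$, and Part~2 amounts to the same observation that every embedded planar end is a ramification point of the Gauss map (your detour through the Weierstra\ss{} data of Lemma~\ref{lem:M} and the condition $(b_i^*)$ is just a more explicit way of saying $r(p_i)\ge 1$).

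Part~3, however, contains a genuine gap. The pointwise identity $Lu=-K_g(\Delta_{n_X^\#g_{\Sp^2}}u+2u)$ is correct away from the ramification locus, but it does not give you a three-dimensional solution space, because $n_X:\Sigma\to\Sp^2$ is a degree $m-1$ branched cover and a Jacobi field on $\Sigma$ does not descend to a single-valued eigenfunction on $\Sp^2$; indeed the full kernel $K$ is in general strictly larger than $3$ (the paper notes the nullity of the Morin surface is $5$). Everything therefore hinges on showing that the extra condition $\mathsf{Z}(u)=0$ at the $m$ ends cuts the space down to exactly the span of the coordinates of $n_X$, and this is precisely the step you defer (``the main obstacle I anticipate\dots verifying that no additional vanishing conditions need to be imposed''). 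The paper closes this step not by a Riemann--Roch count of sections but by the Montiel--Ros correspondence: to each $u\in K_1$ it associates the branched minimal immersion $Y(u)=un_X+\abs{\partial_zn_X}^{-2}(\partial_{\bar z}u\,\partial_zn_X+\partial_zu\,\partial_{\bar z}n_X)$ with Gauss map $n_X$ and possible ends only at ramification points; the hypothesis $\mathsf{Z}(u)=0$ lowers the multiplicity of a potential end of $Y(u)$ at $p_i$ to at most $r(p_i)-1$, and combining the total-curvature/Riemann--Hurwitz identity \eqref{eq:admissibility condition between ramification and ends for spheres} for $Y(u)$ with the estimate $\abs{R_1}+\abs{M_1}\le\abs{M_0}-4$ coming from \eqref{eq:admissiblity for X} yields a strict inequality contradicting admissibility unless $Y(u)$ is constant, whence $u=a\cdot n_X$. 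Your line-bundle plan is morally in the same Montiel--Ros circle of ideas and may well be completable, but as written the sharp bound $\dim K_1\le 3$ is asserted rather than proved; also note your side claim that non-end ramification points ``must appear as soon as $m>4$'' is not forced, since the excess ramification $2m-4-m=m-4$ could in principle sit entirely at the ends as higher-order ramification.
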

\begin{proof}
By definition of $V_1$ the map
\[ \mathsf{S}: V_1 \to \C^m \]
is one-to-one, in particular $\mathsf{S}: E_1 \to \C^m$ is onto. As noted before, for each $v \in E_1$ we have $\mathsf{L}(v)=0$. Since $K_1=\hat{L}(E_1)^\perp \subset K=\ker(\hat{L}|_{C^{2,\alpha}(\Sigma)})$ we deduce for $u\in K_1$ by Corollary~\ref{cor:integration by parts} that 
\[ \Re\left(\mathsf{Z}(u)\cdot \mathsf{S}(v)\right) = 0 \text{ for all } v \in E_1;\]
	giving precisely \eqref{eq:characterization of K_1}.	
Since every end $p_i, i=1, \dotsc, m$, of $\Sigma$ must be a ramification point of $n_X$, see \eqref{eq:expansion Gauss curvature}, we have $r(p_i)\ge 1$ for $i=1, \dotsc, m$. This implies \eqref{eq:n_X in K_1} due to \eqref{eq:characterization of K_1}.

	Now we want to proof the precise description \eqref{eq:dimension of K_1}. First let us recall that if $Y:\Sigma \to \R^n$ is  a branched minimal immersion with finite total curvature (and therefore complete ends), we have 
\[ \int_{\Sigma} K \, d\mu_{g_Y}= 2\pi \bigl( \chi(\Sigma) - \sum_{q \in \text{ ends of } Y} (e(q)+1) + \sum_{p \in \text{ branch points of } Y} b(p) \bigr)\,; \]
where $g_Y=Y^\sharp \delta_{\R^3}$, $e(q)$ denotes the multiplicity of an end $q$ of $Y$ and $b(p)$ the order of a branch point $p$, compare \cite[Theorem 1]{Fang}.
On the the other hand we have $n_Y^\sharp \delta_{\mathcal{S}^2}= -K d\mu_{g_Y}$ and thus 
\[ - \int_\Sigma K \, d\mu_{g_Y}= 4\pi \deg(n_Y)\,. \]
Since $n_Y: \Sigma \to \mathcal{S}^2$ is a holomorphic map, it is a covering map with associated ramification devisor $R(n_Y)$, \eqref{eq:ramification devisor} and so we have by the Riemann-Hurwitz formula that 
\[ \chi(\Sigma) = \chi(\mathcal{S}^2) \deg(n_Y) - \abs{R}(n_Y) = 2 \deg(n_Y) - \abs{R}(n_Y),\]
where $\abs{R}(n_Y)= \sum_{i} r(q_i)$.
Combining all the above equations we obtain for any non-constant branched immersion $Y$ 
\begin{equation}\label{eq:admissibility condition between ramification and ends}
	-\abs{R}(n_Y)=2\chi(\Sigma) - \sum_{q \in \text{ ends of } Y} (e(q)+1) + B(Y) 
\end{equation}
where we used the abbreviation $B(Y)=\sum_{p \in \text{ branch points of } Y} b(p)$. For us it will only important that $B(Y)\ge 0$. \\

Now we will restrict ourselfs to the situation of $\Sigma = \mathcal{S}^2$ in which case \eqref{eq:admissibility condition between ramification and ends} reads
\begin{equation}\label{eq:admissibility condition between ramification and ends for spheres} \abs{R}(n_Y)=\sum_{q \in \text{ ends of } Y} (e(q)+1) - B(Y)- 4.\end{equation}
We denote by $M_0=\{p_1, \dotsc, p_m\}$ the set of ends of $X$. By $M_1=\{ p_i \in M_0 \colon r(p_i)>1\}$ the subsets where the ramification is bigger than $1$ and finally by $R_1:=\{ q_i \colon q_i \notin M_0\}$ the set of ramification points of $n_X$ that are not an end. 
Evaluating \eqref{eq:admissibility condition between ramification and ends for spheres} for $X$, where each end is embedded, gives
\begin{equation}\label{eq:admissiblity for X}
\abs{R}(n_X)=2 \abs{M_0}-4. 	
\end{equation}
As noted before $r(p_i)\ge 1$ for each $p_i\in M_0$ and $r(p_i)\ge 2$ for each $p_i \in M_1$. So the above implies that 
\begin{equation}\label{eq:estimate on R_1}
	\abs{R_1}+\abs{M_1} \le \abs{M_0}-4
\end{equation}

Suppose that $u \in K_1$ which implies that $u$ is a bounded solution of $Lu=0$ on $\Sigma \setminus \{ q_1, \dotsc, q_r \}$. Due to \cite[formula 3.6]{MontielRos}
\begin{equation}\label{eq:mapping to minimal surfaces} Y(u) = u n_X+ \frac{1}{\abs{\partial_z n_X}^2} \left(\partial_{\bar{z}} u \partial_z n_X + \partial_{z} u \partial_{\bar{z}} n_X\right)	
\end{equation}
defines a (possibly branched) minimal immersion with possible ends in $\{q_1, \cdots, q_r \}$ and Gauss map $n_X$. Suppose $Y(u)$ is not constant. From \eqref{eq:characterization of K_1} we have $\partial_zu(p_i)=0$ for all $p_i \in M_0$. But this implies that  we have 
\[ \abs{Y(z)} \le C \abs{z}^{1-r(p_i)} + O(1) \] in local conformal coordinates. In particular, $p_i$ is at most an end of multiplicity $r(p_i)-1$ and therefore $Y(u)$ does not have ends in $M_0\setminus M_1$. For each $q_i\in R_1$ we have just by the boundedness of $\partial_z u$ around $q_i$ that
\[ \abs{Y(z)}\le C \abs{z}^{-r(q_i)}. \]
In other words, $q_i$ is at most an end of multiplicity $r(q_i)$. Denoting by $M_Y$ the set of ends of $Y$ we estimate 
\begin{align*}
	&\sum_{q \in E_Y} (e(q)+1) - B(Y)- 4 \le \sum_{q \in M_Y\cap M_1} (e(q)+1) + \sum_{q \in M_Y \cap R_1} (e(q)+1)- 4\\
	&\le \sum_{q \in M_Y\cap M_1} r(q) + \sum_{q \in M_Y\cap R_1} r(q) + \abs{M_Y\cap R_1} -4\\
	&\le \sum_{i=1}^r r(q_i)-\abs{M_0}+ \abs{M_1} + \abs{M_Y\cap R_1} -4\\
	&=\abs{R}(n_X)-\abs{M_0}+\abs{M_1}+ \abs{M_Y\cap R_1} -4<\abs{R}(n_X)\,,
\end{align*}
where we used \eqref{eq:estimate on R_1}. This contradicts \eqref{eq:admissibility condition between ramification and ends for spheres}. Hence $Y(u)$ must be constant which implies by \cite[Proposition 2]{MontielRos} that $u=a \cdot n_X$ for some $a \in \R^3$. 

\end{proof}

\begin{proposition}\label{prop:space of logarithmic growing Jacobi fields on spheres}
	Let $\Sigma$ be a topological sphere. Then we have  
	\[ J_{\log}:=\{ u \in E \colon \mathsf{L}(u) \neq 0 \text{ and } Lu=0 \text{ on } \Sigma \setminus\{ p_1, \dotsc, p_m \} \} \]
	is linearly isomorphic to $\R^N\setminus \{0\}$ with $$N= m- \dim\operatorname{span}\{ n_X(p_i) \colon i=1, \dotsc, m \}.$$ 
\end{proposition}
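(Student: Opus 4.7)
The plan is to identify $J_{\log}$ with the image of the map $\mathsf{L}$ restricted to $\ker(\hat{L}|_E)$ and to show that this image is exactly the $N$-dimensional subspace
\[ \Lambda := \Bigl\{ \alpha \in \R^m \colon \sum_{i=1}^m \alpha_i n_X(p_i) = 0 \Bigr\} \subset \R^m. \]
The isomorphism claim then reads: the restriction $\mathsf{L}: J_{\log} \to \Lambda \setminus \{0\}$ is a (linear, up to choosing a section in $\ker\mathsf{L}$) surjection onto $\Lambda \setminus \{0\} \cong \R^N \setminus \{0\}$.

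First I would reduce the problem to a Fredholm solvability question. Given any $\alpha \in \R^m$, set $w_\alpha := \sum_{i=1}^m \alpha_i l_i \in V$, so that $\mathsf{L}(w_\alpha)=\alpha$ and $\mathsf{S}(w_\alpha)=0$. Any $u \in E$ with $\mathsf{L}(u)=\alpha$ can be written uniquely as $u = w_\alpha + v$ with $v \in E_1$. Then $\hat L u = 0$ becomes $\hat L v = -\hat L w_\alpha =: f_\alpha \in C^{0,\alpha}(\Sigma)$, and this equation admits a solution $v \in E_1$ if and only if $f_\alpha \in \hat L(E_1) = K_1^\perp$. So the image of $\mathsf{L}|_{\ker(\hat L|_E)}$ is exactly $\{\alpha \in \R^m : f_\alpha \perp K_1 \text{ in } L^2(\hat g)\}$.

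Next I would translate the orthogonality condition into a condition on $\alpha$ via the integration by parts formula from Corollary~\ref{cor:integration by parts}. For any $\phi \in K_1 \subset C^{2,\alpha}(\Sigma)$ we have $\hat L \phi = 0$, $\mathsf{L}(\phi) = 0$, $\mathsf{S}(\phi) = 0$, and by the characterization \eqref{eq:characterization of K_1} also $\mathsf{Z}(\phi)=0$. Since moreover $\mathsf{S}(w_\alpha)=0$ and $\mathsf{H}(w_\alpha)=0$, Corollary~\ref{cor:integration by parts} gives
\[ \int_\Sigma f_\alpha\, \phi \, d\mu_{\hat g} \;=\; \int_\Sigma \hat L w_\alpha \cdot \phi - w_\alpha \cdot \hat L \phi\, d\mu_{\hat g} \;=\; 2\pi\,\mathsf{L}(w_\alpha) \cdot \mathsf{H}(\phi) \;=\; 2\pi\,\alpha \cdot \mathsf{H}(\phi). \]
Using the full description $K_1 = \{a \cdot n_X : a \in \R^3\}$ from \eqref{eq:dimension of K_1}, this vanishing for every $\phi$ is equivalent to $a \cdot \sum_i \alpha_i n_X(p_i) = 0$ for all $a \in \R^3$, i.e.\ to $\alpha \in \Lambda$.

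Finally I would read off the dimension count: $\dim \Lambda = m - \dim\operatorname{span}\{n_X(p_i) \colon i=1,\dotsc,m\} = N$. Elements of $J_{\log}$ are exactly those $u \in \ker(\hat L|_E)$ with $\mathsf{L}(u) \neq 0$, so $\mathsf{L}$ surjects $J_{\log}$ onto $\Lambda \setminus \{0\} \cong \R^N\setminus\{0\}$; picking any linear right inverse on $\Lambda$ (adding to it elements of $\ker\mathsf{L}\cap\ker(\hat L|_E)$ if one wants a canonical representative) yields the claimed linear isomorphism. The main subtlety I expect is purely bookkeeping: verifying that $w_\alpha \in V \subset E$ (so that the boundary terms have the form used in Corollary~\ref{cor:integration by parts}) and that the Fredholm alternative really applies to $\hat L: E_1 \to C^{0,\alpha}(\Sigma)$ with cokernel $K_1$, both of which are already established in the setup.
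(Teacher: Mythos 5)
Your proposal is correct and follows essentially the same route as the paper: both arguments hinge on the auxiliary function $w_\alpha=\sum_i\alpha_i l_i$, the integration by parts formula of Corollary~\ref{cor:integration by parts}, the identification $K_1=\{a\cdot n_X\}$, and the solvability criterion $\hat L(E_1)=K_1^\perp$; the paper merely splits the argument into the inclusion $\mathsf{L}(J_{\log})\subset\ker A$ and surjectivity, while you package both directions into one solvability statement. (Only a cosmetic slip: with $f_\alpha:=-\hat Lw_\alpha$ your displayed identity should carry a minus sign, which does not affect the vanishing condition.)
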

\begin{proof}
Let us define the linear map $A: \R^m \to \R^3$ given by
\[ A\alpha = \sum_{i=1}^m \alpha_i n_X(p_i).\]
Note that $\ker A \cong \R^N$. Hence the proposition follows if we show that $\mathsf{L}: J_{\log} \to \ker A\setminus\{0\}$ is a bijection. 

For the inclusion $\mathsf{L}(J_{\log})\subset \ker(A)$ recall that for each $a \in \R^3$ we have that $v_a:=a\cdot n_X$ is an element of $K_1$ i.e.\ $\mathsf{Z}(v_a)=0$. Since $v_a$ is bounded, we have as well that $\mathsf{L}(v_a)=0, \mathsf{S}(v_a)=0$. Now given any element $u \in J_{\log}$, we apply Corollary~\ref{cor:integration by parts} to the functions $v_a$ and $u$ and obtain  
\[ 0 = 2\pi \mathsf{H}(v_a)\cdot \mathsf{L}(u). \]
If $\mathsf{L}(u)=\alpha$ this is equivalent to 
\[ a\cdot \left(\sum_{i=1}^m  n_X(p_i)\alpha_i\right)=0 \text{ for all } a \in \R^3. \]
Hence the inclusion follows.

It remains to show that $\mathsf{L}: J_{\log} \to \ker{(A)}\setminus \{0\}$ is onto. Let $\alpha \in \ker{(A)}\setminus\{0\}$ i.e.\ $\sum_{i=1}^m \alpha_i n_X(p_i)=0$. Consider the function $w= \sum_{i=1}^m \alpha_i l_i \in E$ where the $l_i$'s are the functions fixed in \eqref{eq:log functions}. So we have $\mathsf{L}(w)=\alpha$ and $\mathsf{S}(w)=0$. 
Applying Corollary~\ref{cor:integration by parts} to the function $w$ and $v_a$ we conclude for each $a \in \R^3$
\[ \int_\Sigma Lw\, v_a = 2\pi ( \mathsf{L}(w) \cdot \mathsf{H}(v_a) ) = 2\pi a  \cdot \sum_{i=1}^m \alpha_i n_X(p_i) = 0. \]
Hence by \eqref{eq:dimension of K_1} we conclude that $Lw \in K_1^\perp$. But then there is $v \in E_1$ with $Lv = Lw$. Since $\mathsf{L}(v)=0$ by the definition of $E_1$ the function $u=w-v$ satisfies
\[ Lu=0 \text{ and } \mathsf{L}(u)=\mathsf{L}(w)=\alpha\,.\]
This concludes the proof.
\end{proof}

\begin{lemma}\label{lem.X replacement function}
Let $\Sigma$, $J_{log}$ as above, then for each $u \in J_{log}$ there is a $\chi$ such that $\frac{\chi}{\abs{X}^2} \in W^{2,2}(\Sigma, d\mu_{\hat{g}})$  with the additional properties that 
\begin{itemize}
\item $\abs{\Delta_g \chi}^2 =16$ on $\Sigma$;
\item around each end $p_i$ it has in local conformal coordinates an expansion of the form \[ \chi(z) = \sigma_i \abs{X(z)}^2 + \mathsf{x}(z)\] for some $\sigma_i \in \{-1,+1\}$ and some smooth $\mathsf{x}$;
\item if $\vec{\sigma}=(\sigma_1, \dotsc, \sigma_m)$ then $\vec{\sigma}\cdot \mathsf{L}(u) \neq 0$.
\end{itemize}
\end{lemma}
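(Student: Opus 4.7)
The plan is to first choose signs $\sigma_i$ so that the last bullet is automatic, and then to build $\chi$ by extending $\sigma_i\abs{X}^2$ from each end and harmonically correcting on a partition of $\Sigma$. Since $u\in J_{\log}$ forces $\alpha:=\mathsf{L}(u)\in\R^m\setminus\{0\}$, I would take $\sigma_i:=\operatorname{sign}(\alpha_i)$ whenever $\alpha_i\neq 0$ and $\sigma_i:=+1$ otherwise, so that
\[
\vec\sigma\cdot \mathsf{L}(u)=\sum_{i=1}^m \abs{\alpha_i}>0,
\]
which settles the last item. Only the first two bullets remain.

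For the construction of $\chi$ I would fix closed regions $\bar\Omega_1,\dotsc,\bar\Omega_m$ forming a partition of $\Sigma$ with $p_i\in\operatorname{int}\Omega_i$, and on each $\Omega_i$ set
\[
\chi\big|_{\Omega_i}:=\sigma_i\abs{X}^2+h_i,
\]
where $h_i\in C^\infty(\bar\Omega_i)$ is a $g$-harmonic corrector (i.e.\ $\Delta_g h_i=0$) chosen so that the pieces glue in $C^1$ across every interface $\partial\Omega_i\cap\partial\Omega_j$. Because $\Delta_g\abs{X}^2=4$ on the minimal surface and $\Delta_g h_i=0$, this gives $\Delta_g\chi=4\sigma_i\in\{\pm 4\}$ pointwise on $\operatorname{int}\Omega_i$, hence $\abs{\Delta_g\chi}^2=16$ on $\Sigma$, and in a conformal chart around $p_i$ the expansion $\chi=\sigma_i\abs{X}^2+\mathsf{x}$ holds with $\mathsf{x}=h_i$ smooth. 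The $W^{2,2}$ condition on $\chi/\abs{X}^2$ is then verified by splitting into two regimes: near $p_i$ we have $\chi/\abs{X}^2=\sigma_i+h_i\abs{\Psi}^2$, and since $\abs{\Psi}^2$ is smooth on $\Sigma$ and vanishes to order two at $p_i$, this extends smoothly with value $\sigma_i$; on $\Sigma\setminus\{p_1,\dotsc,p_m\}$ the function $\chi$ is $C^1$ with $L^\infty$ Laplacian, hence in $W^{2,p}_{\mathrm{loc}}$ for every $p<\infty$ by standard elliptic regularity, and multiplication by the smooth factor $\abs{\Psi}^2$ preserves $W^{2,2}$-regularity with respect to $\hat g$.

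The hard part is the existence of the harmonic correctors $h_i$ realizing the $C^1$ matching, because any $C^0$-but-not-$C^1$ gluing would place a distributional Dirac mass along the interface in $\Delta_g\chi$, destroying both the pointwise identity $\abs{\Delta_g\chi}^2=16$ and the $W^{2,2}$ regularity. Across an interface with $\sigma_i=\sigma_j$ the matching reduces to the standard elliptic transmission conditions $h_i=h_j$ and $\partial_\nu h_i=\partial_\nu h_j$, which are known to admit smooth harmonic solutions. Across an interface with $\sigma_i\neq\sigma_j$ one needs
\[
h_i-h_j=(\sigma_j-\sigma_i)\abs{X}^2,\qquad \partial_\nu(h_i-h_j)=(\sigma_j-\sigma_i)\partial_\nu\abs{X}^2,
\]
which is Cauchy data for a harmonic function on a smooth curve lying in the compact part $\Sigma\setminus\bigcup D(p_i)$ where $\abs{X}^2$ is smooth. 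I would handle this by choosing the partition geometrically adapted to the Gauss map (e.g.\ a Voronoi-type decomposition associated to the limit tangent planes at the ends), so that at each opposite-sign interface $\abs{X}^2$ appears as the real part of a holomorphic function extendable to both sides; solvability of the resulting Cauchy–Dirichlet problem for each $h_i$ then reduces to a finite-dimensional compatibility condition which, by genericity of the partition, can be arranged to hold.
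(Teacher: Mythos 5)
Your first step (taking $\sigma_i=\operatorname{sign}(\alpha_i)$, with $\sigma_i=+1$ when $\alpha_i=0$, so that $\vec\sigma\cdot\mathsf{L}(u)=\sum_i\abs{\alpha_i}>0$) is correct and in fact cleaner than the paper's treatment, which splits into the cases $\sum_i\alpha_i\neq 0$ (take $\chi=\abs{X}^2$) and $\sum_i\alpha_i=0$ (flip the sign at a single end with $\alpha_1\neq 0$). The regularity bookkeeping at the ends ($\chi/\abs{X}^2=\sigma_i+\mathsf{x}\abs{\Psi}^2$ smooth near $p_i$, interior $W^{2,p}$ from a bounded Laplacian) is also fine \emph{conditional on the construction existing}.

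The construction, however, has a genuine gap exactly where you flag ``the hard part,'' and the proposed repair does not close it. Requiring $\chi\in C^1$, $\chi\vert_{\Omega_i}=\sigma_i\abs{X}^2+h_i$ with $\Delta_g h_i=0$, forces you to prescribe, on every interface with $\sigma_i\neq\sigma_j$, \emph{both} the boundary values and the normal derivative of the harmonic function $h_i$ (given $h_j$). This is a Cauchy problem for the Laplace equation on a curve, which is overdetermined: by unique continuation a harmonic function on $\Omega_i$ is already determined by its Cauchy data on any boundary arc, and for given data a harmonic extension to all of $\Omega_i$ exists only if the data lie in the graph of the Dirichlet-to-Neumann map --- a constraint of \emph{infinite} codimension, not a ``finite-dimensional compatibility condition'' that genericity of the partition could arrange. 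With $m$ regions and several interfaces per region the system is even more overdetermined. The root cause is the insistence that the corrector be harmonic on each piece while $\Delta_g\chi$ jumps across a curve of codimension one.

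The paper avoids this entirely by not partitioning and not asking the corrector to be harmonic where the sign changes. It sets $w=\abs{X}^2-2\eta\abs{X}^2$ with $\eta$ a cutoff supported in an annulus around the flipped end, so that $\Delta_g w$ agrees with the target locally constant function $s\in\{\pm4\}$ except on that annulus, where the discrepancy $f=\Delta_g w-s$ is bounded and compactly supported. One then solves a single global Poisson equation $\Delta_{\hat g}v=\lambda^{-1}f$ on the compact surface $(\Sigma,\hat g)$; the solution $v$ is harmonic, hence smooth, near every end (which gives the expansion $\chi=\sigma_i\abs{X}^2+\mathsf{x}$ with $\mathsf{x}$ smooth), but is \emph{not} harmonic on the annulus --- that is precisely where it absorbs the mismatch, and $\chi=w-v$ satisfies $\Delta_g\chi=s$, hence $\abs{\Delta_g\chi}^2=16$, without any ill-posed matching. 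Your sign choice is compatible with this: simply insert one such cutoff near each end with $\sigma_i=-1$. I recommend you replace the partition-plus-harmonic-gluing step by this global Poisson correction.
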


\begin{proof}
Given $u \in J_{\log}$ we set $\alpha= \mathsf{L}(u) \in \R^m$. If $\sum_{i=1}^m \alpha_i \neq 0$ we choose $\sigma_i=1$ for all $i$ and choose $\chi = \abs{X}^2$. \\
Now suppose $\sum_{i=1}^m \alpha_i = 0$. After relabelling the ends we may assume without loss of generality that $\alpha_1 \neq 0$. Setting $\sigma_1 =-1$ and $\sigma_i =1$ for $i>1$ we have that $\sum_{i=1}^m \sigma_i \alpha_i = -2\alpha_1 \neq 0$. 
Fix a non-negative cut-off function $\eta$ on $D_\epsilon(p_1)$ such that $\operatorname{supp}(\eta)\subset D_\epsilon(p_1)$ and $\eta=1$ on $D_{\frac\epsilon2}(p_1)$ where $D_\epsilon(p_1)$ is the previously fixed domain of the conformal chart. We define $w:=\abs{X}^2-2\eta \abs{X}^2$. We have $w=\abs{X}^2$ outside of $D_{\epsilon}(p_1)$ and $w=-\abs{X}^2$ inside of $D_{\frac\epsilon2}(p_1)$. 
Furthermore, we have for some smooth function $r$
\[ \Delta_g w =\begin{cases}
4 &\text{ on } D_\epsilon(p_1)^c \\
r & \text{ on } D_\epsilon(p_1)\setminus D_{\frac\epsilon2}(p_1)\\
-4 &\text{ on } D_\epsilon(p_1)
\end{cases}\,.\]
For $s(x)= 4(1-2\, \mathbf{1}_{D_{\frac{4\epsilon}{3}}(p_1)}(x))$ the function 
\[ f(x):= \Delta_g w(x) - s(x) \]
is bounded and compactly supported in the annulus $A:=D_\epsilon(p_1)\setminus D_{\frac{\epsilon}{2}}(p_1)$; and so is the function $\lambda^{-1} f$ where $\lambda$ is the conformal factor introduced at the beginning of this chapter.
 
By classical $L^2$-theory on compact manifolds and the Lax-Milgram theorem the operator $\Delta_{\hat{g}}= \frac{1}{\lambda}\Delta_g: W^{2,2}(\Sigma, d\mu_{\hat{g}}) \to L^{2}(\Sigma)$ is onto. 
Let $v \in L^2(\Sigma,d\mu_{\hat{g}})$ be the solution of 
\[ \Delta_{\hat{g}} w = \frac{1}{\lambda} f. \]
Since $v$ is harmonic outside of $A$, we have that $v$ is smooth outside the compact set $A$. In particular $v$ is smooth around each end. Hence 
\[ \chi(x):= w(x)-v(x) \]
has the desired properties, where we use Lemma~\ref{lem.sobolev space} to deduce that $\frac{\chi}{\abs{X}^2} \in W^{2,2}(\Sigma, d\mu_{\hat{g}})$.
\end{proof}

Recall that Bryant proved \cite{BryantDuality, Bryant} that all unbranched Willmore spheres in $\R^3$ come from inverted complete minimal surfaces with $m$ embedded planar ends. The easiest case is $m=1$ for the standard round sphere. The next possible number after that is $m=4$ \cite{Bryant}.

\begin{corollary}\label{cor:index > 0 on spheres}
	Let $\Sigma$ be a topological sphere and $m \ge 4$ then 
	\[ \Ind(\Psi)=m-\dim \operatorname{span}\{ n_X(p_i) \colon i=1, \dotsc, m \} . \]
\end{corollary}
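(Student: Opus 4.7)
The plan is to match the upper bound $\Ind(\Psi)\le N:=m-d$ from Corollary~\ref{cor:minusd} (using that $n_X(p_i)=n_\Psi(p_i)$) by exhibiting a $2N$-dimensional subspace of admissible variations on which the index form has inertia $(N,N,0)$. The two ingredients are the logarithmically growing Jacobi fields from Proposition~\ref{prop:space of logarithmic growing Jacobi fields on spheres} and the replacement functions of Lemma~\ref{lem.X replacement function}.

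Fix a basis $u_1,\ldots,u_N$ of $J_{\log}\cup\{0\}$ with $\alpha^{(k)}:=\mathsf{L}(u_k)$ forming a basis of $\ker A\subset\R^m$, where $A\alpha=\sum_i\alpha_in_X(p_i)$. Because $\{-1,+1\}^m$ spans $\R^m$, the restriction $\vec\sigma\mapsto\vec\sigma|_{\ker A}$ sends it onto a spanning subset of $(\ker A)^*\cong\R^N$, and one may therefore pick $\vec\sigma_1,\ldots,\vec\sigma_N\in\{-1,+1\}^m$ so that the $N\times N$ matrix $B_{jk}:=\vec\sigma_j\cdot\alpha^{(k)}$ is invertible. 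The construction in Lemma~\ref{lem.X replacement function} extends verbatim to any prescribed sign pattern $\vec\sigma\in\{-1,+1\}^m$ by replacing the single flipped neighbourhood $D_\epsilon(p_1)$ with the union of the neighbourhoods of those $p_l$ with $\sigma_l=-1$. This yields for each $j$ a function $\chi_j$ with $\chi_j(z)=\sigma_{l,j}|X(z)|^2+\mathsf{x}_{l,j}(z)$ (smooth $\mathsf{x}_{l,j}$) near each end $p_l$, satisfying $|\Delta_g\chi_j|^2\equiv16$ and $\chi_j/|X|^2\in W^{2,2}(\Sigma,d\mu_{\hat g})$.

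Let $V\subset W^{2,2}(\Sigma,d\mu_{\hat g})$ be the $2N$-dimensional span of $\chi_1/|X|^2,\ldots,\chi_N/|X|^2,u_1/|X|^2,\ldots,u_N/|X|^2$. Linear independence is immediate since $u_k/|X|^2\to 0$ at every end while $\chi_j/|X|^2\to\sigma_{l,j}\in\{\pm1\}$ at $p_l$. We compute the restriction of $Q:=\delta^2\W(\Psi)$ to $V$ by polarising Theorem~\ref{thm:2nd var well-def new}. For two log Jacobi fields, $Lu=0$ and all leading coefficients vanish, so $Q(u_j,u_k)=0$. For a mixed pair $(\chi_j,u_k)$ only the coupling between leading and log coefficients survives, giving $Q(\chi_j,u_k)=4\pi\sum_l\sigma_{l,j}\alpha_l^{(k)}=4\pi B_{jk}$. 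For a pair $(\chi_j,\chi_k)$ the divergent bulk term $\int_{\Sigma\setminus\cup D_\epsilon}\tfrac12 L\chi_jL\chi_k\,d\mu_g$ cancels the divergent boundary term $-2\sum_l\sigma_{l,j}\sigma_{l,k}\int_{\partial D_\epsilon(p_l)}\partial_\nu|X|^2$ by exactly the same mechanism as in the Remark following Theorem~\ref{thm:2nd var well-def new}, using $|\Delta_g\chi_j|^2\equiv16$ and $\int_{\partial D_\epsilon(p_l)}\partial_\nu|X|^2=4\pi/\epsilon^2+O(\epsilon)$, with the cross terms $(\Delta_g\chi_j)(|A|^2\chi_k)$ and $|A|^4\chi_j\chi_k$ integrable because $|A|^2\chi\sim|z|^4$ near each end; this leaves a finite symmetric matrix $C_{jk}$.

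Consequently, in the chosen basis, the Gram matrix of $Q|_V$ takes the block form
\[
M=\begin{pmatrix} C & 4\pi B^T\\ 4\pi B & 0\end{pmatrix}.
\]
Since $B$ is invertible, the congruence by $P=\begin{pmatrix}I&-\tfrac{1}{8\pi}CB^{-1}\\0&I\end{pmatrix}$ kills the upper-left block and identifies $M$ with $\begin{pmatrix}0&4\pi B^T\\ 4\pi B&0\end{pmatrix}$, whose eigenvalues are $\pm 4\pi\sigma_i$ (with $\sigma_i$ the singular values of $B$) and whose inertia is therefore $(N,N,0)$. Hence $Q|_V$ admits at least $N$ negative directions, so $\Ind(\Psi)\ge N$, and combined with the upper bound this yields the claimed equality. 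The most delicate points in carrying out this plan will be the cancellation producing a finite $C_{jk}$ and the uniform adaptability of Lemma~\ref{lem.X replacement function} to an arbitrary sign pattern; both reduce to analyses essentially already performed in Section~\ref{section3} and in the proof of Lemma~\ref{lem.X replacement function}.
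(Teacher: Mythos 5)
Your argument is correct, and it reaches the lower bound by a genuinely different final step than the paper. The paper argues by contradiction: assuming $\dim Y<N$ it picks a single $u\in J_{\log}$ with $u/\abs{X}^2\perp Y$, forms the one-parameter family $\chi+tu$ with the single sign vector $\vec\sigma$ from Lemma~\ref{lem.X replacement function}, and plays the affine-in-$t$ expansion of $\delta^2\W$ against the spectral decomposition $Y\oplus Y_0\oplus(Y\oplus Y_0)^\perp$ to produce a contradiction. You instead exhibit the negative subspace directly: you polarize Theorem~\ref{thm:2nd var well-def new} on the $2N$-dimensional span of $N$ replacement functions and $N$ log-Jacobi fields and read off the inertia $(N,N,0)$ from the hyperbolic block structure $\bigl(\begin{smallmatrix} C & 4\pi B^T\\ 4\pi B & 0\end{smallmatrix}\bigr)$. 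The underlying mechanism is identical (the off-diagonal coupling $4\pi\,\vec\sigma\cdot\mathsf{L}(u)$ paired with the vanishing of $Q$ on log-Jacobi fields), but your packaging avoids the contradiction and the $Y,Y_0$ projections entirely, at the cost of three extra points the paper never needs: (i) Lemma~\ref{lem.X replacement function} must be run for arbitrary sign patterns $\vec\sigma\in\{-1,+1\}^m$ rather than the two patterns used there --- this is indeed verbatim, since the construction only flips $\abs{X}^2$ on disjoint end neighbourhoods and corrects by a compactly supported Poisson problem; (ii) the sign vectors must be chosen so that $B_{jk}=\vec\sigma_j\cdot\alpha^{(k)}$ is invertible, which your spanning argument for the surjection $\R^m\to(\ker A)^*$ settles; and (iii) the finiteness of $C_{jk}$, which follows by polarization from the finiteness of $\delta^2\W$ on each $\chi_j/\abs{X}^2$ already guaranteed by Theorem~\ref{thm:2nd var well-def new} (the divergent bulk and boundary contributions carry the same factor $\sigma_{l,j}\sigma_{l,k}$ at each end and cancel exactly as in the diagonal case). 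All three check out, and your congruence (in the convention $PMP^T$) and the singular-value computation of the inertia are correct, so the proof is complete; the paper's route is slightly leaner because it needs only one sign vector adapted to one $u$ at a time, while yours displays the full $N$-dimensional negative cone explicitly.
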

\begin{proof}
Since the upper bound was already proven in Corollary~\ref{cor:minusd}, it remains to show the lower bound. 
Recalling Definition~\ref{def:index} we let 
\[ Y:=\otimes_{\lambda<0} Y_\lambda  \text{ and }Y_0= \ker(Z)\,.\]
Assume by contradiction that $\dim(Y)< N:=m-\operatorname{span}\{ n_X(p_i) \colon i=1, \dotsc, m \}$.
In this case we can find $u \in J_{\log}$ such that $\frac{u}{\abs{X}^2} \in Y^\perp$, the $L^2$ orthogonal of $Y$ with respect to $\hat{g}$. After scaling of $u$ we may assume that $\int_{\Sigma} \frac{\abs{u}^2}{\abs{X}^4} \, d\mu_{\hat{g}} =1$. Let $\chi$ be the associated function constructed in Lemma~\ref{lem.X replacement function}.
We may take an $L^2$-orthogonal decomposition of $\frac{\chi}{\abs{X}^2}$ and $\frac{u}{\abs{X}^2}$ with respect to $Y, Y_0$ and their complement i.e. 
\[ \frac{\chi}{\abs{X}^2}= v + k_0 + r_0 \text{ and } \frac{u}{\abs{X}^2} = k_1 + r_1 \]
with $v \in Y$, $k_0, k_1 \in Y_0$ and $r_0, r_1 \in (Y\oplus Y_0)^\perp$. 
Now let us consider the vector field $w_t:= \chi + t u $ and the associated vectorfield 
\begin{equation}\label{eq:orthogonal decomposition} \psi_t=\frac{w_t}{\abs{X}^2}= v + (k_0+ tk_1) + (r_0+tr_1)=: v+ k_t + r_t.\end{equation}
By construction we have that $k_t \in Y$ and $r_t \in (Y\oplus Y_0)^\perp$ for all $t$. \\
Let us now evaluate the index form $\delta^2\mathcal{W}(\Psi)(\cdot, \cdot)$ in two ways:

First $w_t$ satisfies the assumptions of Theorem~\ref{thm:2nd var well-def new} for each $t\in \R$. Since $(v_1, \dotsc, v_m)=\vec{\sigma}$ and $(\beta_1, \dotsc , \beta_m)=t\,\mathsf{L}(u)$, we deduce that
\begin{align*}
2\delta^2\W(\Psi)(\psi_t,\psi_t)&= \int_{\Sigma\setminus \bigcup_{i=1}^m D_{\epsilon}(p_i)} (Lw_t)^2 - 4 \int_{\bigcup_{i=1}^m \partial D_{\epsilon}(p_i)} \frac{\partial \abs{X}^2}{\partial \nu} +  16\pi t \;\vec{\sigma} \cdot \mathsf{L}(u) + R_\epsilon \\
&= \int_{\Sigma\setminus \bigcup_{i=1}^m D_{\epsilon}(p_i)} (Lw_t)^2 - 4 \Delta_g \abs{X}^2 +  16\pi t\; \vec{\sigma} \cdot \mathsf{L}(u) + R_\epsilon\\
& = \int_{\Sigma\setminus \bigcup_{i=1}^m D_{\epsilon}(p_i)} (Lw_t)^2 - \abs{\Delta_g \chi}^2  +  16\pi t\; \vec{\sigma} \cdot \mathsf{L}(u) + R_\epsilon\\
&=\int_{\Sigma\setminus \bigcup_{i=1}^m D_{\epsilon}(p_i)} (2\Delta_g \chi + \abs{\nabla n_X}^2_g\chi)(\abs{\nabla n_X}^2_g\chi)   + 16\pi t \;\vec{\sigma} \cdot \mathsf{L}(u) + R_\epsilon.
\end{align*}
Since $\abs{\nabla n_X}^2_g=-2K_g$ is decaying sufficiently fast, compare  \eqref{eq:expansion Gauss curvature}, and $\Delta_g\chi$ is bounded on $\Sigma$, we can pass to the limit in $\epsilon$ independent of $t$ i.e. 
\begin{align}\nonumber 2\delta^2\W(\Psi)(\psi_t,\psi_t)&=\int_{\Sigma} (2\Delta_g \chi + \abs{\nabla n_X}^2_g\chi)(\abs{\nabla n_X}^2_g \chi) +16\pi t \;\vec{\sigma} \cdot \mathsf{L}(u)\\&= c_\chi +16 \pi t \;\vec{\sigma} \cdot \mathsf{L}(u), \label{eq:d^2W expansion 1} \end{align}
for some constant $c_\chi$. 

On the other hand we may use the orthogonal decomposition \eqref{eq:orthogonal decomposition} to evaluate $\delta^2\mathcal{W}(\Psi)(\cdot, \cdot)$ as a bilinear form on $W^{2,2}(\Sigma, d\mu_{\hat{g}})$: 
\begin{align}\nonumber
	&\delta^2\W(\Psi)(\psi_t,\psi_t)=\delta^2\W(\Psi)(v+r_t,v+r_t)\\\nonumber &= \delta^2\W(\Psi)(v,v)+ 2 \delta^2\W(\Psi)(v,r_t)+ \delta^2\W(\Psi)(r_t,r_t)\\\nonumber
	&=\delta^2\W(\Psi)(v,v)+ \delta^2\W(\Psi)(r_t,r_t)\\
	&\ge \lambda_{\min} \int_{\Sigma} \abs{v}^2 \, d\mu_{\hat{g}} + \delta^2\W(\Psi)(r_t,r_t)= -c_v + \delta^2\W(\Psi)(r_t,r_t) \label{eq:d^2W expansion 2}
\end{align}
for some constant $c_v>0$. In the estimates we used that $ \delta^2\W(\Psi)(k_t,\cdot)=0$ since $k_t \in Y_0$ and $ \delta^2\W(\Psi)(v,r_t)=\int_{\Sigma} Z(v) r_t \, d\mu_{\hat{g}} = 0$ as $Z(Y) \subset Y$.  
 Combining \eqref{eq:d^2W expansion 1} and \eqref{eq:d^2W expansion 2} we have 
 \[ \delta^2\W(\Psi)(r_t,r_t) \le c_\chi + c_v  +8 \pi t \;\vec{\sigma} \cdot \mathsf{L}(u)\,. \]
Since $\vec{\sigma} \cdot L(u) \neq 0$ for an appropriate choice of $t\in \R$ we get that $ \delta^2\W(\Psi)(r_t,r_t)<0$ for $r_t \in Y^\perp$. This is a contradiction.  
\end{proof}
 As a consequence we have for the Morin surface that $\Ind{\Psi}=1$. We can use it to investigate the symmetry along the associated variation. Hence the following proposition can be seen as an extension of Proposition~\ref{prop:symm}.

\begin{proposition} \label{prop:MorinSymmetry}
Let $\Psi: \Sp^2 \to \R^3$ be a the Morin immersion. There is locally a $C^1$-family of sphere immersions $\Psi_t: \Sp^2 \to \R^3$ such that $\Psi_0=\Psi$ and that locally decrease the Willmore energy and have a two fold symmetry. 
\end{proposition}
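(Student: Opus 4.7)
The plan is to combine Corollary~\ref{cor:index > 0 on spheres} with Corollary~\ref{cor:Morin}: since the Morin surface has four ends with $d=3$, its Willmore Morse Index equals exactly $4-3=1$, so the negative eigenspace $Y^- := \bigoplus_{\lambda<0} Y_\lambda$ of the operator $Z$ from Definition~\ref{def:index} is one-dimensional. Elliptic regularity for the strongly elliptic operator $Z$ ensures a smooth generator of the form $\vec v^- = \psi^- n_\Psi$ with $\psi^-\in C^\infty(\Sp^2)$.

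Next I would exploit the $4$-fold orientation-reversing symmetry. Let $s:\Sp^2\to\Sp^2$ be the diffeomorphism with $S\circ\Psi = \Psi\circ s$, where $S\in SO(3)$ is rotation by $\pi/2$ around $e_3$, and recall from the proof of Proposition~\ref{prop:symm} that $n_\Psi\circ s = -S\, n_\Psi$. Define the involution on scalar normal variations by $(\tau\psi)(p) := -\psi(s(p))$; this is the pullback action induced by the isometry $S$ of $\R^3$, and one checks $\tau^4 = \mathrm{id}$. Since $\mathcal W$ is invariant under the Euclidean motion $S$, the operator $Z$ commutes with $\tau$, so $\tau$ preserves the one-dimensional space $Y^-$. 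Hence $\tau\psi^- = \lambda\psi^-$ with $\lambda^4=1$, and over $\R$ this forces $\lambda\in\{+1,-1\}$.

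Now I would rule out $\lambda=+1$. By the computation in the proof of Proposition~\ref{prop:symm}, a variation $\psi n_\Psi$ preserves the $4$-fold orientation-reversing symmetry iff $\psi\circ s = -\psi$, i.e.\ iff $\tau\psi=\psi$. Thus $\lambda=+1$ would mean that $\psi^-$ itself defines a symmetry-preserving variation, and Corollary~\ref{cor:Morin} would give $\delta^2\mathcal W(\Psi)(\vec v^-,\vec v^-)\ge 0$, contradicting $\psi^-\in Y^-$. Therefore $\lambda = -1$, and consequently $\tau^2\psi^- = \psi^-$, i.e.\ $\psi^-\circ s^2 = \psi^-$.

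Finally I would write down the family $\Psi_t(p) := \Psi(p) + t\,\psi^-(p)\, n_\Psi(p)$. A direct calculation using $\Psi\circ s^2 = S^2\circ\Psi$, $n_\Psi\circ s^2 = S^2\, n_\Psi$ and $\psi^-\circ s^2=\psi^-$ yields $\Psi_t\circ s^2 = S^2\circ\Psi_t$, so each $\Psi_t$ is invariant under the $2$-fold rotation $S^2$ around $e_3$. Because $\Psi$ is an immersion of the compact sphere and $\psi^-$ is smooth, $\Psi_t$ is a $C^1$-family (in fact smooth) of immersions for $|t|$ small. Since $\Psi$ is a Willmore critical point, Taylor expansion gives
\[
 \mathcal W(\Psi_t) = \mathcal W(\Psi) + \tfrac{t^2}{2}\,\delta^2\mathcal W(\Psi)(\vec v^-,\vec v^-) + O(t^3),
\]
and $\delta^2\mathcal W(\Psi)(\vec v^-,\vec v^-)<0$ yields the strict local decrease. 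The only subtle point is the commutation of $\tau$ with $Z$; this follows from the Euclidean (in particular isometric) invariance of $\mathcal W$, which translates directly into the claimed equivariance via the variational characterization in Definition~\ref{def:index}.
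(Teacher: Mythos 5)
Your proposal is correct and follows essentially the same route as the paper: index one gives a one-dimensional negative eigenspace, the equivariance of $Z$ under the orientation-reversing symmetry forces the eigenfunction to transform with a sign $\pm1$, the symmetry-preserving sign is excluded by the stability statement of Corollary~\ref{cor:Morin} (the paper invokes Proposition~\ref{prop:symm} on the family $\Psi_t$ instead, which is the same fact), and the remaining sign yields invariance under $s^2$. The only cosmetic difference is that you package the induced action on scalar variations as the operator $\tau\psi=-\psi\circ s$, whereas the paper tracks the sign directly in the formula for $S(\Psi_t(x))$.
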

\begin{proof}
Recall that combining Corollary~\ref{cor:index > 0 on spheres} with Theorem~\ref{thm:forspheres} we deduce that $\Ind(\Psi)=1$. Hence let $u\in C^\infty(\Sp^2)$ be the function that realizes the negative Index i.e.
\[ Z u = - \lambda_1 u \]
for some $\lambda_1>0$. We may normalize $u$ such that $\int_{\Sp^2} \abs{u}^2 \, d\mu_{\hat{g}} =1$. 
Let $I: \Sp^2 \to \Sp^2$ be the orientation reversing isometry that is induced by the four fold orientation reversing symmetry of the Morin surface i.e. 
\[ S\circ\Psi(x)= \Psi\circ I(x) \text{ and } S(n(x))= - n\circ I(x) \]
where $S\in SO(3)$ that rotation by $\frac{\pi}{2}$.   

 Since $I$ is an isometry and $Z$ is not sensitive on the orientation we have that
\[ Z(u\circ I) = (Zu)\circ I = - \lambda_1 u\circ I \,.\]
Hence $u\circ I$ is as well an eigenfunction to the same eigenvalue. We conclude that $u \circ I = \sigma u$ for some $\sigma \in \R$. Again, since $I$ is an isometry, we have that 
\[ \sigma^2 \int_{\Sp^2} \abs{u}^2 \, d\mu_{\hat{g}} = \int_{\Sp^2} \abs{u\circ I}^2 \, d\mu_{\hat{g}}= \int_{\Sp^2} \abs{u}^2 \, d\mu_{I^\sharp \hat{g}}= \int_{\Sp^2} \abs{u}^2 \, d\mu_{\hat{g}}.  \]
Thus we conclude that $\sigma \in \{+1, -1\}$. 
Consider the family $\Psi_t = \Psi + t \, u n_\Psi$. This family realizes the Index i.e.\ it degreases locally the Willmore energy. Due to Proposition~\ref{prop:symm} we know that $\Psi_t$ can not preserve the four fold symmetry. Nonetheless we have 
\begin{align*}
S(\Psi_t(x))&= S(\Psi(x)) + t u(x) S(n(x))= \Psi(I(x))- t u(x)	n(I(x))\\
&= \Psi(I(x)) - t \sigma u(I(x)) n(I(x))\,.
\end{align*}
Hence we conclude on the one hand that $\sigma = +1$ since otherwise we would have $S(\Psi_t(x))=\Psi_t(I(x))$. On the other hand we have nonetheless $S^2(\Psi_t(x))=\Psi_t(I^2(x))$. This shows the two fold symmetry.
\end{proof}

\appendix
\section{Bounded harmonic functions are constant}
\begin{lemma}\label{lem:bounded harmonic functions are constant}
Let $X: \Sigma \to \R^n$ be a complete minimal immersion  with finite total curvature $K$. If $u$ is bounded harmonic function on $\Sigma$ then $u$ is constant. 	
\end{lemma}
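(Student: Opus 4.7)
The plan is to use the classical structure theorem for complete minimal surfaces with finite total curvature: by the theorem of Huber (and also Osserman in this context), such a $\Sigma$ is conformally equivalent to a compact Riemann surface $\bar{\Sigma}$ with finitely many punctures $\{p_1,\dots,p_k\}$ removed. Since $X$ is an isometric immersion, the induced metric $g=X^\sharp\delta$ is conformal to the smooth metric $\hat g$ inherited from $\bar\Sigma$; and in dimension two the Laplace--Beltrami operator transforms by a positive conformal factor, so a function is $g$-harmonic on $\Sigma$ if and only if it is $\hat g$-harmonic on $\bar\Sigma\setminus\{p_1,\dots,p_k\}$.

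Hence $u$ is a bounded harmonic function on $\bar{\Sigma}\setminus\{p_1,\dots,p_k\}$ with respect to the smooth background metric $\hat g$ on the compact surface $\bar{\Sigma}$. Around each puncture $p_i$, fix a conformal disk chart; in such a chart $u$ is a bounded harmonic function on the punctured disk, and by the standard removable singularity theorem for harmonic functions in two dimensions (bounded harmonic functions on a punctured disk extend smoothly across the puncture) $u$ extends harmonically across $p_i$.

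Therefore $u$ extends to a harmonic function on the compact manifold $\bar{\Sigma}$ (without boundary). By the maximum principle, or equivalently by integrating $\Delta u = 0$ against $u$ and obtaining $\int_{\bar\Sigma}|\nabla u|^2\,d\mu_{\hat g}=0$, we conclude that $u$ is constant.

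The only nonroutine ingredient is the appeal to Huber's theorem to compactify $\Sigma$ conformally; everything after that is the standard removable singularity statement plus the maximum principle. Note that the completeness of $X$ and the finite total curvature hypothesis are both essential exactly at this step, to ensure that $\Sigma$ admits a conformal compactification by finitely many points.
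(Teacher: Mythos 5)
Your proof is correct and follows essentially the same route as the paper's: conformal compactification of $\Sigma$ by finitely many points, conformal invariance of harmonicity in two dimensions, the removable singularity theorem for bounded harmonic functions on the punctured disk, and the fact that harmonic functions on a closed surface are constant. The paper merely makes the conformal factor explicit via the Weierstrass expansion of $X_z$ at each end rather than citing Huber's theorem by name.
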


\begin{proof}
The statement follows from a point singularity removability theorem for harmonic functions. 
Recall we can choose a local complex coordinate on a neighbourhood $D(p_i)$ around and end $p_i$ in $\Sigma$ such that $z(p_i)=0$. With respect to this local coordinates, we have 
\[ X_z(z) = - \frac{a_i}{z^{m+1}} + Y(z) \]
where the multiplicity of the end $m\ge 1$, because $X$ is complete, and $\abs{z^{m+1} Y(z)} \to 0$ as $z \to 0$. In particular we have 
\[ \abs{X_z}^2 = \abs{z}^{-2m-2} (\abs{a_i}^2 + b_i(z)) \]
with $b_i(z)=O(z)$ smooth. Without loss of generality we may even assume that $\abs{a_i}^2=2$. 
To ``compactify'' the Laplacian on $\Sigma$ we fix a function $\lambda \in C^\infty(\Sigma)$ strictly positive such that $\lambda(z) = \frac{1}{2} \left( \abs{z}^{-2m-2} (\abs{a_i}^2 + b_i(z)) \right)$ around each end $p_i$. The metric $\overline{g}:= \frac{1}{\lambda} g$ is smooth and compatible with the complex structure on $\Sigma$ determined by the pullback metric $g = X^\#\delta_{\R^n}$.  Hence we have $\Delta_{\overline{g}}= \lambda \Delta_{g}$. Furthermore $(\Sigma, \overline{g})$ is now compact. 
Recall that $u$ is harmonic hence in the local coordinates around the end $p_i$ we have that
\[ \Delta u(z) =0  \text{ on } B_{\epsilon} \setminus \{0\}. \]
Since $u$ is bounded, $u$ is harmonic across $0$ and we have 
\[ \Delta_{\overline{g}} u = 0 \text{ on } \Sigma . \]
But every harmonic function on a compact manifold is constant.  
\end{proof}

\bibliographystyle{plain}
\bibliography{Lit_2}

\end{document}